\newcommand{\cB}{{\mathcal B}}
\newcommand{\cH}{{\mathcal H}}
\newcommand{\cK}{{\mathcal K}}
\newcommand{\cT}{{\mathcal T}}
\newtheorem{thm}{Theorem}[section]
\newtheorem{corollary}[thm]{Corollary}
\newtheorem{lemma}[thm]{Lemma}
\newtheorem{proposition}[thm]{Proposition}
\theoremstyle{definition}
\newtheorem{definition}[thm]{Definition}
\newtheorem{remark}[thm]{Remark}
\newtheorem{example}[thm]{Example}
\newtheorem{notation}[thm]{Notation}
\numberwithin{equation}{section}
\def\textmatrix#1&#2\\#3&#4\\{\bigl({#1 \atop #3}\ {#2 \atop #4}\bigr)}
\def\dispmatrix#1&#2\\#3&#4\\{\left({#1 \atop #3}\ {#2 \atop #4}\right)}
\numberwithin{equation}{section}
\def\textmatrix#1&#2\\#3&#4\\{\bigl({#1 \atop #3}\ {#2 \atop #4}\bigr)}
\def\dispmatrix#1&#2\\#3&#4\\{\left({#1 \atop #3}\ {#2 \atop #4}\right)}
\def\pbar{{\overline{p}}}
\begin{document}
\title[Toeplitz operators]{Algebraic Properties of Toeplitz operators on the symmetrized polydisk}
\author[B. Krishna Das]{B. Krishna Das}
\address{Department of Mathematics, Indian Institute of Technology Bombay, Powai, Mumbai, 400076, India.\\ dasb@math.iitb.ac.in, bata436@gmail.com}
\author[H. Sau]{Haripada Sau}
\address{Department of Mathematics, Indian Institute of Science Education and Research, Dr Homi Bhabha Rd, Pashan, Pune, Maharashtra, 411008, India.\\hsau@iiserpune.ac.in, haripadasau215@gmail.com}
\subjclass[2010]{47A13, 47A20, 47B35, 47B38, 46E20, 30H10}
\keywords{Symmetrized Polydisk, Toeplitz Operator, Commutant Lifting Theorem}

\begin{abstract}
This paper is an effort to continue the legacy of the classically successful theory of Toeplitz operators on the Hardy space over the unit disk to a new domain in $\mathbb C^d$ -- the symmetrized polydisk. We obtain algebraic characterizations of Toeplitz operators, analytic Toeplitz operators, compact perturbation of Toeplitz operators and dual Toeplitz operators. We then revisit the operator theory of this domain considered first in \cite{SS}, to study the generalized Toeplitz operators and find a commutant lifting type result. This is a continuation of a previous work done in \cite{B-D-S}.
\end{abstract}
\maketitle

\section{Introduction}

Let $\mathbb D$ be the open unit disk and $\mathbb T$ be the unit circle in the complex plane $\mathbb C$, while $\mathbb D^d$ and $\mathbb T^d$ denote the corresponding cartesian products. The Hardy space over the unit disk, denoted by $H^2$, is the Hilbert space of functions that are analytic in $\mathbb D$ and the co-efficients of the Taylor series around the origin are square summable (making it naturally isomorphic to $\ell^2$, the space of square summable sequences in $\mathbb C$). A function $f$ analytic in $\mathbb D$ is in $H^2$ if and only if
\begin{align*}
\|f\|_{H^2}^2=\sup_{0<r<1}\int_{0}^{2\pi}|f(re^{i\theta})|^2d\theta <\infty.
\end{align*}Multiplication by the coordinate function `$z$', denoted by $T_z:f\mapsto zf(z)$, is the first example of a bounded operator on $H^2$ that comes to one's mind. This is an example of a fairly large class of operators that has remained a field of extensive research and development since mid 20th century, viz., the Toeplitz operators. For a {\em symbol} $\varphi$ in $L^\infty$, the algebra of essentially bounded function on $\mathbb T$ (the measures in this paper will be the Lebesgue measure unless otherwise specified), the {\em Toeplitz operator} $T_\varphi$ is defined as $T_{\varphi}f (z)=P_{H^2}\varphi(z)f(z)$, where $P_{H^2}$ denotes the orthogonal projection from $L^2$, the Hilbert space of square integrable measurable functions on $\mathbb T$, onto $H^2$. Toeplitz operators often serve as the elementary building blocks of the study of much complicated operators. Ever since its invention, the Toeplitz operators have been studied in many other Hilbert function spaces on several domains in $\mathbb C^d$, notably the weighted Bergman spaces \cite{AC,ACM,Jewell,SZ1,Vasi} and the $H^p$ spaces, $1\leq p < \infty$ \cite{BH,Vuko}; also see \cite{BS} for a detailed discussion.

The goal of this paper is to extend the theory of Toeplitz operators in the Hardy space setting to another domain in $\mathbb C^d$, $d\geq2$, viz., the symmetrized polydisk. This polynomially convex but not convex domain has attracted a considerable amount of attention for its complex geometry \cite{GS, EZ, NPTZ, NPZ, Niko}, function theory \cite{MSRZ, NTT, TTZ}, and operator theory \cite{SS}. To describe the domain, we consider the {\em{elementary symmetric functions}} $s_i:\mathbb C^d\to\mathbb C$ of degree $i$ in $d$ variables defined as
\begin{align}\label{symm functions}
s_0:=1 \text{ and }s_i(\bm z):=\sum_{1\leq k_1<k_2<\cdots<k_i\leq d}z_{k_1}z_{k_2}\cdots z_{k_i} \text{ for all $\bm z \in \mathbb C^d$ and $1\leq i\leq d$}
\end{align}
and the {\em{symmetrization map}} $\bm s:\mathbb C^d\to\mathbb C^d$ is defined by
\begin{eqnarray}\label{analogue-pi}
\bm s(\bm z):=(s_1(\bm z),s_2(\bm z),\dots,s_d(\bm z))\text{ for all $\bm z \in \mathbb C^d$}.
\end{eqnarray}
The {\em{symmetrized polydisk}}, denoted by $\mathbb G_d$, is defined to be the image of the polydisk $\mathbb D^d$ under the symmetrization map, i.e.,  $\mathbb G_d:=\bm s(\mathbb D^d)$. A typical element of $\mathbb G_d$ will be denoted by $(s_1,s_2,\dots,s_{d-1},p)$. Following Ogle \cite{DJO}, the closure of $\mathbb G_d$ will be denoted by $\Gamma_d:=\bm s(\overline{\mathbb D}^d)$. The symmetrized polydisk came as a natural multivariable generalization of the symmetrized bidisk $\mathbb G_2=\{(z_1+z_2,z_1z_2):(z_1,z_2)\in\mathbb D^2\}$, which, with original motivation coming from the problem of $\mu$-synthesis in robust control (\cite{DP}), has proved to possess rich holomorphic function theory, complex geometry and operator theory; see \cite{ALY-MAMS19, AY_JGA04, AY-JOT03, BS-JFA18, TRY, CYU} and references therein. The algebraic properties of Toeplitz operators on the symmetrized bidisk are studied in \cite{B-D-S}.

As has always been the case in the history of extension of classical theory to multivariable setting, adopting the Toeplitz operators to the symmetrized polydisk has challenges. The main challenge is the absence of a matrix structure of a Toeplitz operator unlike the classical case. We now lay out the main results of this paper in the order they are proved.
\begin{enumerate}
\item In \S \ref{S:HardyG}, we introduce the Hardy space of the symmetrized polydisk $H^2(\mathbb G_d)$ and exhibit two isomorphic copies of it. These isomorphic copies of the Hardy space are used to decipher algebraic properties of the Toeplitz operators.
\item Note that the Toeplitz operators on $H^2$ are precisely those that satisfy $T_z^*TT_z=T$. This is due to Brown and Halmos \cite{BH}. We show that {\em a bounded operator on $H^2(\mathbb G_d)$ is a Toeplitz operator if and only if
    \begin{align}\label{BHnew}
    T_{s_i}^*TT_p=TT_{s_{d-i}} \text{ for each } 1\leq i \leq d-1 \text{ and }T_p^*TT_p=T,
    \end{align}
where $(T_{s_1},T_{s_2},\dots,T_{s_{d-1}},T_p)$ on $H^2(\mathbb G_d)$ is the tuple of the multiplication operator by the coordinate functions.} See Theorem \ref{T:BH} for a proof of the Brown--Halmos type characterization. While the part that proves necessity is somewhat obvious, the proof of the sufficiency part needs substantial analysis. We shall explain in \S \ref{S:Gen} that the role of the tuple $(T_{s_1},T_{s_2},\dots,T_{s_{d-1}},T_p)$ is a direct analogue of that of $T_z$ on $H^2$.

\item Perhaps the simplest class of Toeplitz operators are the analytic Toeplitz operators. The analytic Toeplitz operators are those that come from an analytic symbol $\varphi$, i.e., $\varphi$ has no negative Fourier coefficient in $L^2$.  Several characterizations of analytic Toeplitz operators on $H^2(\mathbb G_d)$ are given in Theorem \ref{T:AnaToep}.

\item Compact operators on $H^2(\mathbb G_d)$ are characterized in terms of certain natural shifts. We then use this to study compact perturbation of Toeplitz operators -- the so called asymptotic Toeplitz operators. This is the content of \S \ref{S:asympT}

\item In \S \ref{S:Gen}, we consider an analogue of a single isometric operator in the symmetrized polydisk case, called a $\Gamma_d$-isometry. These are essentially those tuples $\underline{S}=(S_1,S_2,\dots,S_{d-1},P)$ of commuting bounded Hilbert space operators that have an extension to what is called a $\Gamma_d$-unitary, that is a tuple $\underline{R}=(R_1,R_2,\dots,R_{d-1},U)$ of commuting normal operators with the Taylor joint spectrum lying in the distinguished boundary of $\Gamma_d$. It turns out that $(T_{s_1},T_{s_2},\dots,T_{s_{d-1}},T_p)$ is a prototype example of a $\Gamma_d$-isometry. This motivates us to study those bounded operators that satisfy the Brown--Halmos type relations \eqref{BHnew} with respect to a general $\Gamma_d$-isometry $\underline{S}$ instead of the particular one $(T_{s_1},T_{s_2},\dots,T_{s_{d-1}},T_p)$. We call these the $\underline{S}$-Toeplitz operators or the generalized Toeplitz operators.

   For a symbol $\varphi \in L^\infty$, let $M_\varphi$ denote the multiplication operator on $L^2$. Apart from the concrete algebraic characterization by Brown and Halmos, another way of viewing the Toeplitz operators on $H^2$ is that a Toeplitz operator is precisely the compression of a commutant of $M_{\zeta}$ on $L^2$ with the same norm. We observe a similar phenomenon in the case of an $\underline{S}$-Toeplitz operator. Indeed, given a $\Gamma_d$-isometry $\underline{S}$ on $\cH$, we construct a special minimal $\Gamma_d$-unitary extension $\underline{R}$ on $\cK$ of $\underline{S}$ such that $X$ is an $\underline{S}$-Toeplitz operator if and only if there is a $Y$ in the commutant of $\underline{R}$ whose compression to $\cH$ is $X$ and $\|Y\|=\|X\|$, see part (2) of Theorem \ref{the-iso-case}.

\item For an $L^\infty$ function $\varphi$, the multiplication operator $M_\varphi$ on $L^2$ has the following $2\times2$ matrix form with respect to the decomposition $L^2=H^2\oplus {H^2}^\perp$:
    \begin{align}
    \begin{bmatrix}
                   T_\varphi & H_{\overline{\varphi}}^* \\
                   H_\varphi & DT_\varphi \\
      \end{bmatrix},
      \end{align}where $H_\varphi$ is the Hankel operator and $DT_{\varphi}$ is called the dual Toeplitz operator associated to $\varphi$. We characterize the dual Toeplitz operators on $H^2(\mathbb G_d)$ in \S \ref{S:Dual}.
\end{enumerate}

Finally, the authors would like to express their sincere gratitude to Professor Tirthankar Bhattacharyya for introducing them to this flourishing area of operator theory.

\section{The Hardy space}\label{S:HardyG}

\subsection{Basics of the Hardy Space}
 Let $\bm s$ be the symmetrization map as defined in \eqref{analogue-pi} and $J_{\bm s}$ denote its complex Jacobian.  For $\bm \theta = (\theta_1, \theta_2, \ldots , \theta_d)$ with $\theta_i \in [0,2\pi)$ and $0<r\le1$, let  $r e^{i\bm \theta}$ denote the $d$-tori element $(\,re^{i\theta_1}, \,re^{i\theta_2},\dots,\,re^{i\theta_d})$, while $d\bm\theta$ denotes the normalized product measure on $\mathbb T^d$.
\begin{definition}\label{GnHardy}
 The {\em Hardy space of the symmetrized polydisk}, denoted by  $H^2(\mathbb G_d)$, is the vector space of those holomorphic functions $f$ on $\mathbb G_d$ which satisfy
\[\sup_{\,0<r<1}\int_{\mathbb T^d} |f\circ \bm s (r e^{i\bm \theta})|^2
|J_{\bm s}(r e^{i\bm \theta})|^2 d\bm \theta <\infty .\]

The norm of $f\in H^2(\mathbb G_d)$ is defined to be
$$\|f\|_{H^2(\mathbb G_d)}=\|J_{\bm s}\|^{-1}\Big \{\sup_{0<r<1}\int_{\mathbb
T^d}|f\circ \bm s (r e^{i\bm \theta})|^2
|J_{\bm s}(r e^{i\bm \theta})|^2 d\bm \theta \Big \}^{1/2},$$
where $\|J_{\bm s}\|^2={{\int_{\mathbb T^d}}} |J_{\bm s}(e^{i\bm \theta})|^2 d\bm \theta.$
\end{definition}
We divide by $\|J_{\bm s}\|$ in the definition of the norm in $H^2(\mathbb G_d)$ to make sure that the norm of the constant function $\bm 1$ (sending every element of $\mathbb G_d$ to the constant $1$) is $1$.

The space $H^2(\mathbb G_d)$ as defined above is actually a reproducing kernel Hilbert space and its reproducing kernel was explicitly computed in \cite[Theorem 3.1]{MSRZ}.

The {\em distinguished boundary} of a compact set $K$ in $\mathbb C^d$ is the $\check{\text{S}}$ilov boundary with respect to the algebra $\mathcal A(K)$ of all complex-valued functions continuous on $K$ and holomorphic in the interior of $K$; see Chapter 9 of \cite{AW} for a detailed discussion regarding $\check{\text{S}}$ilov boundary. The distinguished boundary of the symmetrized polydisk, denote by $b\Gamma_d$ is just the symmetrization of the $d$-tori $\mathbb T^d$, i.e., $b\Gamma_d = \bm s(\mathbb T^d).$  See \cite[Theorem 2.4]{SS} for characterizations of $b\Gamma_d$.

Consider the measure $\mu$ defined on a Borel subset $E$ of $\mathbb T^d$ induced by the Jacobian $J_{\bm s}$ of the symmetrization map $\bm s$ as
$$
\mu(E)=\int_{E}|J_{\bm s}(e^{i\bm \theta})|^2 d\bm\theta.
$$Let us denote by $\nu$ the push forward measure on $b\Gamma_d$ of $\mu$ and consider the $L^2$ space of $b\Gamma_d$ with respect to it, i.e.,
$$
L^2(b\Gamma_d):=L^2(b\Gamma_d,\nu)=\{f:b\Gamma_d \to \mathbb C: \int_{\mathbb T^d} |f\circ \bm s (e^{i\bm \theta})|^2
|J_{\bm s}(e^{i\bm \theta})|^2 d\bm\theta<\infty  \}.
$$
Just like in the case of the classical Hardy space, Theorem \ref{Iso1} below shows that $H^2(\mathbb G_d)$ is a subspace of $L^2(b\Gamma_d)$ with respect to a natural identification.

We shall first set up some standard notations which will be used in what follows.
\begin{itemize}

\item A {\em{partition}} $\bm p$ of $\mathbb Z$ of size $d$ is a $d$-tuple $(p_1,p_2,\dots,p_d)$ of integers such that $p_1> p_2>\cdots> p_d$.
Let
 $$\llbracket z \rrbracket:=\{(p_1,p_2,\dots,p_d): p_1> p_2>\cdots> p_d \text{ and } p_j \in \mathbb Z \}$$ and
 $$\llbracket n \rrbracket:=\{(p_1,p_2,\dots,p_d): p_1> p_2>\cdots> p_d \geq 0  \};$$
\item $\Sigma_d$ denotes the permutation group of the set $\{1,2,\dots,d\}$.
\item For $\sigma\in\Sigma_d$, $\bm m=(m_1,m_2,\dots, m_d) \in \llbracket z \rrbracket$ and $\bm z=(z_1,z_2,\dots z_d) \in \mathbb C^d$, let $\bm z_\sigma:=(z_{\sigma(1)},z_{\sigma(2)},\dots z_{\sigma(d)})$, $\bm {z^m}:=z_1^{m_1}z_2^{m_2}\cdots z_d^{m_d}$.
\end{itemize}
Therefore for a partition $\bm m$ in $\llbracket z \rrbracket$, $\sigma$ in $\Sigma_n$ and $\bm z=(z_1,z_2,\dots z_d)$ in $\mathbb C^d$,
 $$
 \bm z^{\bm m_\sigma}=z_1^{m_{\sigma(1)}}z_2^{m_{\sigma(2)}}\cdots z_d^{m_{\sigma(d)}}.
 $$
For  $\bm m\in \llbracket n \rrbracket$, consider the function $\bm{a_m(z)}$ defined by
   $$
   \bm{a_m(z)}:=\sum_{\sigma\in\Sigma_d}\text{sgn}(\sigma)\bm z^{\bm m_\sigma}=\det\left(((z_i^{m_j}))_{i,j=1}^d\right),
   $$where $\text{sgn}(\sigma)$ denotes the sign of the permutation $\sigma$. The above representation of $\bm{a_m}$ shows that it is zero if and only if there exist $1\leq i< j \leq d$ such that $m_i=m_j$. Note that the functions $\bm{a_m}$ continue to be defined on $\mathbb T^d$ for $\bm m\in \llbracket z \rrbracket$ and that $\bm{a_m}(\bm z_\sigma)=\operatorname{sgn}(\sigma)\bm{a_m}(\bm z)$ for every $\sigma$ in $\Sigma_n$. This shows that each $\bm{a_m}$ is a member of the following subspace of $L^2(\mathbb T^d)$ consisting of anti-symmetric functions
$$L^2_{\rm anti} (\mathbb T^d):=\{f\in L^2(\mathbb T^d):f( e^{i\bm \theta_\sigma})=\text{sgn}(\sigma)f(e^{i\bm \theta})\text{ a.e. w.r.t. the product measure}\}.$$ Moreover, the set
$$
 \{\bm{a_m}:\bm m \in \llbracket z \rrbracket\}
 $$ actually forms an orthogonal basis for $L^2_{\rm anti} (\mathbb T^d)$, which follows from the facts that the operator $\mathbb P:L^2(\mathbb T^d)\to L^2_{\text{anti}}(\mathbb T^d)$ defined by
$$
\mathbb P(f)(e^{i\bm \theta})=\frac{1}{d!}\sum_{\sigma\in \Sigma_d} \text{sgn}(\sigma)f(e^{i\bm \theta_\sigma})
$$ is an orthogonal projection onto $L^2_{\text{anti}}(\mathbb T^d)$ and that the set $\{e^{i\bm m\bm \cdot\bm\theta}:\bm m\in \mathbb Z^n\}$ is an orthonormal basis for $L^2(\mathbb T^d)$. Here $\bm m\cdot\bm\theta:=(m_1\theta_1,m_2\theta_2,\dots,m_d\theta_d)$ for $\bm m=(m_1,m_2,\dots,m_d)$ and $\bm\theta=(\theta_1,\theta_2,\dots,\theta_d)$.

Consider the following subspace of the Hardy space $H^2(\mathbb D^d)$ over the $d$-disk
$$H^2_{\rm anti} (\mathbb D^d) := \{ f \in H^2(\mathbb D^d) : f(\bm {z}_\sigma) = \text{sgn}(\sigma) f(\bm z), \text{ for all } \bm z\in \mathbb D^d \text{ and }\sigma\in\Sigma_d \}.$$ It was observed in \cite[Sec.\ 3]{MSRZ} that the set
 $$
 \mathcal B=\{\bm{a_p}:\bm p \in \llbracket n \rrbracket\}
 $$is an orthogonal basis for $H^2_{\rm anti} (\mathbb D^d)$. The following theorem immediately allows us to consider boundary values of the Hardy space functions.

\begin{thm} \label{Iso1}
There is an isometric embedding of the space $H^2(\mathbb G_d)$ inside $L^2(b\Gamma_d)$.
\end{thm}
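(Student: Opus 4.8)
\emph{The plan} is to realize the embedding as a composition of three maps, using multiplication by the Jacobian $J_{\bm s}$ as the bridge between ``symmetric objects upstairs on $\mathbb D^d$'' and ``objects downstairs on $\mathbb G_d$''. Throughout one keeps in mind that $J_{\bm s}$ is, up to a nonzero constant, the Vandermonde polynomial $\prod_{1\le i<j\le d}(z_i-z_j)$; in particular it is a $\Sigma_d$-anti-symmetric polynomial which vanishes only on the union of the diagonals $\{z_i=z_j\}$, a $d\bm\theta$-null subset of $\mathbb T^d$.

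\emph{Step 1 (lift to the polydisk).} Given $f\in H^2(\mathbb G_d)$, set $F:=(f\circ\bm s)\cdot J_{\bm s}$. Since $\bm s:\mathbb D^d\to\mathbb G_d$ is holomorphic, $f\circ\bm s$ is holomorphic and $\Sigma_d$-symmetric on $\mathbb D^d$, so $F$ is holomorphic and anti-symmetric; moreover the defining supremum for membership of $F$ in $H^2(\mathbb D^d)$ equals $\sup_{0<r<1}\int_{\mathbb T^d}|f\circ\bm s(re^{i\bm\theta})|^2|J_{\bm s}(re^{i\bm\theta})|^2\,d\bm\theta=\|J_{\bm s}\|^2\,\|f\|_{H^2(\mathbb G_d)}^2<\infty$. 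Hence $F\in H^2_{\rm anti}(\mathbb D^d)$ and $f\mapsto \|J_{\bm s}\|^{-1}F$ is a linear isometry of $H^2(\mathbb G_d)$ into $H^2_{\rm anti}(\mathbb D^d)$. (If one wants this map to be onto $H^2_{\rm anti}(\mathbb D^d)$, note that anti-symmetry of a holomorphic $G$ forces $G$ to vanish on each divisor $\{z_i=z_j\}$, so $\prod_{i<j}(z_i-z_j)$ divides $G$; writing $G=J_{\bm s}\cdot h$ with $h$ holomorphic and symmetric, and using that a symmetric holomorphic function on $\mathbb D^d$ descends through $\bm s$ to a holomorphic function on $\mathbb G_d$, recovers the preimage of $G$.)

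\emph{Step 2 (boundary values).} Passing to radial boundary limits embeds $H^2(\mathbb D^d)$ isometrically into $L^2(\mathbb T^d)$ by the classical Hardy-space theory on the polydisk, and since this map intertwines the coordinate-permutation actions it restricts to an isometric embedding of $H^2_{\rm anti}(\mathbb D^d)$ into $L^2_{\rm anti}(\mathbb T^d)$; equivalently, on the level of orthogonal bases it carries $\mathcal B=\{\bm{a_p}:\bm p\in\llbracket n\rrbracket\}$ into $\{\bm{a_m}:\bm m\in\llbracket z\rrbracket\}$ with matching norms. \emph{Step 3 (descend to $b\Gamma_d$).} The map $g\mapsto (g\circ\bm s)\cdot J_{\bm s}$ is a unitary from $L^2(b\Gamma_d)$ onto $L^2_{\rm anti}(\mathbb T^d)$: it is isometric because, by the definition of the push-forward measure $\nu$, $\|(g\circ\bm s)J_{\bm s}\|_{L^2(\mathbb T^d)}^2=\int_{\mathbb T^d}|g\circ\bm s|^2|J_{\bm s}|^2\,d\bm\theta=\int_{b\Gamma_d}|g|^2\,d\nu$; and it is onto because, $J_{\bm s}$ being nonzero a.e., every anti-symmetric $\Phi\in L^2(\mathbb T^d)$ yields the $\Sigma_d$-invariant a.e.\ function $\Phi/J_{\bm s}$, which (the fibres of $\bm s:\mathbb T^d\to b\Gamma_d$ being exactly the $\Sigma_d$-orbits) factors as $g\circ\bm s$ for a Borel $g$ on $b\Gamma_d$. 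Composing the isometries of Steps 1--2 with the inverse of this unitary produces the asserted isometric embedding $H^2(\mathbb G_d)\hookrightarrow L^2(b\Gamma_d)$, explicitly $f\mapsto \|J_{\bm s}\|^{-1}\big(\widetilde F/J_{\bm s}\big)\circ\bm s^{-1}$ where $\widetilde F\in L^2_{\rm anti}(\mathbb T^d)$ is the boundary value of $(f\circ\bm s)J_{\bm s}$.

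The step I expect to need the most care is making the ``divide by $J_{\bm s}$ and descend'' manoeuvre fully rigorous at the measure-theoretic level: one must verify that $J_{\bm s}$ vanishes only on a null set so that the quotients are defined a.e., and that a $\Sigma_d$-invariant $L^2$-class on $\mathbb T^d$ genuinely factors as $g\circ\bm s$ for a Borel $g$ on $b\Gamma_d$, which uses that $\bm s:\mathbb T^d\to b\Gamma_d$ is a finite-to-one surjective Borel map with the $\Sigma_d$-orbits as fibres together with a measurable selection. The holomorphic analogue in Step 1 (divisibility by the Vandermonde and descent of symmetric holomorphic functions through $\bm s$) is the other place where a short argument rather than a one-line citation is warranted; everything else is bookkeeping with the weight $|J_{\bm s}|^2$.
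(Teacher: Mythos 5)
Your proposal is correct and follows essentially the same route as the paper: multiplication by $J_{\bm s}$ (suitably normalized) gives the unitary $U_h$ onto $H^2_{\rm anti}(\mathbb D^d)$, radial boundary values give the isometry $W$ into $L^2_{\rm anti}(\mathbb T^d)$, and the inverse of the Jacobian-twisted unitary from $L^2(b\Gamma_d)$ onto $L^2_{\rm anti}(\mathbb T^d)$ brings you back, exactly the paper's composition $U_l^{-1}\circ W\circ U_h$. The extra details you supply (Vandermonde divisibility, descent of $\Sigma_d$-invariant functions through $\bm s$) just flesh out the surjectivity claims the paper states without proof, and your placement of the constant $\|J_{\bm s}\|^{-1}$ differs from the paper's only by the normalization convention chosen for the norm on $L^2(b\Gamma_d)$.
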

\begin{proof}
Define $U_h: H^2(\mathbb G_d) \to H^2_{\rm anti} (\mathbb D^d)$ by
\begin{eqnarray}\label{unotilde}
U_h (f) = \frac{1}{\| J_{\bm s} \|} J_{\bm s}(\cdot)f \circ \bm s(\cdot), \text{ for all } f \in H^2(\mathbb G_d)
\end{eqnarray}
and ${U_l}:L^2(b\Gamma_d) \to L^2_{\rm anti} (\mathbb D^d)$ by
\begin{eqnarray}\label{utilde}
 U_l f=\frac{1}{\|J_{\bm s}\|} J_{\bm s}(\cdot)f \circ \bm s(\cdot), \text{ for all } f \in L^2(b\Gamma_d).
\end{eqnarray}
It follows from the definitions of the norms in the respective spaces that both $U_l$ and $U_h $ are isometries. Moreover, since they are easily seen to be surjective, they are unitary. Also note that there is an isometry $W: H^2_{\rm anti} (\mathbb D^d) \to L^2_{\rm anti}(\mathbb T^d)$ which sends a function to its radial limit. Therefore we have the following commutative diagram:
$$
\begin{CD}
H^2(\mathbb G_d) @> U_l^{-1}\circ W \circ U_h >>  L^2(b\Gamma_d)\\
@VU_h  VV @VV U_l V\\
H^2_{\rm anti} (\mathbb D^d)@>>W> L^2_{\rm anti}(\mathbb T^d)
\end{CD}.
$$Consequently, the embedding of $H^2(\mathbb G_d)$ into $L^2(b\Gamma_d)$ is carried out by the isometry $U_l^{-1}\circ W \circ U_h$.
\end{proof}
Let $L^\infty(b\Gamma_d)$ denote the algebra of all functions on $b\Gamma_d$ which are bounded almost everywhere on $b\Gamma_d$ with respect to the measure $\nu$.
For a function $\varphi$ in $L^\infty(b\Gamma_d)$, let $M_\varphi$ be the operator on $L^2(b \Gamma_d)$ defined by
$$
M_\varphi f (s_1,\dots,s_{d-1},p) = \varphi(s_1,\dots,s_{d-1},p)f(s_1,\dots,s_{d-1},p),
$$ for all $f$ in $L^2(b \Gamma_d)$.
We note that the tuple $(M_{s_1},\dots,M_{s_{d-1}},M_p)$ of multiplication by coordinate functions on $L^2(b \Gamma_d)$ is a commuting tuple of normal operators.
\begin{definition}
For a function $\varphi$ in $L^\infty(b\Gamma_d)$, the multiplication operator $M_\varphi$ is called the {\em Laurent operator} with symbol $\varphi$. The compression of $M_\varphi$ to $H^2(\mathbb G_d)$ is called the {\em Toeplitz operator} and denoted by $T_\varphi$. Therefore
$$
T_\varphi f = Pr M_\varphi f \;\text{  for all $f$ in $H^2( \mathbb G_d)$,}
$$where $Pr$ denotes the orthogonal projection of $L^2(b\Gamma_d)$ onto $H^2(\mathbb G_d)$.
\end{definition}

Let $L^\infty_{\text{sym}}(\mathbb T^d)$ denote the sub-algebra of $L^\infty(\mathbb T^d)$ consisting of symmetric functions, i,e., $f(e^{i\bm{\theta}_\sigma})=f(e^{i\bm{\theta}})$ a.e., and $\Pi_{\bm s}:L^\infty(b\Gamma_d)\to L^\infty_{\text{sym}}(\mathbb T^d)$ be the $*$-isomorphism defined by
$$
\varphi \mapsto \varphi \circ \bm s
$$
where $\bm s$ is the symmetrization map as defined in (\ref{analogue-pi}). Induced by the unitary ${U_l}$ as in \eqref{utilde}, let $\Pi_l:\mathcal B(L^2(b\Gamma_d))\to \mathcal B(L^2_{\text{anti}}(\mathbb T^d))$ be the map $$\Pi_l:X\mapsto {U_l} X {U_l}^*.$$
\begin{thm}\label{equiv-Laurent}
Let $\Pi_1$ and $\Pi_2$ be the above $*$-isomorphisms. Then the following diagram is commutative:
$$
\begin{CD}
L^\infty(b\Gamma_d) @> \Pi_{\bm s}>>  L^\infty_{\operatorname{sym}}(\mathbb T^d)\\
@Vi_1 VV @VVi_2 V\\
\mathcal B(L^2(b\Gamma_d))@>>\Pi_l> B(L^2_{\operatorname{anti}}(\mathbb T^d))
\end{CD},
$$where $i_1$ and $i_2$ are the canonical inclusion maps. Equivalently, for $\varphi \in L^\infty(b\Gamma_d)$, the operators $M_\varphi$ on $L^2(b\Gamma_d)$ and $M_{\varphi\circ \bm s}$ on $L^2_{\operatorname{anti}}(\mathbb T^d)$ are unitarily equivalent via the unitary $ U_l$ as in \eqref{utilde}.
\end{thm}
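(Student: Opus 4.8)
The plan is to observe that, once the four maps in the square are spelled out, its commutativity is \emph{verbatim} the asserted unitary equivalence. Indeed $i_1(\varphi)=M_\varphi$ on $L^2(b\Gamma_d)$, $i_2(\psi)=M_\psi$ on $L^2_{\rm anti}(\mathbb T^d)$, $\Pi_{\bm s}(\varphi)=\varphi\circ\bm s$, and $\Pi_l(X)=U_lXU_l^*$, so the identity $\Pi_l\circ i_1=i_2\circ\Pi_{\bm s}$ says precisely that $U_lM_\varphi U_l^*=M_{\varphi\circ\bm s}$ for every $\varphi\in L^\infty(b\Gamma_d)$. Thus the proof reduces to one short intertwining computation, after a preliminary check that the bottom-right corner of the diagram is meaningful.

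For that preliminary check I would verify that $M_{\varphi\circ\bm s}$ restricts to a bounded operator on $L^2_{\rm anti}(\mathbb T^d)$. Since $\varphi\circ\bm s$ lies in $L^\infty_{\text{sym}}(\mathbb T^d)$, the product of a symmetric $L^\infty$ function with an antisymmetric $L^2$ function is again antisymmetric a.e.: if $h(e^{i\bm\theta_\sigma})=h(e^{i\bm\theta})$ and $g(e^{i\bm\theta_\sigma})=\operatorname{sgn}(\sigma)g(e^{i\bm\theta})$ a.e., then $(hg)(e^{i\bm\theta_\sigma})=\operatorname{sgn}(\sigma)(hg)(e^{i\bm\theta})$ a.e. Hence $L^2_{\rm anti}(\mathbb T^d)$ reduces $M_{\varphi\circ\bm s}$ on $L^2(\mathbb T^d)$, the arrow $i_2$ is a well-defined unital $*$-homomorphism, and $\|M_{\varphi\circ\bm s}\|\le\|\varphi\circ\bm s\|_\infty$. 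I would also record here the null-set compatibility underlying $\Pi_{\bm s}$ in the first place: because $\bm s$ pushes the measure $\mu$ on $\mathbb T^d$ forward to $\nu$ on $b\Gamma_d$, one has $\varphi=0$ $\nu$-a.e.\ iff $\varphi\circ\bm s=0$ $\mu$-a.e., so $\Pi_{\bm s}$ is a well-defined (isometric) $*$-isomorphism and $\|M_\varphi\|=\|M_{\varphi\circ\bm s}\|$.

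Now the computation. Fix $f\in L^2(b\Gamma_d)$ and use the formula \eqref{utilde} for $U_l$. Because composition with the fixed map $\bm s$ is an algebra homomorphism, $(\varphi f)\circ\bm s=(\varphi\circ\bm s)(f\circ\bm s)$, and therefore
$$U_l(M_\varphi f)=\frac{1}{\|J_{\bm s}\|}\,J_{\bm s}\cdot\big((\varphi f)\circ\bm s\big)=(\varphi\circ\bm s)\cdot\Big(\frac{1}{\|J_{\bm s}\|}\,J_{\bm s}\cdot(f\circ\bm s)\Big)=M_{\varphi\circ\bm s}\,(U_lf).$$
Thus $U_lM_\varphi=M_{\varphi\circ\bm s}U_l$, and since $U_l$ is unitary (proof of Theorem \ref{Iso1}), $\Pi_l(M_\varphi)=U_lM_\varphi U_l^*=M_{\varphi\circ\bm s}=i_2(\Pi_{\bm s}(\varphi))$; that is, the square commutes and the two multiplication operators are unitarily equivalent via $U_l$. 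I do not foresee a genuine obstacle: the only two points that deserve a sentence each rather than being left implicit are the reducing-subspace fact that makes $i_2$ meaningful and the $\mu$--$\nu$ null-set compatibility behind $\Pi_{\bm s}$, everything else being a one-line unwinding of definitions.
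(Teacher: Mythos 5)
Your proof is correct and follows essentially the same route as the paper: both reduce the commutativity of the square to the single intertwining identity $U_l M_\varphi = M_{\varphi\circ\bm s}U_l$, verified by the same one-line computation using $(\varphi f)\circ\bm s=(\varphi\circ\bm s)(f\circ\bm s)$. The extra preliminary checks you record (that $L^2_{\rm anti}(\mathbb T^d)$ reduces $M_{\varphi\circ\bm s}$ and the $\mu$--$\nu$ null-set compatibility) are sensible but routine points the paper leaves implicit.
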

\begin{proof}
To show that the above diagram commutes all we need to show is that $U_l M_\varphi  U_l^*=M_{\varphi \circ \bm s}$, for every $\varphi$ in $L^\infty(b\Gamma_d)$. This follows from the following computation: for every $\varphi$ in $L^\infty(b\Gamma_d)$ and $f \in L^2_{\text{anti}}(\mathbb T^d)$,
$$
 U_l M_\varphi  U_l^*(f)= U_l (\varphi U_l^* f)=( \varphi\circ \bm s) \frac{1}{\|J_{\bm s}\|}J_{\bm s}( U_l^*f\circ \bm s )= M_{\varphi\circ \bm s}(f).
$$
\end{proof}
As a consequence of the above theorem, we obtain that the Toeplitz operators on the Hardy space of the symmetrized polydisk are unitarily equivalent to those on $H^2_{\text{anti}}(\mathbb D^d)$.
\begin{corollary}\label{equiv-Toep}
For a $\varphi \in L^\infty(b\Gamma_d)$, $T_\varphi$ is unitarily equivalent to $T_{\varphi\circ \bm s}:=P_aM_{\varphi \circ \bm s}|_{H^2_{\operatorname{anti}}(\mathbb D^d)}$, where $P_a$ stands for the projection of $L^2_{\operatorname{anti}}(\mathbb T^d)$ onto $H^2_{\operatorname{anti}}(\mathbb D^d)$.
\end{corollary}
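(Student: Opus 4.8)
The plan is to deduce this directly from the commutative square of Theorem~\ref{Iso1} together with the Laurent-level identification of Theorem~\ref{equiv-Laurent}. Throughout, view $H^2(\mathbb G_d)$ as a subspace of $L^2(b\Gamma_d)$ via the isometric embedding $\iota := U_l^{-1}\circ W\circ U_h$ furnished in the proof of Theorem~\ref{Iso1}, so that $Pr$ is the orthogonal projection of $L^2(b\Gamma_d)$ onto this copy; likewise regard $H^2_{\rm anti}(\mathbb D^d)$ as the subspace $W\big(H^2_{\rm anti}(\mathbb D^d)\big)$ of $L^2_{\rm anti}(\mathbb T^d)$, with associated projection $P_a$. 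With these identifications, $T_\varphi$ is the compression $Pr\, M_\varphi|_{H^2(\mathbb G_d)}$ and $T_{\varphi\circ\bm s}$ is the compression $P_a\, M_{\varphi\circ\bm s}|_{H^2_{\rm anti}(\mathbb D^d)}$, each read as an operator on the respective Hardy space.

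The first step is to check that the unitary $U_l:L^2(b\Gamma_d)\to L^2_{\rm anti}(\mathbb T^d)$ carries one copy of the Hardy space onto the other. Commutativity of the square in Theorem~\ref{Iso1} gives $U_l\circ\iota = W\circ U_h$, and since $U_h$ maps $H^2(\mathbb G_d)$ \emph{onto} $H^2_{\rm anti}(\mathbb D^d)$ it follows that $U_l\big(\iota(H^2(\mathbb G_d))\big) = W\big(H^2_{\rm anti}(\mathbb D^d)\big)$. Hence $U_l$ intertwines the two projections, $U_l\, Pr\, U_l^* = P_a$, and its restriction to $\iota(H^2(\mathbb G_d))$ is a unitary onto $W\big(H^2_{\rm anti}(\mathbb D^d)\big)$.

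The second step is to feed this into Theorem~\ref{equiv-Laurent}, which asserts $U_l\, M_\varphi\, U_l^* = M_{\varphi\circ\bm s}$ for every $\varphi\in L^\infty(b\Gamma_d)$. Compressing and using the intertwining from the first step,
\[
U_l\, T_\varphi\, U_l^* \;=\; U_l\,(Pr\, M_\varphi\, Pr)\,U_l^* \;=\; (U_l\,Pr\,U_l^*)(U_l\,M_\varphi\,U_l^*)(U_l\,Pr\,U_l^*) \;=\; P_a\, M_{\varphi\circ\bm s}\, P_a \;=\; T_{\varphi\circ\bm s}.
\]
Tracing the identifications back, the unitary implementing the equivalence is $W\circ U_h$, i.e., $U_h$ followed by the radial-limit identification $W$; equivalently, under that identification, $U_h\, T_\varphi\, U_h^* = T_{\varphi\circ\bm s}$ on $H^2_{\rm anti}(\mathbb D^d)$.

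I do not expect any genuine obstacle here: every ingredient has already been established and the argument is a short diagram chase. The only point deserving a line of care is the identification $U_l\, Pr\, U_l^* = P_a$ — that the embedding of $H^2(\mathbb G_d)$ into $L^2(b\Gamma_d)$ is compatible, under $U_l$, with the embedding of $H^2_{\rm anti}(\mathbb D^d)$ into $L^2_{\rm anti}(\mathbb T^d)$ — and this is precisely what the commutative square of Theorem~\ref{Iso1} encodes.
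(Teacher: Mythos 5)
Your proof is correct and follows essentially the same route as the paper: conjugate $M_\varphi$ by the unitary identification ($U_l$, equivalently $U_h$ on the Hardy-space level) using Theorem~\ref{equiv-Laurent} and then compress. The only difference is that you make explicit the projection compatibility $U_l\, Pr\, U_l^{*}=P_a$ coming from the commutative square of Theorem~\ref{Iso1}, a point the paper's shorter computation leaves implicit.
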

\begin{proof}
This follows from the fact that the operators $M_\varphi$ and $M_{\varphi \circ \bm s}$ are unitarily equivalent via the unitary $U_h$ as in \eqref{unotilde}. Indeed for every $f\in H^2(\mathbb G_d)$
$$
U_hM_{\varphi}f=\frac{1}{\|J_{\bm s}\|}\varphi\circ\bm s(\cdot)f\circ\bm s(\cdot)J_{\bm s}(\cdot)=M_{\varphi\circ\bm s}U_hf.
$$
\end{proof}
\begin{notation}
It is trivial to see that the coordinate functions $s_1,\dots,s_{d-1},p$ are in $L^\infty(b\Gamma_d)$. Just like in the classical case, the Toeplitz operators $T_{s_1},T_{s_2},\dots, T_{s_{d-1}},T_p$ will play a significant role in describing a general Toeplitz operator on $H^2(\mathbb G_d)$. {\em For each $j=1,2,\dots, d-1$, we shall denote by $T_{s_j(\bm z)}$ and $T_{s_d(\bm z)}$ on $H^2_{\operatorname{anti}}(\mathbb D^d)$ the unitary copies of $T_{s_j}$ and $T_p$, respectively. Similarly, we use the notations $M_{s_1(\bm z)},M_{s_2(\bm z)},\dots,M_{s_d(\bm z)}$ to denote the unitary copies of $M_{s_1},M_{s_2},\dots,M_{p}$, respectively.}
\end{notation}

We end the subsection with one more isomorphic copy of the Hardy space $H^2(\mathbb G_d)$. If $\mathcal E$ is a Hilbert space, let $\mathcal O (\mathbb D , \mathcal E)$
be the class of all $\mathcal E$-valued holomorphic functions on $\mathbb D$. Let
$$H^2_{\mathcal E}(\mathbb D) = \{ f(z) = \sum a_k z^k \in \mathcal O (\mathbb D , \mathcal E) : a_k \in \mathcal E \mbox{ with } \| f \|^2 = \sum \| a_k \|^2 < \infty \}.$$
\begin{lemma}\label{vector-valued-Hardy} There is a Hilbert space isomorphism $U_1$ from $H^2_{\operatorname{anti}}(\mathbb D^d)$ onto the vector-valued Hardy space $H^2_{\mathcal E}(\mathbb D)$, where
$$\mathcal E = \overline{\operatorname{span}}\{ \bm{a_p(z)}: \bm p\in \llbracket n \rrbracket \text{ such that } \bm p=(p_1,p_2,\dots,p_{d-1},0)\} \subset H^2_{\operatorname{anti}}(\mathbb D^d).$$
    Moreover, this unitary $U_1$ intertwines $T_{s_d(\bm z)}$ on $H^2_{\operatorname{anti}}(\mathbb D^d)$ with the unilateral shift of infinite multiplicity $T_z$ on $H^2_{\mathcal E}(\mathbb D)$. \end{lemma}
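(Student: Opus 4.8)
The plan is to exhibit $U_1$ explicitly on the orthogonal basis $\mathcal B = \{\bm{a_p(z)} : \bm p \in \llbracket n \rrbracket\}$ of $H^2_{\operatorname{anti}}(\mathbb D^d)$ and to check it does what is claimed. First I would observe that every partition $\bm p = (p_1 > p_2 > \cdots > p_d \ge 0)$ in $\llbracket n \rrbracket$ can be written uniquely as $\bm p = \bm q + (k,k,\dots,k)$ where $k = p_d \ge 0$ and $\bm q = (p_1 - k, \dots, p_{d-1} - k, 0) \in \llbracket n \rrbracket$ has last entry $0$. At the level of the determinant $\bm{a_p(z)} = \det((z_i^{p_j}))$, factoring $z_i^k$ out of the $i$-th row of this matrix gives the product identity
$$
\bm{a_p(z)} = (z_1 z_2 \cdots z_d)^k \, \bm{a_q(z)} = s_d(\bm z)^k \, \bm{a_q(z)},
$$
since $z_1 z_2 \cdots z_d = s_d(\bm z)$. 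Because $T_{s_d(\bm z)}$ is the multiplication operator by $s_d(\bm z)$ on $H^2_{\operatorname{anti}}(\mathbb D^d)$, this says $\bm{a_p(z)} = T_{s_d(\bm z)}^{k}\, \bm{a_q(z)}$. Thus the basis $\mathcal B$ is partitioned, according to the value $k = p_d$, into the images under powers of $T_{s_d(\bm z)}$ of the subbasis $\mathcal B_0 = \{\bm{a_q(z)} : \bm q = (q_1, \dots, q_{d-1}, 0) \in \llbracket n \rrbracket\}$ which spans $\mathcal E$.

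Next I would define $U_1 \colon H^2_{\operatorname{anti}}(\mathbb D^d) \to H^2_{\mathcal E}(\mathbb D)$ on the basis by sending $\bm{a_p(z)} = s_d(\bm z)^k \bm{a_q(z)}$ to the monomial $z^k \otimes \bm{a_q(z)} \in H^2_{\mathcal E}(\mathbb D)$ (i.e. the $\mathcal E$-valued function $z \mapsto z^k \bm{a_q(z)}$), and extend linearly. To see this is a well-defined unitary: the vectors $\{\bm{a_p(z)}\}_{\bm p \in \llbracket n \rrbracket}$ form an orthogonal basis of the domain, and the vectors $\{z^k \otimes \bm{a_q(z)}\}_{k \ge 0,\, \bm q}$ form an orthogonal basis of $H^2_{\mathcal E}(\mathbb D)$ (orthogonality of distinct $z^k, z^{k'}$ in $H^2$ combined with orthogonality of $\mathcal B_0$ in $\mathcal E$); moreover the correspondence preserves norms up to the scalar $\|\bm{a_q}\|$-weights, since $\|s_d(\bm z)^k \bm{a_q(z)}\| = \|\bm{a_q(z)}\|$ (multiplication by the inner function $s_d(\bm z)$, restricted appropriately, is isometric — or directly, $|z_1 \cdots z_d| = 1$ on $\mathbb T^d$) and $\|z^k \otimes \bm{a_q}\|_{H^2_{\mathcal E}} = \|\bm{a_q}\|_{\mathcal E}$. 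So $U_1$ carries an orthogonal basis to an orthogonal basis with matching norms, hence extends to a Hilbert space isomorphism.

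Finally I would verify the intertwining $U_1 T_{s_d(\bm z)} = T_z U_1$. It suffices to check this on the basis: $T_{s_d(\bm z)} \bm{a_p(z)} = s_d(\bm z)^{k+1} \bm{a_q(z)}$, which $U_1$ sends to $z^{k+1} \otimes \bm{a_q(z)}$; on the other hand $U_1 \bm{a_p(z)} = z^k \otimes \bm{a_q(z)}$, and applying the infinite-multiplicity shift $T_z$ on $H^2_{\mathcal E}(\mathbb D)$ gives $z^{k+1} \otimes \bm{a_q(z)}$ as well. The two agree, so $U_1$ intertwines the two operators as claimed.

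The only mildly delicate point — and the step I would be most careful about — is the factorization $\bm{a_p(z)} = s_d(\bm z)^{p_d} \bm{a_q(z)}$ together with the claim that multiplication by $s_d(\bm z) = z_1 \cdots z_d$ maps $\mathcal B$ bijectively, in an orthogonality- and norm-preserving way, onto the sub-basis indexed by partitions with strictly positive last entry. Once that combinatorial/multiplicative bookkeeping is in place, everything else is a routine basis computation. (One should also note $\bm{a_q(z)} \in \mathcal E$ by definition of $\mathcal E$ when $q_d = 0$, so the target vectors genuinely lie in $H^2_{\mathcal E}(\mathbb D)$.)
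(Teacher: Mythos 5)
Your proposal is correct and follows essentially the same route as the paper: the same factorization $\bm{a_p(z)} = s_d(\bm z)^{p_d}\,\bm{a_{\tilde p}(z)}$, the same definition of $U_1$ on the orthogonal basis, and the same norm/orthogonality and intertwining checks (your row-by-row determinant justification of the factorization and the $|z_1\cdots z_d|=1$ argument are just slightly more explicit versions of what the paper states).
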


\begin{proof}
As observed above, the set $\{ \bm{a_p(z)}: \bm p\in \llbracket n \rrbracket \text{ such that } \bm p=(p_1,p_2,\dots,p_{d-1},0) \}$ forms an orthogonal basis in $\mathcal E$. Thus
$$
\{ z^q \bm a_{\bm p} : q\geq 0 \text{ and }\bm p\in \llbracket n \rrbracket \text{ such that } \bm p=(p_1,p_2,\dots,p_{d-1},0) \}
$$
is an orthogonal basis for $H^2_{\mathcal E}(\mathbb D)$. On the other hand, the space $H^2_{\text{anti}}(\mathbb D^d)$ is spanned by the orthogonal set
$\{\bm{a_p(z)}: \bm p\in \llbracket n \rrbracket\}$ and if $\bm p=(p_1,p_2,\dots,p_{d-1},p_d)$, then
$$\bm {a_p(z)}=(z_1z_2\cdots z_d)^{p_d}\bm{a_{\tilde p}(z)}={T^{p_d}_{s_d(\bm z)}}\bm{a_{\tilde p}(z)},$$
where $\bm{\tilde p}=(p_1-p_d,p_2-p_d,\dots,p_{d-1}-p_d,0)$. Define the unitary operator from $H^2_{\text{anti}}(\mathbb D^d)$ onto $H^2_{\mathcal E}(\mathbb D)$ by
$$
U_1:\bm {a_p(z)} \mapsto z^{p_d}\bm{a_{\tilde p}(z)},
$$and then extending linearly. This preserves norms because $T_{s_d(\bm z)}$ is an isometry on $H^2_{\text{anti}}(\mathbb D^d)$ and $T_z$ is an isometry on $H^2_{\mathcal E}(\mathbb D)$. It is surjective and obviously intertwines $T_{s_d(\bm z)}$ and $T_z$.
\end{proof}
By virtue of the isomorphisms $U_h$ and $U_1$ described above, we have the following commutative diagram:
$$\begin{tikzcd}
(H^2(\mathbb G_d) , T_p) \arrow{r}{U_h  }  \arrow{rd}{U_2}
  & (H^2_{\text{anti}}(\mathbb D^d) , T_{s_d(\bm z)}) \arrow{d}{U_1} \\
    & (H^2_{\mathcal E}(\mathbb D) , T_z)
\end{tikzcd}$$
i.e., the operator $T_p$ on $H^2(\mathbb G_d)$ is unitarily equivalent to the unilateral shift $T_z$ on the vector valued Hardy space
$H^2_{\mathcal E}(\mathbb D)$ via the unitary $U_2$. We call $\mathcal E$ the {\em{coefficient space}} of the symmetrized polydisk. It is a subspace of $H^2_{\mathcal E}(\mathbb D)$ which
is naturally identifiable with the subspace of constant functions in $H^2_{\mathcal E}(\mathbb D)$.

\section{An algebraic characterization of Toeplitz operators and their basic properties}

%

Although a Toeplitz operator is defined in terms of an $L^\infty$ function, it is a difficult question of how to recognize a given bounded operator $T$ on the relevant Hilbert space as a Toeplitz operator. This question was answered for the Hardy space of the unit disc by Brown and Halmos in Theorem 6 of \cite{BH} where they showed that $T$ has to be invariant under conjugation by the unilateral shift. We show that in our context one needs not only $T_p$ but all of $T_{s_i}$, $i=1,2,\dots,d-1$  to give such a characterization.

We first note the following preliminary results each of which will play a significant role in obtaining a Brown--Halmos type relations for $H^2(\mathbb G_d)$.
\begin{lemma}
Let $M$ be a bounded operator on $L^2(b\Gamma_d)$ that commutes with each entry of the tuple $(M_{s_1}, M_{s_2},\dots,M_p)$. Then $M$ is of the form $M_\varphi$, for some function $\varphi$ in $L^\infty(b\Gamma_d)$.
\end{lemma}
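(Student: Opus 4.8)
The plan is to exploit the explicit model $L^2(b\Gamma_d)\cong L^2_{\mathrm{anti}}(\mathbb T^d)$ furnished by Theorem \ref{equiv-Laurent}, which carries $M_{s_i}$ to $M_{s_i(\bm z)}$ on the anti-symmetric space. Since the $*$-isomorphism $\Pi_l$ is a unitary conjugation, it suffices to prove the statement there: an operator $N$ on $L^2_{\mathrm{anti}}(\mathbb T^d)$ commuting with each $M_{s_i(\bm z)}$ (the multiplication operators by the elementary symmetric polynomials $s_1,\dots,s_d$ in $z_1,\dots,z_d$) must be multiplication by a function in $L^\infty_{\mathrm{sym}}(\mathbb T^d)$. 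First I would recall that $L^2_{\mathrm{anti}}(\mathbb T^d)=\mathbb P\,L^2(\mathbb T^d)$ where $\mathbb P$ is the anti-symmetrizing projection, and that on all of $L^2(\mathbb T^d)$ the classical Wermer/Helson--Lowdenslager-type fact holds: any bounded operator commuting with all $M_{z_1},\dots,M_{z_d}$ is $M_\psi$ for some $\psi\in L^\infty(\mathbb T^d)$. The obstacle is that our operator is only given to commute with multiplication by the \emph{symmetric} polynomials $s_i(\bm z)$, not with each coordinate $M_{z_k}$; and it lives on the smaller space $L^2_{\mathrm{anti}}$ rather than on $L^2(\mathbb T^d)$.

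To get around this, the key step is to \emph{dilate} $N$ to $L^2(\mathbb T^d)$ in a way that intertwines all coordinate multiplications. Pick any fixed nonzero anti-symmetric ``Vandermonde-type'' element, e.g. $a:=\bm{a_{(d-1,d-2,\dots,1,0)}}=\prod_{i<j}(z_i-z_j)$, which generates $L^2(\mathbb T^d)$ as a module over the symmetric functions: every $g\in L^2(\mathbb T^d)$ can be written (after anti-symmetrizing) in terms of $a$ times symmetric functions, because $L^2(\mathbb T^d)=\bigoplus_{\sigma}\mathbb P_\sigma L^2$ decomposes into isotypic pieces and multiplication by $a$ is, up to symmetric factors, an isomorphism $L^2_{\mathrm{sym}}\to L^2_{\mathrm{anti}}$. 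Concretely, $f\mapsto af$ is a unitary (up to the constant $\|a\|$, or after normalizing the measure) from $L^2_{\mathrm{sym}}(\mathbb T^d)$ onto $L^2_{\mathrm{anti}}(\mathbb T^d)$, and it carries $M_{s_i(\bm z)}|_{L^2_{\mathrm{sym}}}$ to $M_{s_i(\bm z)}|_{L^2_{\mathrm{anti}}}$. Hence $N$ transports to an operator $\widetilde N$ on $L^2_{\mathrm{sym}}(\mathbb T^d)$ commuting with every $M_{s_i(\bm z)}$.

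Now on $L^2_{\mathrm{sym}}(\mathbb T^d)$ the elementary symmetric polynomials $s_1(\bm z),\dots,s_d(\bm z)$ \emph{generate} a dense subalgebra of $L^\infty_{\mathrm{sym}}(\mathbb T^d)$ in the appropriate topology: by the fundamental theorem of symmetric functions, polynomials in $s_1,\dots,s_d$ are exactly the symmetric polynomials, which are uniformly dense in $C_{\mathrm{sym}}(\mathbb T^d)$ by Stone--Weierstrass, hence weak-$*$ dense in $L^\infty_{\mathrm{sym}}(\mathbb T^d)$, and the corresponding multiplication operators are $*$-strongly dense in $\{M_\psi:\psi\in L^\infty_{\mathrm{sym}}\}$. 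Therefore $\widetilde N$ commutes with every $M_\psi$, $\psi\in L^\infty_{\mathrm{sym}}$. Since $\{M_\psi:\psi\in L^\infty_{\mathrm{sym}}(\mathbb T^d)\}$ is a maximal abelian von Neumann algebra on $L^2_{\mathrm{sym}}(\mathbb T^d)$ (the space is cyclic for it, via the cyclic vector $\bm 1$, and separating), its commutant equals itself, so $\widetilde N=M_\eta$ for some $\eta\in L^\infty_{\mathrm{sym}}(\mathbb T^d)$. Transporting back through multiplication by $a$ shows $N=M_\eta$ on $L^2_{\mathrm{anti}}(\mathbb T^d)$, and then through $\Pi_l^{-1}$ gives $M=M_\varphi$ with $\varphi\circ\bm s=\eta$, i.e. $\varphi=\eta\circ(\bm s|_{\mathbb T^d})^{-1}\in L^\infty(b\Gamma_d)$ using the $*$-isomorphism $\Pi_{\bm s}$. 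The main obstacle, as noted, is the first reduction — verifying that multiplication by the Vandermonde factor $a$ implements a unitary equivalence between the symmetric and anti-symmetric $L^2$-spaces intertwining the $M_{s_i(\bm z)}$; once that module-theoretic point is in place, maximality of the masa and Stone--Weierstrass do the rest.
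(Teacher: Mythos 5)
There is a genuine gap, and it sits at the heart of your commutant argument: the density claim is false as stated. The polynomials in $s_1(\bm z),\dots,s_d(\bm z)$ are \emph{analytic} symmetric polynomials; the algebra they generate is not closed under conjugation, so Stone--Weierstrass does not apply, and they are not uniformly dense in $C_{\operatorname{sym}}(\mathbb T^d)$ nor weak-$*$ dense in $L^\infty_{\operatorname{sym}}(\mathbb T^d)$ (compare $d=1$: the weak-$*$ closure of the polynomials in $z$ inside $L^\infty(\mathbb T)$ is $H^\infty$, not $L^\infty$). Consequently, knowing that $\widetilde N$ commutes with each $M_{s_i(\bm z)}$ does not, by your route, give commutation with every $M_\psi$, $\psi\in L^\infty_{\operatorname{sym}}$. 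What is missing is commutation with the adjoints $M_{\overline{s_i}(\bm z)}$. That does hold, but you must say why: either invoke Fuglede's theorem (each $M_{s_i(\bm z)}$ is normal, so commuting with it forces commuting with its adjoint), or use the boundary identities $\overline{s_d}=s_d^{-1}$ and $\overline{s_i}=s_{d-i}\,s_d^{-1}$ on $\mathbb T^d$, so that commuting with the unitary $M_{s_d(\bm z)}$ gives commuting with its inverse and hence with all $M_{\overline{s_i}(\bm z)}$. Once you have the full $*$-algebra, Stone--Weierstrass and the masa property finish the proof. This is exactly the point the paper's proof handles in one stroke: the tuple $(M_{s_1},\dots,M_p)$ is a commuting tuple of \emph{normal} operators with Taylor joint spectrum $b\Gamma_d$, so by the spectral theorem the von Neumann algebra it generates is $L^\infty(b\Gamma_d)$, a maximal abelian von Neumann algebra, and its commutant is itself.

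A second, more technical flaw: the map $f\mapsto af$ with $a=\prod_{i<j}(z_i-z_j)$ is \emph{not} a unitary (even up to a constant) from $L^2_{\operatorname{sym}}(\mathbb T^d)$ with Lebesgue measure onto $L^2_{\operatorname{anti}}(\mathbb T^d)$, because $|a|$ is nonconstant on $\mathbb T^d$; it is bounded and injective with dense range, but $1/a\notin L^\infty$ (indeed $1/a\notin L^2$), so its inverse is unbounded and the transported operator $\widetilde N$ is not obviously well defined or bounded. The correct statement requires weighting the symmetric side by $|a|^2\,d\bm\theta=|J_{\bm s}|^2\,d\bm\theta$ --- but that is precisely the measure $\nu$ defining $L^2(b\Gamma_d)$, so your detour through $L^2_{\operatorname{anti}}$ and back merely reconstructs the unitary $U_l$ of Theorem \ref{equiv-Laurent} and returns you to the space you started from. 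With the weighted measure the reduction is harmless (though redundant); without it, the dilation step fails. In short: fix the measure, and replace the Stone--Weierstrass claim by Fuglede (or the relations of Lemma \ref{L:GammaUni} on the boundary), and you recover essentially the paper's spectral-theorem argument.
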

\begin{proof}
Since $(M_{s_1}, M_{s_2},\dots,M_p)$ is a commuting tuple of normal operators, the Taylor joint spectrum $\sigma_T(M_{s_1}, M_{s_2},\dots,M_p)=b\Gamma_d$. Now it follows from the spectral theorem for commuting normal operators that the von Neumann algebra generated by $\{M_{s_1}, M_{s_2},\dots,M_p\}$ is $L^\infty(b\Gamma_d)$, which is a maximal abelian von Neumann algebra, i.e., $\{M_{s_1}, M_{s_2},\dots,M_p\}'$ is $*$-isomorphic to $L^\infty(b\Gamma_d)$.
\end{proof}
In view of identification results Theorem \ref{Iso1} and Theorem \ref{equiv-Laurent}, the following is then an obvious consequence of the above lemma.
\begin{corollary}\label{polydisk-lauraoperator}
Any bounded operator $M$ on $L^2_{\operatorname{anti}}(\mathbb T^d)$ that commutes with each entry of the tuple $(M_{s_1(\bm z)}, M_{s_2(\bm z)},\dots,M_{s_d(\bm z)})$ is of the form $M_\varphi$, for some function $\varphi$ in $L^\infty_{\operatorname{sym}}(\mathbb T^d)$.
\end{corollary}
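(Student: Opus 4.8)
The plan is to transport the preceding lemma across the unitary $U_l$ of \eqref{utilde}. First I would record how $U_l$ matches up the relevant coordinate multiplication operators: by Theorem \ref{equiv-Laurent} applied to the coordinate symbols $\varphi=s_1,\dots,s_{d-1},p$ on $b\Gamma_d$, one has $U_l M_{s_i} U_l^{*}=M_{s_i\circ\bm s}$, and $s_i\circ\bm s$ is precisely the $i$-th elementary symmetric function $s_i(\bm z)$ on $\mathbb T^d$; thus $U_l M_{s_i} U_l^{*}=M_{s_i(\bm z)}$ for $i=1,\dots,d-1$ and $U_l M_p U_l^{*}=M_{s_d(\bm z)}$, which is exactly how the copies $M_{s_j(\bm z)}$ were defined in the notation preceding Lemma \ref{vector-valued-Hardy}. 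Consequently, conjugation by $U_l^{*}$ carries the commutant $\{M_{s_1(\bm z)},\dots,M_{s_d(\bm z)}\}'$ in $\mathcal B(L^2_{\operatorname{anti}}(\mathbb T^d))$ bijectively onto the commutant $\{M_{s_1},\dots,M_p\}'$ in $\mathcal B(L^2(b\Gamma_d))$.

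Next, given a bounded $M$ on $L^2_{\operatorname{anti}}(\mathbb T^d)$ commuting with each $M_{s_j(\bm z)}$, I would set $\widetilde M:=U_l^{*}MU_l$. By the intertwining just noted, $\widetilde M$ commutes with each of $M_{s_1},\dots,M_{s_{d-1}},M_p$ on $L^2(b\Gamma_d)$, so the preceding lemma supplies a function $\psi\in L^\infty(b\Gamma_d)$ with $\widetilde M=M_\psi$. Applying $U_l(\cdot)U_l^{*}$ and invoking Theorem \ref{equiv-Laurent} once more gives $M=U_l M_\psi U_l^{*}=M_{\psi\circ\bm s}$. Finally, since the $*$-isomorphism $\Pi_{\bm s}\colon\varphi\mapsto\varphi\circ\bm s$ maps $L^\infty(b\Gamma_d)$ onto $L^\infty_{\operatorname{sym}}(\mathbb T^d)$, the symbol $\varphi:=\psi\circ\bm s$ lies in $L^\infty_{\operatorname{sym}}(\mathbb T^d)$, which is the desired conclusion.

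There is essentially no obstacle beyond bookkeeping: the only point that wants a moment's care is the identification $s_i\circ\bm s=s_i(\bm z)$ matching the two families of coordinate multiplications on either side of $U_l$, after which the statement follows verbatim from the lemma. For completeness one could instead give a self-contained argument, observing directly that $(M_{s_1(\bm z)},\dots,M_{s_d(\bm z)})$ is a commuting tuple of normal operators on $L^2_{\operatorname{anti}}(\mathbb T^d)$ whose generated von Neumann algebra is maximal abelian and $*$-isomorphic to $L^\infty_{\operatorname{sym}}(\mathbb T^d)$, by the same spectral-theorem reasoning used in the proof of the lemma; but routing through $U_l$ is the shortest path given the identification results already established.
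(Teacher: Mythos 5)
Your argument is correct and is exactly the route the paper takes: the paper simply states that the corollary is an obvious consequence of the preceding lemma via the identification results (Theorem \ref{Iso1} and Theorem \ref{equiv-Laurent}), i.e.\ conjugation by the unitary $U_l$, which is precisely what you carry out in detail. Your write-up just makes explicit the bookkeeping $U_l M_{s_i} U_l^{*}=M_{s_i(\bm z)}$ and the surjectivity of $\Pi_{\bm s}$ onto $L^\infty_{\operatorname{sym}}(\mathbb T^d)$ that the paper leaves implicit.
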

The following algebraic relations of the multipliers $M_{s_1},M_{s_2},\dots,M_{p}$ on $L^2(b\Gamma_d)$ and their restrictions to $H^2(\mathbb G_d)$ give us a major lead to the Brown--Halmos relations.
\begin{lemma}\label{L:GammaUni}
For every $j=1,2,\dots d-1$, $M_{s_j}^*M_p=M_{s_{d-j}}$ and $T_{s_j}^*T_p=T_{s_{d-j}}$.
\end{lemma}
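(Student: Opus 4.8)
The plan is to prove the two identities separately; the second follows from the first by compression, but one needs to be careful about why compression is legitimate. The key point for the first identity $M_{s_j}^*M_p = M_{s_{d-j}}$ is that on the distinguished boundary $b\Gamma_d = \bm s(\mathbb T^d)$, the coordinate functions $s_j$ satisfy a pointwise symmetry relation. First I would transport the whole problem to $L^2_{\operatorname{anti}}(\mathbb T^d)$ via Theorem \ref{equiv-Laurent}, so it suffices to show $M_{s_j(\bm z)}^* M_{s_d(\bm z)} = M_{s_{d-j}(\bm z)}$ there. On $\mathbb T^d$ one has $\overline{z_k} = 1/z_k$, so $\overline{s_j(\bm z)}$ — a sum of products of $j$ distinct $\overline{z_k}$'s — equals $s_{d-j}(\bm z)/s_d(\bm z)$, because dividing the monomial $z_1 \cdots z_d = s_d(\bm z)$ by a product of $j$ of the variables leaves a product of the complementary $d-j$ variables, and summing over all size-$j$ subsets gives exactly $s_{d-j}(\bm z)$ in the numerator. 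Hence $\overline{s_j(\bm z)}\, s_d(\bm z) = s_{d-j}(\bm z)$ a.e. on $\mathbb T^d$ (note $s_d(\bm z) = z_1\cdots z_d$ is unimodular there, so there is no division-by-zero issue). Since $M_{s_j(\bm z)}^* = M_{\overline{s_j(\bm z)}}$ as multiplication operators and multiplication operators multiply symbols, this gives $M_{s_j(\bm z)}^* M_{s_d(\bm z)} = M_{\overline{s_j(\bm z)} s_d(\bm z)} = M_{s_{d-j}(\bm z)}$, and pulling back through the $*$-isomorphism of Theorem \ref{equiv-Laurent} yields $M_{s_j}^* M_p = M_{s_{d-j}}$ on $L^2(b\Gamma_d)$.

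For the Toeplitz version, I would argue as follows. Compressing the Laurent identity to $H^2(\mathbb G_d)$ with the projection $Pr$ gives $Pr\, M_{s_j}^* M_p\, Pr = Pr\, M_{s_{d-j}}\, Pr = T_{s_{d-j}}$. The left side is $Pr\, M_{s_j}^*\, M_p\, Pr$; since $H^2(\mathbb G_d)$ is invariant under $M_p$ (indeed under each $M_{s_i}$ and $M_p$, because multiplication by a coordinate function sends holomorphic functions to holomorphic functions), we have $M_p\, Pr = Pr\, M_p\, Pr = T_p$ when restricted to $H^2(\mathbb G_d)$, so $M_p|_{H^2(\mathbb G_d)} = T_p$. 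Dually, $Pr\, M_{s_j}^* = Pr\, M_{s_j}^*\, Pr = (Pr\, M_{s_j}\, Pr)^* = T_{s_j}^*$ on $H^2(\mathbb G_d)$. Therefore the compression of $M_{s_j}^* M_p$ to $H^2(\mathbb G_d)$ equals $T_{s_j}^* T_p$, and we conclude $T_{s_j}^* T_p = T_{s_{d-j}}$.

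The only genuine subtlety — the step I'd expect to need the most care — is the combinatorial identity $\overline{s_j(\bm z)} = s_{d-j}(\bm z)/s_d(\bm z)$ on $\mathbb T^d$, i.e. the claim that complementation of $j$-subsets inside the full product realizes the passage from $s_j$ to $s_{d-j}$; this is elementary but deserves to be stated explicitly since it is the structural fact that makes the whole Brown--Halmos program for $\mathbb G_d$ work. Everything else is routine: unimodularity of $z_1\cdots z_d$ on the torus, the algebra of multiplication operators, and the fact that $H^2(\mathbb G_d)$ is a common invariant subspace of $M_{s_1},\dots,M_p$. I would present the proof in the $\mathbb T^d$ picture and then remark that Theorem \ref{equiv-Laurent} and Corollary \ref{equiv-Toep} transfer it verbatim to $b\Gamma_d$.
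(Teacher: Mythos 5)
Your proof is correct and follows essentially the same route as the paper: the pointwise identity $\overline{s_j(\bm z)}\,s_d(\bm z)=s_{d-j}(\bm z)$ on $\mathbb T^d$ gives the Laurent relation, and the Toeplitz relation then follows by compressing to $H^2(\mathbb G_d)$ using its invariance under the analytic coordinate multipliers, which is exactly the content of the paper's inner-product computation $\langle T_{s_j}^*T_p f,g\rangle=\langle M_{s_j}^*M_p f,g\rangle=\langle T_{s_{d-j}}f,g\rangle$. Your extra detail (the subset-complementation argument and the explicit transport to $L^2_{\operatorname{anti}}(\mathbb T^d)$) merely spells out what the paper leaves implicit.
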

\begin{proof}
From the definition of the elementary symmetric functions $s_j$, it follows that if $\bm z\in\mathbb T^d$, then one has
$$s_{d-j}(\bm z)=\overline{s_j(\bm z)}s_d(\bm z)$$and hence follows the first set of equations. For the second part we choose $f,g$ in $H^2(\mathbb G_d)$ and compute using the first part
$$
\langle T_{s_j}^*T_p f,g  \rangle = \langle T_p f,T_{s_j}g  \rangle=\langle M_{s_j}^*M_p f,g  \rangle=\langle M_{s_{d-j}}f,g  \rangle=\langle T_{s_{d-j}}f,g  \rangle.
$$ This proves the lemma.
\end{proof}
\begin{thm}\label{T:BH}
Let $T$ be a bounded operator on $H^2(\mathbb G_d)$. Then $T$ is a Toeplitz operator if and only if for each $i=1,2,\dots,d-1$,
\begin{eqnarray}\label{Toeplitzcharc}
 T_{s_i}^*TT_p=TT_{s_{d-i}} \text{ and } T_p^*TT_p=T.
\end{eqnarray}
\end{thm}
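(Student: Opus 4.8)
The plan is to mimic the Brown--Halmos argument, but transported to the vector-valued Hardy space $H^2_{\mathcal E}(\mathbb D)$ via the unitary $U_2$ of \S\ref{S:HardyG}. First I would dispose of necessity: if $T = T_\varphi$ for some $\varphi \in L^\infty(b\Gamma_d)$, then $T = Pr\, M_\varphi|_{H^2(\mathbb G_d)}$, and since $M_{s_i}^* M_p = M_{s_{d-i}}$ and $M_p^* M_p = I$ on $L^2(b\Gamma_d)$ by Lemma~\ref{L:GammaUni} (and its proof), together with the facts that $M_p$ maps $H^2(\mathbb G_d)$ into itself while $M_p^*$ maps $(H^2(\mathbb G_d))^\perp$ into itself (because $T_p$ is a shift, by the diagram after Lemma~\ref{vector-valued-Hardy}), the relations \eqref{Toeplitzcharc} fall out of a short computation with the projection $Pr$. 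The only mild subtlety is checking that $M_p$ and $M_{s_i}$ respect the orthogonal decomposition $L^2(b\Gamma_d) = H^2(\mathbb G_d) \oplus (H^2(\mathbb G_d))^\perp$ in the right one-sided way; this follows from $H^2(\mathbb G_d)$ being invariant under each $M_{s_i}$ and $M_p$.

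The substance is sufficiency. Assume $T$ satisfies \eqref{Toeplitzcharc}. The relation $T_p^* T T_p = T$ alone says $T$ is a Toeplitz operator \emph{with respect to the single shift} $T_p$; transporting through $U_2$, $T$ becomes an operator $\widetilde T$ on $H^2_{\mathcal E}(\mathbb D)$ with $T_z^* \widetilde T T_z = \widetilde T$. By the classical Brown--Halmos theorem in the vector-valued setting, such a $\widetilde T$ is the Toeplitz operator $T_\Phi$ for a unique $\Phi \in L^\infty(\mathbb T, \mathcal B(\mathcal E))$ (equivalently, the compression to $H^2_{\mathcal E}(\mathbb D)$ of the Laurent operator $M_\Phi$ on $L^2_{\mathcal E}(\mathbb T)$). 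So after the first relation we already know $T$ is the compression of \emph{some} Laurent-with-operator-symbol; what remains is to upgrade the operator symbol $\Phi$ to an honest scalar symbol $\varphi \in L^\infty(b\Gamma_d)$, and this is exactly where the remaining relations $T_{s_i}^* T T_p = T T_{s_{d-i}}$ must be used. Concretely, I would pull everything back to $L^2_{\rm anti}(\mathbb T^d)$ (Corollary~\ref{equiv-Toep}), let $M_\varphi$ denote the Laurent operator on $L^2_{\rm anti}(\mathbb T^d)$ extending (the unitary copy of) $\widetilde T = T_\Phi$, and check that $M_\varphi$ commutes with each of $M_{s_1(\bm z)}, \dots, M_{s_d(\bm z)}$; then Corollary~\ref{polydisk-lauraoperator} identifies $M_\varphi$ with multiplication by a genuine function $\varphi \in L^\infty_{\rm sym}(\mathbb T^d)$, hence (via $\Pi_{\bm s}$) with an $L^\infty(b\Gamma_d)$ symbol, and its compression to $H^2(\mathbb G_d)$ is $T$ by construction.

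The commutation with $M_{s_d(\bm z)}$ (the copy of $M_p$) is immediate since $\widetilde T = T_\Phi$ already commutes with $M_z = M_{s_d(\bm z)}$. The crux, and the step I expect to be the main obstacle, is verifying that $M_\varphi$ commutes with each $M_{s_i(\bm z)}$, $1 \le i \le d-1$: here one has only the mixed relation $T_{s_i}^* T T_p = T T_{s_{d-i}}$ on the compressed level, and one must propagate it to the full Laurent operator. The idea is to use the relation $M_{s_i}^* M_p = M_{s_{d-i}}$ on $L^2$ (Lemma~\ref{L:GammaUni}) to rewrite $T_{s_i}^* T T_p = T T_{s_{d-i}}$ as a statement about how $M_\varphi$ moves between $H^2(\mathbb G_d)$, $T_p H^2(\mathbb G_d)$, and their orthocomplements --- essentially showing $M_\varphi M_{s_i} - M_{s_i} M_\varphi$ annihilates a dense set obtained by applying powers of $M_p, M_p^*$ to $H^2(\mathbb G_d)$, so that the Toeplitz-type relation forces the commutator to vanish. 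I would carry this out by expanding a general vector in $L^2(b\Gamma_d)$ as a (finite) sum $\sum M_p^{-k} h_k$ with $h_k \in H^2(\mathbb G_d)$ (using that $H^2(\mathbb G_d)$ is a shift-invariant subspace whose $M_p$-orbit spans $L^2$), reducing the desired commutation to the compressed relations \eqref{Toeplitzcharc} level by level, and tracking the bookkeeping that the factor $s_d = p$ makes the indices $i$ and $d-i$ interchange correctly.
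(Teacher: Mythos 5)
Your outline is correct, but it reaches the conclusion by a somewhat different route than the paper, so a comparison is worth recording. The paper never invokes the vector-valued Brown--Halmos theorem: working on $H^2_{\rm anti}(\mathbb D^d)$ via Corollary~\ref{equiv-Toep}, it builds the Laurent-type extension by hand, setting $T_r=M_{s_d(\bm z)}^{*r}TP_aM_{s_d(\bm z)}^{r}$ and using $T_p^*TT_p=T$ together with the fact that every $\bm a_{\bm p}$, $\bm p\in\llbracket z\rrbracket$, lands in $H^2_{\rm anti}(\mathbb D^d)$ after multiplication by a high power of $s_d(\bm z)$, to get a weak limit $T_\infty$ with the right matrix entries; it then shows $T_\infty$ commutes with every $M_{s_i(\bm z)}$ using the mixed relations and Lemma~\ref{L:GammaUni}, and identifies $T_\infty=M_\varphi$ by Corollary~\ref{polydisk-lauraoperator}. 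You instead outsource the construction of the extension to the classical block Brown--Halmos theorem on $H^2_{\mathcal E}(\mathbb D)$ (equivalently, to the fact that the commutant of the bilateral shift of infinite multiplicity consists of decomposable multiplications), and then do the same second half: force commutation with the $M_{s_i(\bm z)}$ and apply Corollary~\ref{polydisk-lauraoperator}. What your route buys is a cleaner separation of the two relations in \eqref{Toeplitzcharc} --- the single relation $T_p^*TT_p=T$ already yields an operator-valued symbol, and the mixed relations are isolated as exactly what scalarizes it; what it costs is an external citation (with the caveat that the operator symbol $\Phi$ is only a WOT-measurable $\mathcal B(\mathcal E)$-valued $L^\infty$ function, which is fine since $\mathcal E$ is separable, and whose standard proof is precisely the weak-limit argument the paper writes out). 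The step you flag as the main obstacle does go through, and more easily than you suggest: since $\bigcup_r M_p^{-r}H^2(\mathbb G_d)$ is dense in $L^2(b\Gamma_d)$ and the extension $L$ commutes with $M_p^{\pm1}$, checking $LM_{s_i}=M_{s_i}L$ reduces to checking $\langle LM_{s_i}h,M_p^{-t}g\rangle=\langle M_{s_i}Lh,M_p^{-t}g\rangle$ for $h,g\in H^2(\mathbb G_d)$, and using $M_{s_i}^*=M_p^{-1}M_{s_{d-i}}$ (Lemma~\ref{L:GammaUni}) both sides become $\langle TT_{s_i}T_p^{t}h,g\rangle$, the right-hand side via $T_{s_{d-i}}^*TT_p=TT_{s_i}$; this is the same computation the paper performs against the basis $\{\bm a_{\bm p}\}$. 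Two small corrections to your write-up: a general vector of $L^2(b\Gamma_d)$ is not a \emph{finite} sum $\sum M_p^{-k}h_k$ with $h_k\in H^2(\mathbb G_d)$, only a limit of such vectors (density is all you need); and you should not call the extension $M_\varphi$ before the commutation step is done, since prior to invoking Corollary~\ref{polydisk-lauraoperator} it is only known to be a decomposable operator-valued multiplication, not multiplication by a scalar function.
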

\begin{proof}
Let us first prove the easier direction -- the `only if' part. Let $T$ be a Toeplitz operator with symbol $\varphi$. Then for $f,g\in H^2(\mathbb G_d)$,
\begin{eqnarray*}
\langle T_p^*T_\varphi T_p f,g \rangle = \langle T_\varphi T_p f,T_p g \rangle = \langle Pr M_\varphi T_p f, T_p g \rangle &=& \langle M_\varphi M_p f, M_p g \rangle =\langle M_\varphi  f,  g \rangle \\ &=& \langle Pr M_\varphi  f,  g \rangle = \langle T_\varphi  f,  g \rangle.
\end{eqnarray*}
Also for $i=1,2,\dots,d-1$, using Lemma \ref{L:GammaUni} we compute
\begin{eqnarray*}
\langle T_{s_i}^*T_\varphi T_p  f,  g \rangle_{H^2} &=& \langle Pr M_\varphi T_p  f,  T_{s_i} g \rangle_{H^2} = \langle M_\varphi M_p  f,  M_{s_i} g \rangle_{L^2} = \langle M_{s_i}^*M_p M_\varphi f, g \rangle_{L^2}
\\&=& \langle M_\varphi M_{s_{d-i}} f, g \rangle_{L^2} =\langle Pr M_\varphi M_{s_{d-i}} f, g \rangle_{H^2} =\langle T_\varphi T_{s_{d-i}} f, g \rangle_{H^2}.
\end{eqnarray*}

For the sufficient part, we work on $H^2_{\rm anti}(\mathbb D^d)$ and invoke Corollary \ref{equiv-Toep} to draw the conclusion. So, let $T$ be a bounded operator on $H^2_{\rm anti}(\mathbb D^d)$ that satisfies $ T_{s_i(\bm z)}^*TT_{s_d(\bm z)}=TT_{s_{d-i}(\bm z)}$ and $T_{s_d(\bm z)}^*TT_{s_d(\bm z)}=T$, where note that $s_i$ is the $i$-th symmetric function defined in (\ref{symm functions}). For a partition $\bm p=(p_1,p_2,\dots,p_d)$ in $\llbracket z \rrbracket$, note that for $r \geq 0$, $M_{s_d(\bm z)}^r\bm{a_p}=\bm{a_{p+r}}$, where $\bm r$ is the $d$-tuple $(r,r,\dots,r)$ and $\bm p+ \bm r =(p_1+r,p_2+r,\dots,p_d+r)$. {\em Therefore for every partition $\bm p$ in $\llbracket z \rrbracket$, there exists a sufficiently large $r$ (depending on $\bm p$) such that $M_{s_d(\bm z)}^n\bm{a_p} \in H^2_{\rm anti}(\mathbb D^d)$.} For each non-negative integer $r$, define an operator $T_r$ on $L^2_{\text{anti}}(\mathbb T^d)$ by
$$
T_r:={M^{* r}_{s_d(\bm z)}}TP_aM_{s_d(\bm z)}^r,
$$where $P_a$ is the orthogonal projection of $L^2_{\text{anti}}(\mathbb T^d)$ onto $H^2_{\text{anti}}(\mathbb D^d)$.
Let $\bm p$ and $\bm q$ be two partitions in $\llbracket z \rrbracket$, then for sufficiently large $r$, we have
\begin{eqnarray}\label{auxtoepcharcone}
\langle T_r\bm{a_p}, \bm{a_q} \rangle = \langle T M_{s_d(\bm z)}^r\bm{a_p}, M_{s_d(\bm z)}^r\bm{a_q} \rangle = \langle T \bm{a_{p+r}}, \bm{a_{q+r}} \rangle.
\end{eqnarray}
Since $T_{s_d(\bm z)}^*TT_{s_d(\bm z)}=T$, we have for every $r\geq 0$, $T_{s_d(\bm z)}^{*r}TT_{s_d(\bm z)}^r=T$. Let $\bm p$ and $\bm q$ be two partitions in $\llbracket n \rrbracket$, then for every $r\geq 0$,
\begin{eqnarray}\label{auxtoepcharctwo}
\langle T\bm{a_p}, \bm{a_q} \rangle = \langle T T_{s_d(\bm z)}^r\bm{a_p}, T_{s_d(\bm z)}^r \bm{a_q} \rangle=\langle T \bm{a_{p+r}}, \bm{a_{q+r}} \rangle.
\end{eqnarray}
Since $\{\bm{a_p}: \bm p\in \llbracket z \rrbracket\}$ is an orthogonal basis for $L^2_{\rm anti}(\mathbb T^d)$ and the sequence of operators $\{T_r\}$ on $L^2_{\rm anti}(\mathbb T^d)$ is uniformly bounded by $\|T\|$, by (\ref{auxtoepcharcone}) and $(\ref{auxtoepcharctwo})$ the sequence $\{T_r\}$ converges weakly to some operator $T_\infty$ (say) acting on $L^2_{\rm anti}(\mathbb T^d)$.

We now use Corollary \ref{polydisk-lauraoperator} to conclude that $T_\infty=M_\varphi$, for some $\varphi \in L^\infty_{\text{sym}}(\mathbb T^d)$. Therefore we have to show that $T_\infty$ commutes with each entry of $(M_{s_1(\bm z)},M_{s_2(\bm z)},\dots,M_{s_d(\bm z)})$. That $T_\infty$ commutes with $M_{s_d(\bm z)}$ is clear from the definition of $T_\infty$. The following computation shows that $T_\infty$ commutes with the other entries also. Let $\bm p$ and $\bm q$ be two partitions in $\llbracket z \rrbracket$ and $1\leq i \leq d-1$. Then
\begin{eqnarray*}
\langle M_{s_i(\bm z)}^*T_\infty^* \bm{a_p}, \bm{a_q} \rangle
&=& \lim_{r}\langle M_{s_i(\bm z)}^*M_{s_d(\bm z)}^{* r} T^*P_aM_{s_d(\bm z)}^r\bm{a_p}, \bm{a_q}\rangle
\\
&=& \lim_{r}\langle T_{s_i(\bm z)}^*T^*M_{s_d(\bm z)}^r\bm{a_p},M_{s_d(\bm z)}^r\bm{a_q} \rangle \;\;(\text{for sufficiently large $r$})
\\
&=& \lim_{r}\langle T_{s_d(\bm z)}^*T^*T_{s_{d-i}(\bm z)} M_{s_d(\bm z)}^r\bm{a_p},M_{s_d(\bm z)}^r\bm{a_q} \rangle \;\;(\text{applying (\ref{Toeplitzcharc})})
\\
&=& \lim_{r}\langle M_{s_d(\bm z)}^{* r+1}T^*P_aM_{s_d(\bm z)}^{r+1}M^*_{s_d(\bm z)}M_{s_{d-i}(\bm z)} \bm{a_p},\bm{a_q} \rangle
\\
&=& \lim_{r}\langle M_{s_d(\bm z)}^{* r+1}P_aT^*P_aM_{s_d(\bm z)}^{r+1}M_{s_i(\bm z)}^* \bm{a_p},\bm{a_q} \rangle \;\;(\text{by Lemma \ref{L:GammaUni}})
\\
&=& \langle T_\infty^*M_{s_i(\bm z)}^* \bm{a_p}, \bm{a_q} \rangle.
\end{eqnarray*}
Therefore there exists a $\varphi\in L^\infty_{\text{sym}}(\mathbb T^d)$ such that $T_\infty=M_\varphi$. Now for $f$ and $g$ in $H^2_{\text{anti}}(\mathbb D^d)$, we have
\begin{eqnarray*}
\langle P_aM_\varphi f,g \rangle = \langle M_\varphi f,g \rangle = \langle T_\infty f,g \rangle &=& \lim_{r}\langle T_r f,g \rangle
=\lim_{r} \langle T  T_{s_d(\bm z)}^r f,T_{s_d(\bm z)}^rg \rangle =  \langle Tf,g \rangle.
\end{eqnarray*}
Hence $T$ is the Toeplitz operator with symbol $\varphi$.
\end{proof}
The following is a straightforward consequence of Lemma \ref{L:GammaUni} and the characterization of Toeplitz operators obtained above.
\begin{corollary} \label{CommutesHenceToeplitz}
If $T$ is a bounded operator on $H^2(\mathbb G_d)$ that commutes with each entry of $(T_{s_1},\dots,T_{s_{d-1}}, T_p)$, then $T$ is a Toeplitz operator.
\end{corollary}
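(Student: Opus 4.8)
The plan is simply to verify that $T$ satisfies the Brown--Halmos relations \eqref{Toeplitzcharc} of Theorem \ref{T:BH}, after which that theorem does all the work. The first thing I would record is that $T_p$ is an isometry on $H^2(\mathbb G_d)$: by the commutative diagram following Lemma \ref{vector-valued-Hardy}, $T_p$ is unitarily equivalent to the unilateral shift $T_z$ of infinite multiplicity on $H^2_{\mathcal E}(\mathbb D)$, so $T_p^* T_p = I$. Since $T$ commutes with $T_p$ by hypothesis, this immediately gives $T_p^* T T_p = T_p^* T_p T = T$, which is the second of the relations in \eqref{Toeplitzcharc}.

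For the first family of relations, fix $i \in \{1,2,\dots,d-1\}$ and chain together three facts: $T$ commutes with $T_p$, then Lemma \ref{L:GammaUni} in the form $T_{s_i}^* T_p = T_{s_{d-i}}$, and finally that $T$ commutes with $T_{s_{d-i}}$ — which is legitimate since $d-i$ again lies in $\{1,2,\dots,d-1\}$. This yields
\[
T_{s_i}^* T T_p = T_{s_i}^* T_p T = T_{s_{d-i}} T = T T_{s_{d-i}}.
\]
So both relations in \eqref{Toeplitzcharc} hold, and Theorem \ref{T:BH} concludes that $T$ is a Toeplitz operator.

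There is essentially no obstacle here; all the substance is contained in Theorem \ref{T:BH} and Lemma \ref{L:GammaUni}, and the corollary is just the observation that a commutant of the full tuple $(T_{s_1},\dots,T_{s_{d-1}},T_p)$ automatically satisfies the characterizing identities. The only two points worth stating carefully are that $T_p$ is genuinely an isometry (so that the hypothesis $T_p T = T T_p$ upgrades to $T_p^* T T_p = T$ rather than merely $T_p^* T_p T = T_p^* T T_p$ with an uncontrolled right side) and that the index $d-i$ stays inside the range $\{1,\dots,d-1\}$ so that the commutation hypothesis is actually available for $T_{s_{d-i}}$.
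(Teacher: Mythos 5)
Your proof is correct and is exactly the argument the paper has in mind: the corollary is stated there as a ``straightforward consequence'' of Lemma \ref{L:GammaUni} and Theorem \ref{T:BH}, namely verifying the Brown--Halmos relations \eqref{Toeplitzcharc} from the commutation hypotheses, using that $T_p$ is an isometry and that $T_{s_i}^*T_p=T_{s_{d-i}}$ with $d-i\in\{1,\dots,d-1\}$. Nothing is missing.
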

\begin{definition}
The relations in \eqref{Toeplitzcharc} will be referred to as  the {\em Brown--Halmos relations}.
\end{definition}
We progress with basic properties of Toeplitz operators. It is a natural question whether any of the Brown--Halmos relations implies the other. We give here an example of an operator $Y$ which satisfies none other than the last one.
\begin{example} \label{FO}
We actually construct an example on the space $H^2_{\rm anti} (\mathbb D^d)$ and invoke the unitary equivalences we established in Section 3 to draw the conclusion. For each $1\leq j \leq d-1$, we define operators $Y_j$ on $H^2_{\text{anti}} (\mathbb D^d)$ by defining its action on the basis elements $\{\bm{a_p}:\bm p \in \llbracket n \rrbracket\}$ as
\begin{equation}\label{the-other-shift}
Y_j \bm{a_p} = \bm{a_{p+f_j}}, \text{ where } \bm{f_j}=(\underbrace{1,\dots,1}_{j\text{-times}},0\dots,0).
 \end{equation}
Note that $M_{s_d(\bm z)}\bm{a_p}=\bm{a_{p+f_d}}$ for every $\bm p\in \llbracket n \rrbracket$. Therefore each $Y_j$ commutes with $M_{s_d(\bm z)}|_{H^2_{\rm anti} (\mathbb D^d)}$. Thus by the algebraic relations in Lemma \ref{L:GammaUni}, it follows that for $Y_j$ to satisfy other Brown--Halmos relations, it is necessary for it to commute with each $T_{s_i(\bm z)}$. But we show below that $Y_j$ commutes with none of the $T_{s_i(\bm z)}$.

To that end let us fix some $j$ between $1$ and $d-1$. Choose the following partition
$$
\bm p=(p_1,\dots,p_{j-1},p_{j+1}+1,p_{j+1},\dots,p_d)
$$
in a way so that the difference between each entry with its next entry is strictly greater than one (except the $j$-th entry). For this $\bm p$, note that the difference between each adjacent entries of $\bm p+\bm f_j$ is greater than $2$. Hence we conclude that $T_{s_i(\bm z)}Y_j\bm{a_p}$ has exactly $n!$ many non-zero terms. On the other hand, note that the $i$-the elementary symmetrization function corresponds to the partition
$$\bm m^i=(\underbrace{1,\dots,1}_{i-\text{times}},0,\dots,0)$$
in the sense that $s_i=\bm s_{\bm m^i}(\bm z)$. Therefore $T_{s_i(\bm z)}\bm{a_p}=s_i\bm{a_p}=\bm s_{\bm m^i}\bm{a_p}=\sum_{\sigma\in \Sigma_d}\bm{a_{p+\bm m^i_\sigma}}$. Note that since $i<n$, we can choose a $\sigma$ form $\Sigma_d$ so that $\bm m^i_{\sigma}$ has $0$ and $1$ in its $j$-th and $(j+1)$-th entry, respectively. For this $\sigma$, it follows that $\bm{a}_{\bm p+\bm m^i_\sigma}=0$. Hence $T_{s_i(\bm z)}\bm{a_p}$ has at most $n!-1$ many non-zero terms and hence so does $Y_jT_{s_i(\bm z)}\bm{a_p}$. Therefore $Y_j$ and $T_{s_i(\bm z)}$ can not commute.
\end{example}

\begin{lemma}\label{L:Injection}
For $\varphi\in L^{\infty}(b\Gamma_d)$ if $T_\varphi$ is the zero operator, then $\varphi=0$, a.e. In other words, the map $\varphi \mapsto T_\varphi$ from $L^\infty (b\Gamma_d)$ into the set of all Toeplitz operators on the symmetrized polydisk, is injective.
\end{lemma}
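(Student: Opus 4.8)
\textbf{Proof proposal for Lemma~\ref{L:Injection}.}

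The plan is to pass to the model $H^2_{\operatorname{anti}}(\mathbb D^d)$. By Corollary~\ref{equiv-Toep}, writing $\psi=\varphi\circ\bm s\in L^\infty_{\operatorname{sym}}(\mathbb T^d)$, the operator $T_\varphi$ is unitarily equivalent to $T_\psi:=P_aM_\psi|_{H^2_{\operatorname{anti}}(\mathbb D^d)}$, so the hypothesis becomes $T_\psi=0$. Since $\Pi_{\bm s}$ is a $*$-isomorphism it is injective, so it suffices to show $\psi=0$ a.e.\ on $\mathbb T^d$; and since the trigonometric polynomial $\bm{a_p}$ is nonzero off a null set whenever $\bm p$ has pairwise distinct entries, it is in fact enough to prove the stronger assertion that the Laurent operator $M_\psi$ vanishes identically on $L^2_{\operatorname{anti}}(\mathbb T^d)$.

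To establish $M_\psi=0$ I would test against the orthogonal basis $\{\bm{a_p}:\bm p\in\llbracket z\rrbracket\}$ of $L^2_{\operatorname{anti}}(\mathbb T^d)$, exploiting the multiplication operator $M_{s_d(\bm z)}$ by $z_1z_2\cdots z_d$: this function is symmetric and unimodular on $\mathbb T^d$, so $M_{s_d(\bm z)}$ is a unitary of $L^2_{\operatorname{anti}}(\mathbb T^d)$ that commutes with $M_\psi$, and $M_{s_d(\bm z)}^r\bm{a_p}=\bm{a_{p+\bm r}}$ with $\bm r=(r,\dots,r)$, exactly the bookkeeping already used in the proof of Theorem~\ref{T:BH}. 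Fixing arbitrary $\bm p,\bm q\in\llbracket z\rrbracket$ and choosing $r$ large enough that both $\bm p+\bm r$ and $\bm q+\bm r$ lie in $\llbracket n\rrbracket$, we get
\begin{align*}
\langle M_\psi\bm{a_p},\bm{a_q}\rangle
=\langle M_\psi M_{s_d(\bm z)}^r\bm{a_p},M_{s_d(\bm z)}^r\bm{a_q}\rangle
=\langle M_\psi\bm{a_{p+\bm r}},\bm{a_{q+\bm r}}\rangle
=\langle P_aM_\psi\bm{a_{p+\bm r}},\bm{a_{q+\bm r}}\rangle
=\langle T_\psi\bm{a_{p+\bm r}},\bm{a_{q+\bm r}}\rangle=0,
\end{align*}
where the first equality uses unitarity of $M_{s_d(\bm z)}$ and its commuting with $M_\psi$, the third uses $\bm{a_{q+\bm r}}\in H^2_{\operatorname{anti}}(\mathbb D^d)$, and the last is the hypothesis. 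As $\{\bm{a_p}:\bm p\in\llbracket z\rrbracket\}$ spans $L^2_{\operatorname{anti}}(\mathbb T^d)$, this forces $M_\psi=0$.

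There is no real obstacle here; the only points to be careful about are that $M_{s_d(\bm z)}$ is genuinely a \emph{unitary} of $L^2_{\operatorname{anti}}(\mathbb T^d)$ commuting with $M_\psi$ (so conjugation by its powers leaves $M_\psi$ untouched), that a large enough common shift $\bm r$ carries any pair of partitions from $\llbracket z\rrbracket$ into $\llbracket n\rrbracket$ (so the projection $P_a$ can be inserted without cost, the second argument then lying in $H^2_{\operatorname{anti}}(\mathbb D^d)$), and that the final passage $M_\psi=0\Rightarrow\psi=0$ a.e.\ on $\mathbb T^d\Rightarrow\varphi=0$ $\nu$-a.e.\ is legitimate, which is just the injectivity of $\Pi_{\bm s}$ together with $\mu\ll d\bm\theta$. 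One could equally run the identical argument directly on $L^2(b\Gamma_d)$, using that $M_p$ is unitary there and that $\bigvee_{r\ge 0}M_p^{*r}H^2(\mathbb G_d)=L^2(b\Gamma_d)$; I prefer the $H^2_{\operatorname{anti}}(\mathbb D^d)$-model for the explicit orthogonal basis it supplies.
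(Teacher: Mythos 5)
Your argument is correct, but it is genuinely different from the one in the paper. The paper never passes to the Laurent operator: it expands $\varphi\circ\bm s$ in its Fourier series $\sum_{\bm m}\alpha_{\bm m}\bm z^{\bm m}$, computes the matrix entries $\langle T_{\varphi\circ\bm s}\bm{a_p},\bm{a_q}\rangle=d!\sum_{\sigma}\operatorname{sgn}(\sigma)\alpha_{\bm q_\sigma-\bm p}$ as in \eqref{Baapre!}, and then kills each coefficient $\alpha_{\bm m}$ by a combinatorial device: translating $\bm p,\bm q$ by $\bm n=((d-1)n,\dots,n,0)$ makes every off-identity term $\alpha_{(\bm q_\sigma-\bm p)+(\bm n_\sigma-\bm n)}$ a term of a square-summable sequence in $n$, while the identity term stays equal to $\alpha_{\bm m}$, forcing $\alpha_{\bm m}=0$. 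You instead prove the stronger statement that the whole multiplication operator $M_{\varphi\circ\bm s}$ vanishes on $L^2_{\operatorname{anti}}(\mathbb T^d)$, by conjugating with powers of the unitary $M_{s_d(\bm z)}$ to push any pair of partitions from $\llbracket z\rrbracket$ into $\llbracket n\rrbracket$ — exactly the shift device the paper uses in the sufficiency half of Theorem \ref{T:BH} — and then recover $\varphi\circ\bm s=0$ a.e. from $\psi\,\bm{a_p}=0$. Your route is shorter, avoids the Fourier-coefficient bookkeeping, and is the faithful analogue of the classical disk argument; the paper's route identifies the Fourier coefficients explicitly, and its computation \eqref{Baapre!} is recycled in the proof of Theorem \ref{T:AnaToep}, which is why the authors set it up this way. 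Two small points you should make explicit: the final step needs that $\bm{a_p}$ is nonzero off a Lebesgue-null set (the zero set of a nontrivial Laurent polynomial, e.g.\ the Vandermonde factor $\prod_{i<j}(z_i-z_j)$ for $\bm p=(d-1,\dots,1,0)$, has measure zero) — or you can bypass this entirely by noting that $M_{\varphi\circ\bm s}=0$ is unitarily equivalent, via Theorem \ref{equiv-Laurent}, to $M_\varphi=0$ on $L^2(b\Gamma_d)$, whose norm is the $\nu$-essential supremum of $\varphi$; and the passage from Lebesgue-a.e.\ vanishing of $\varphi\circ\bm s$ to $\nu$-a.e.\ vanishing of $\varphi$ uses $\mu\ll d\bm\theta$ and the definition of $\nu$ as a push-forward, which you do note.
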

\begin{proof}
Let $\varphi\circ \bm s(\bm z)=\sum_{\bm m\in\mathbb Z^d}\alpha_{\bm m}\bm z^{\bm m}\in L^\infty_{\text{sym}}(\mathbb T^d)$. Since $\varphi\circ\bm s$ is symmetric, the coefficients $\alpha_{\bm m}$ are such that $\alpha_{\bm m_\sigma}=\alpha_{\bm m}$ for every $\sigma\in\Sigma_d$. Also, $\varphi\circ\bm s$ in $L^2(\mathbb T^d)$ implies the coefficient sequence $\{\alpha_{\bm m}\}$ is square-summable. Suppose that $T_{\varphi\circ \bm s}$ on $H^2_{\text{anti}}(\mathbb D^d)$ is the zero operator. Then for every $\bm p$ and $\bm q$ in $\llbracket n\rrbracket $  we have
\begin{eqnarray}
0=\langle T_{\varphi\circ \bm s} \bm {a_p}, \bm{a_q}\rangle&=&\langle \sum_{\bm m\in\mathbb Z^d}\alpha_{\bm m}\bm {z^{m}}\sum_{\sigma\in\Sigma_d}\operatorname{sgn}(\sigma)\bm z^{\bm p_\sigma},\sum_{\delta\in\Sigma_d}\operatorname{sgn}(\delta)\bm z^{\bm q_\delta} \rangle\notag\\
&=&\langle \sum_{\bm m\in\mathbb Z^d,\sigma\in\Sigma_d}\operatorname{sgn}(\sigma)\alpha_{\bm m}\bm z^{\bm m+\bm p_\sigma}, \sum_{\delta\in\Sigma_d}\operatorname{sgn}(\delta)\bm z^{\bm q_\delta}\rangle \notag\\
&=& \sum_{\bm m\in\mathbb Z^d,\sigma,\delta\in\Sigma_d}\operatorname{sgn}(\sigma)\operatorname{sgn}(\delta)\alpha_{\bm m}\bm \langle z^{\bm m+\bm p_\sigma},\bm z^{\bm q_\delta} \rangle \notag\\
&=& \sum_{\sigma,\delta\in\Sigma_d}\operatorname{sgn}(\sigma)\operatorname{sgn}(\delta)\alpha_{\bm q_\delta-\bm p_\sigma}\notag\\
&=& \sum_{\sigma,\delta\in\Sigma_d}\operatorname{sgn}(\sigma)\operatorname{sgn}(\delta)\alpha_{(\bm q_{\sigma^{-1}\delta}-\bm p)_\sigma}\notag\\
&=& \sum_{\sigma,\delta\in\Sigma_d}\operatorname{sgn}(\sigma)\operatorname{sgn}(\delta)\alpha_{\bm q_{\sigma^{-1}\delta}-\bm p}
= d!\sum_{\sigma\in\Sigma_d}\operatorname{sgn}(\sigma)\alpha_{\bm q_{\sigma}-\bm p}\label{Baapre!}.
\end{eqnarray}
Our claim is that the above sum being zero for every $\bm p$ and $\bm q$ in $\llbracket n\rrbracket$ implies $\alpha_{\bm m}=0$ for every $\bm m\in\mathbb Z^d$. Since $\alpha_{\bm m}$ remains the same for different permutation of $\bm m$, without loss of generality, we choose an $\bm m=(m_1,m_2,\dots,m_d)$ such that
$$m_1\leq m_2\leq \cdots \leq m_d\leq r \;(\text{say}).$$ By choice of $r$, one can assume it to be bigger or equal to $-1$. Choose $\bm q=(r+d,\dots,r+1)$ and $\bm p= \bm q - \bm m$. Then both $\bm p$ and $\bm q$ are in $\llbracket n\rrbracket$. With this choice we have from the above inner product computation that
$$
\alpha_{\bm m}= - \sum_{e\neq\sigma\in\Sigma_d}\operatorname{sgn}(\sigma)\alpha_{\bm q_{\sigma}-\bm p},
$$
where $e$ is the identity permutation. Fix a positive integer $n$ and consider the following element in $\llbracket n\rrbracket$
$$
\bm n= ((d-1)n,\dots,n,0).
$$Applying \eqref{Baapre!} to the choices $\bm p_n=\bm p+\bm n$ and $\bm q_n=\bm q+\bm n$, we obtain
\begin{align}\label{se-ki!}
\alpha_m=-\sum_{e\neq\sigma\in\Sigma_d} \operatorname{sgn}(\sigma)\alpha_{(\bm q_{\sigma}-\bm p) + (\bm n_\sigma -\bm n)}.
\end{align}Note that if $\sigma\neq e$, then at least one entry of $\bm n_\sigma -\bm n$ is non-zero and therefore for a fixed $\sigma\neq e$,
$$
\beta(\sigma,n):=\alpha_{(\bm q_{\sigma}-\bm p) + (\bm n_\sigma -\bm n)}
$$is a square-summable sequence. And therefore the linear combination
$$
\gamma_n:= -\sum_{e\neq\sigma\in\Sigma_d} \operatorname{sgn}(\sigma)\beta(\sigma,n)
$$must also be square-summable. But from \eqref{se-ki!} we see that for every $n\geq1$, $\gamma_n=\alpha_{\bm m}$. This implies that $\alpha_{\bm m}=0$ which was to prove.

\end{proof}
It is easy to see that the space $H^\infty(\mathbb G_d)$ consisting of all bounded analytic functions on $\mathbb G_d$ is contained in $H^2(\mathbb G_d)$. We identify $H^\infty(\mathbb G_d)$ with its boundary functions. In other words,
$$
H^\infty(\mathbb G_d)=\{\varphi\in L^\infty(b\Gamma_d): \varphi\circ \bm s \text{ has no negative Fourier coefficients}\}
$$
\begin{definition}
A Toeplitz operator with symbol $\varphi$ is called an {\em{analytic Toeplitz operator}} if $\varphi$ is in $H^\infty(\mathbb G_d)$.
A Toeplitz operator with symbol $\varphi$ is called a {\em{co-analytic Toeplitz operator}} if $T_\varphi^*$ is an analytic Toeplitz operator.
\end{definition}
Our next goal is to characterize analytic Toeplitz operators. But to be able to do that we need to define the following notion and the proposition following it.
\begin{definition}
Let $\varphi$ be in $L^\infty(b\Gamma_d)$. The operator $H_\varphi:H^2(\mathbb G_d)\to L^2(b\Gamma_d)\ominus H^2(\mathbb G_d)$ defined by
$$
H_\varphi f = (I -Pr)M_\varphi f
$$for all $f\in H^2(\mathbb G_d)$, is called a Hankel operator.
\end{definition}
We write down a few observations about Toeplitz operators some of which will be used in the theorem following it. The proofs are not written because they go along the same line as in the case of the unit disk.

\begin{proposition} \label{EasyObs} Let $\varphi \in L^\infty(b\Gamma_d)$. Then
\begin{enumerate}
\item $T_\varphi^* = T_{\overline{\varphi}}$.
\item If $\psi \in L^\infty(b\Gamma_d)$ is another function, then the product $T_\varphi T_\psi$ is another Toeplitz operator if $\overline{\varphi}$ or $\psi$ is analytic. In each case, $T_\varphi T_\psi = T_{\varphi \psi}$.
\item If $\psi \in L^\infty(b\Gamma_d)$, then $T_\varphi T_\psi - T_{\varphi \psi}=-H_{\overline{\varphi}}^*H_\psi.$
\item If $T_\varphi$ is compact, then $\varphi=0$.
\end{enumerate}
\end{proposition}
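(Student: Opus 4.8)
The plan is to transcribe, with only cosmetic changes, the classical proofs for the Hardy space of the disk; the only genuinely $\mathbb G_d$-specific inputs are Lemma~\ref{L:GammaUni}, the injectivity of $\varphi\mapsto T_\varphi$ (Lemma~\ref{L:Injection}), and the description of $T_p$ as a shift of infinite multiplicity (Lemma~\ref{vector-valued-Hardy}). For part~(1) I would use $M_\varphi^\ast=M_{\overline\varphi}$ on $L^2(b\Gamma_d)$: for $f,g\in H^2(\mathbb G_d)$, $\langle T_\varphi f,g\rangle=\langle M_\varphi f,g\rangle=\langle f,M_{\overline\varphi}g\rangle=\langle f,Pr\,M_{\overline\varphi}g\rangle=\langle f,T_{\overline\varphi}g\rangle$, so $T_\varphi^\ast=T_{\overline\varphi}$.

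For part~(3), which I would prove first, the key observation is that $Pr\,M_\varphi$ restricted to $L^2(b\Gamma_d)\ominus H^2(\mathbb G_d)$ equals $H_{\overline\varphi}^\ast$: for $k\perp H^2(\mathbb G_d)$ and $h\in H^2(\mathbb G_d)$ one has $\langle H_{\overline\varphi}^\ast k,h\rangle=\langle k,(I-Pr)M_{\overline\varphi}h\rangle=\langle k,M_{\overline\varphi}h\rangle=\langle M_\varphi k,h\rangle=\langle Pr\,M_\varphi k,h\rangle$, the second equality because $k$ is orthogonal to $Pr\,M_{\overline\varphi}h\in H^2(\mathbb G_d)$. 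Then, writing $M_\psi f=T_\psi f+H_\psi f$ for $f\in H^2(\mathbb G_d)$ and applying $Pr\,M_\varphi$, we get $T_\varphi T_\psi f=Pr\,M_\varphi M_\psi f-Pr\,M_\varphi H_\psi f=T_{\varphi\psi}f-H_{\overline\varphi}^\ast H_\psi f$, which is~(3). Part~(2) then drops out: if $\psi\in H^\infty(\mathbb G_d)$ then $M_\psi$ leaves $H^2(\mathbb G_d)$ invariant (in the copy $H^2_{\rm anti}(\mathbb D^d)$, the product of a symmetric $H^\infty$ function and an antisymmetric $H^2$ function is again antisymmetric and in $H^2(\mathbb D^d)$), hence $H_\psi=0$; and if $\overline\varphi\in H^\infty(\mathbb G_d)$ then $H_{\overline\varphi}=0$; in either case~(3) collapses to $T_\varphi T_\psi=T_{\varphi\psi}$.

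For part~(4), assume $T_\varphi$ compact. The ``only if'' part of Theorem~\ref{T:BH} shows $T_\varphi$ satisfies the Brown--Halmos relations, in particular $T_p^\ast T_\varphi T_p=T_\varphi$, whence $T_p^{\ast n}T_\varphi T_p^{\,n}=T_\varphi$ for all $n\ge 0$. By Lemma~\ref{vector-valued-Hardy} and the diagram following it, $T_p$ is unitarily equivalent to the unilateral shift of infinite multiplicity on $H^2_{\mathcal E}(\mathbb D)$, so $T_p^{\,n}\to 0$ in the weak operator topology and $\|T_p^{\ast n}\|\le 1$. Since $T_\varphi$ is compact, $T_\varphi T_p^{\,n}f\to 0$ in norm for every $f$, hence $\|T_p^{\ast n}T_\varphi T_p^{\,n}f\|\le\|T_\varphi T_p^{\,n}f\|\to 0$, i.e.\ $T_p^{\ast n}T_\varphi T_p^{\,n}\to 0$ strongly. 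Comparing with $T_p^{\ast n}T_\varphi T_p^{\,n}=T_\varphi$ forces $T_\varphi=0$, and Lemma~\ref{L:Injection} then gives $\varphi=0$.

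All of this is routine given the earlier machinery; the one place where some care is needed is the input to~(4) — namely that $T_p$ is a \emph{pure} shift with no unitary part — which is exactly what Lemma~\ref{vector-valued-Hardy} supplies, together with the verification in~(2) that analyticity of the symbol kills the relevant Hankel operator, which uses only the antisymmetry bookkeeping already set up in Section~\ref{S:HardyG}.
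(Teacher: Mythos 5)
Your proposal is correct and is exactly what the paper intends: the paper omits the proof, stating only that the arguments ``go along the same line as in the case of the unit disk,'' and your write-up is precisely that classical transcription, using the identity $Pr\,M_\varphi|_{H^2(\mathbb G_d)^\perp}=H_{\overline\varphi}^*$ for (1)--(3), and for (4) the Brown--Halmos relation $T_p^{*n}T_\varphi T_p^{\,n}=T_\varphi$ together with the purity of $T_p$ (Lemma \ref{vector-valued-Hardy}) and the injectivity of the symbol map (Lemma \ref{L:Injection}). No gaps; the cited earlier results all precede the proposition, so there is no circularity.
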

\subsection{Characterizations of analytic Toeplitz operators}
Now we are ready to characterize Toeplitz operators with analytic symbols. In the proof of this result, we make use of the following notation:
$$
[z]=\{\bm m=(m_1,m_2,\dots,m_d):m_1\geq m_2\geq\cdots\geq m_d\}.
$$Note the difference between $[z]$ and $\llbracket z \rrbracket$, defined in \S \ref{S:HardyG}. For an $\bm m \in[z]$, we denote by $\bm s_{\bm m}$, the symmetrization of the monomial $\bm z^{\bm m}$, i.e.,
$$
\bm s_{\bm m}(\bm z)=\sum_{\sigma\in \Sigma_d}\bm z^{\bm m_\sigma}.
$$It is easy to check that the set $\{\bm s_{\bm m}:\bm m \in [z]\}$ is an orthogonal basis of $L^2_{sym}(\mathbb T^d)$, the space consisting of the symmetric functions in $L^2(\mathbb T^d)$.
\begin{thm}\label{T:AnaToep}
Let $T_\varphi$ be a Toeplitz operator. Then the following are equivalent:
\begin{enumerate}
\item[(i)] $T_\varphi$ is an analytic Toeplitz operator;
\item[(ii)] $T_\varphi$ commutes with $T_p$;
\item[(iii)]$T_\varphi(Ran T_p)\subseteq Ran T_p$;
\item[(iv)] $T_pT_\varphi$ is a Toeplitz operator;
\item[(v)] For each $1\leq i \leq d-1$, $T_\varphi$ commutes with $T_{s_i}$;
\item[(vi)]For each $1\leq i \leq d-1$, $T_{s_i}T_\varphi$ is a Toeplitz operator.
\end{enumerate}
\end{thm}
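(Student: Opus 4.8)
Throughout, $T_\varphi$ is the given Toeplitz operator, so the Brown--Halmos relations of Theorem \ref{T:BH} and the identities of Proposition \ref{EasyObs} are available. The plan is to prove
\[(\mathrm{i})\Rightarrow(\mathrm{ii})\Rightarrow(\mathrm{iii})\Rightarrow(\mathrm{i}),\qquad (\mathrm{i})\Rightarrow(\mathrm{iv})\Rightarrow(\mathrm{ii}),\qquad (\mathrm{i})\Rightarrow(\mathrm{vi})\Rightarrow(\mathrm{v})\Rightarrow(\mathrm{i}),\]
which makes all six statements equivalent. Most arrows are immediate. Since $\varphi$ and each coordinate function $s_1,\dots,s_{d-1},p$ is analytic, Proposition \ref{EasyObs}(2) gives $T_\varphi T_p=T_{\varphi p}=T_pT_\varphi$ and $T_\varphi T_{s_i}=T_{\varphi s_i}=T_{s_i}T_\varphi$, and since $T_pT_\varphi$ and $T_{s_i}T_\varphi$ are then Toeplitz, this settles (i)$\Rightarrow$(ii), (iv), (v), (vi). The implication (ii)$\Rightarrow$(iii) is just $T_\varphi(\operatorname{Ran}T_p)=T_p\big(T_\varphi H^2(\mathbb G_d)\big)\subseteq\operatorname{Ran}T_p$. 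For (iii)$\Rightarrow$(ii) recall that $T_p$ is an isometry (Lemma \ref{vector-valued-Hardy}): if $T_\varphi T_p$ maps into $\operatorname{Ran}T_p$ then $T_\varphi T_p=T_p(T_p^*T_\varphi T_p)$, and the Brown--Halmos relation $T_p^*T_\varphi T_p=T_\varphi$ gives $T_\varphi T_p=T_pT_\varphi$. For (iv)$\Rightarrow$(ii): if $T_pT_\varphi=T_\psi$ then $T_p^*T_\psi T_p=T_\psi$ reads $T_\varphi T_p=T_pT_\varphi$. For (vi)$\Rightarrow$(v): if $T_{s_i}T_\varphi=T_\psi$ then, using $T_p^*T_{s_i}=T_{s_{d-i}}^*$ (Lemma \ref{L:GammaUni}), the relation $T_p^*T_\psi T_p=T_\psi$ becomes $T_{s_{d-i}}^*T_\varphi T_p=T_{s_i}T_\varphi$; comparing with the Brown--Halmos relation $T_{s_{d-i}}^*T_\varphi T_p=T_\varphi T_{s_i}$ gives $T_\varphi T_{s_i}=T_{s_i}T_\varphi$ for every $i$.

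The first substantial step is (iii)$\Rightarrow$(i), which I would carry out in the vector-valued model of Lemma \ref{vector-valued-Hardy}. The unitary $U_2$ takes $(H^2(\mathbb G_d),T_p)$ to $(H^2_{\mathcal E}(\mathbb D),T_z)$, $T_z$ the shift of infinite multiplicity; writing each basis vector $\bm{a_m}$, $\bm m\in\llbracket z\rrbracket$, as $(z_1\cdots z_d)^{m_d}$ times a basis vector of $\mathcal E$ extends $U_2$ to a unitary $L^2(b\Gamma_d)\to L^2_{\mathcal E}(\mathbb T)$ carrying $M_p$ to the bilateral shift and restricting to $U_2$ on $H^2(\mathbb G_d)$. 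As $M_\varphi$ commutes with $M_p$, it is carried to multiplication by an operator-valued symbol $\Phi\in L^\infty_{\mathcal B(\mathcal E)}(\mathbb T)$, and $T_\varphi$ to the block Toeplitz operator $T_\Phi$ on $H^2_{\mathcal E}(\mathbb D)$, with block matrix $(\Phi_{i-j})_{i,j\ge0}$. The hypothesis that $T_\Phi$ maps $zH^2_{\mathcal E}(\mathbb D)$ into itself forces, by inspection of that matrix, $\Phi_k=0$ for all $k<0$, i.e.\ $\Phi\in H^\infty_{\mathcal B(\mathcal E)}(\mathbb D)$. Pulling back, $M_\varphi$ leaves $H^2(\mathbb G_d)$ invariant; in particular $(\varphi\circ\bm s)\,\bm{a_\delta}\in H^2_{\rm anti}(\mathbb D^d)$, where $\bm{a_\delta}$, $\delta=(d-1,\dots,1,0)$, is the Vandermonde (equivalently $J_{\bm s}$ up to sign). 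Dividing by $\bm{a_\delta}$ — every antisymmetric holomorphic function on $\mathbb D^d$ is divisible by the Vandermonde — shows $\varphi\circ\bm s$ is holomorphic on $\mathbb D^d$, and, being a bounded symbol, $\varphi\in H^\infty(\mathbb G_d)$. (Alternatively one can run the matched-Fourier-coefficient argument below directly with the isometry $T_{s_d(\bm z)}$ in place of the $T_{s_i(\bm z)}$; that route avoids the vector-valued machinery but is longer.)

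The main obstacle is (v)$\Rightarrow$(i): commuting with all of $T_{s_1},\dots,T_{s_{d-1}}$ must force analyticity even though these operators are neither isometries nor jointly generate a shift. I would transfer to $H^2_{\rm anti}(\mathbb D^d)$ by Corollary \ref{equiv-Toep}, write $\varphi\circ\bm s=\sum_{\bm m\in\mathbb Z^d}\alpha_{\bm m}\bm z^{\bm m}$ with $\alpha$ symmetric and square-summable, and argue as in Lemma \ref{L:Injection}. Two ingredients enter. First, the Pieri-type rule $T_{s_i(\bm z)}\bm{a_p}=\sum_{\epsilon}\bm{a_{p+\epsilon}}$, the sum over $0$--$1$ vectors $\epsilon$ of weight $i$ with $\bm p+\epsilon$ strictly decreasing; in particular $T_{s_i(\bm z)}^*\bm{a_\delta}=0$ for every $i$, since the monomials of $\overline{s_i}\,\bm{a_\delta}$ all have total degree $|\delta|-i$ whereas every basis element $\bm{a_p}$, $\bm p\in\llbracket n\rrbracket$, has total degree $\ge|\delta|$. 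Second, the iterated Brown--Halmos relation $T_{s_d(\bm z)}^{*r}TT_{s_d(\bm z)}^r=T$, which (as in the proof of Theorem \ref{T:BH}) gives the shift-invariance $\langle T\bm{a_p},\bm{a_q}\rangle=\langle T\bm{a_{p+\mathbf1}},\bm{a_{q+\mathbf1}}\rangle$. Pairing the commutation identity $T\big(s_i(\bm z)\bm{a_p}\big)=s_i(\bm z)(T\bm{a_p})$ against $\bm{a_q}$ — using that $T_{s_i(\bm z)}$ is multiplication by $s_i(\bm z)$ on $H^2_{\rm anti}(\mathbb D^d)$ and $P_a(\overline{s_i}\,\bm{a_q})=T_{s_i(\bm z)}^*\bm{a_q}$ — expresses $\sum_\epsilon\langle T\bm{a_{p+\epsilon}},\bm{a_q}\rangle$ through $\langle T\bm{a_p},T_{s_i(\bm z)}^*\bm{a_q}\rangle$; taking $\bm q=\delta$ makes the right side vanish, and, together with the shift-invariance and the explicit Toeplitz formula for $\langle T\bm{a_p},\bm{a_q}\rangle$ in terms of the $\alpha_{\bm m}$ (cf.\ the computation in Lemma \ref{L:Injection}), one obtains a family of linear relations among the $\alpha_{\bm m}$. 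To finish, fix a candidate coefficient $\alpha_{\bm m}$ with $\bm m\not\ge\mathbf0$; the relations express it as a fixed entry of the sequence obtained by translating all indices by $\bm n=((d-1)n,\dots,n,0)$, $n\to\infty$, and since each such translation pushes the off-diagonal indices to infinity, that sequence is square-summable, forcing $\alpha_{\bm m}=0$. The delicate point — where the work actually lies — is the combinatorial bookkeeping of which $\epsilon$'s survive the Pieri expansions and which translated indices recur, so that after translation the diagonal term reconstitutes precisely $\alpha_{\bm m}$ while the others disperse. Once this is in place, $\varphi\circ\bm s$ has no negative Fourier coefficients, so $\varphi\in H^\infty(\mathbb G_d)$, and the cycle is closed.
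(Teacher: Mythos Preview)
Your logical scheme and all the short implications (including (iii)$\Rightarrow$(ii), (iv)$\Rightarrow$(ii), (vi)$\Rightarrow$(v)) are correct and agree with the paper's. For the hard direction (v)$\Rightarrow$(i) your sketch is essentially the paper's approach: the paper makes it concrete via the Hankel identity $H_{\overline{s_i}}^{\,*}H_{\varphi\circ\bm s}=0$ and a choice of $\bm q=(q_1,\dots,q_{d-1},0)\in\llbracket n\rrbracket$ with consecutive entries differing by $1$, so that $\overline{s_i}\,\bm a_{\bm q}$ has exactly \emph{one} non-analytic summand $\bm a_{\bm q'}$; pairing $\langle H_{\varphi\circ\bm s}\bm a_{\bm p},\bm a_{\bm q'}\rangle=0$ then collapses to the formula \eqref{Baapre!} of Lemma \ref{L:Injection}, and the same translation-by-$\bm n$ trick finishes. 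Your observation $T_{s_i(\bm z)}^*\bm a_\delta=0$ is the special case $\bm q=\delta$ of this choice, so the ``delicate combinatorial bookkeeping'' you flag is exactly what the paper's choice of $\bm q$ is designed to eliminate.

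For (ii)/(iii)$\Rightarrow$(i) you take a genuinely different route from the paper, which instead uses $H_{\bar p}^{\,*}H_\varphi=0$ and the same Fourier-coefficient/translation argument. Your block-Toeplitz reduction in the vector-valued model is cleaner and correctly yields that $M_\varphi$ preserves $H^2(\mathbb G_d)$, equivalently $(\varphi\circ\bm s)\,\bm a_\delta\in H^2_{\rm anti}(\mathbb D^d)$. The final step, however, has a real gap. You infer that the quotient by $\bm a_\delta$ is holomorphic on $\mathbb D^d$ and then, because the symbol is bounded, that $\varphi\in H^\infty(\mathbb G_d)$. But ``holomorphic on $\mathbb D^d$ with bounded radial limits'' does not imply membership in $H^\infty$ --- already on the disk, $\exp\!\big((1+z)/(1-z)\big)$ has radial limit $0$ a.e.\ yet is unbounded. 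What you must show is that the \emph{boundary} function $\varphi\circ\bm s\in L^\infty(\mathbb T^d)$ has no negative Fourier coefficients, and merely knowing it agrees a.e.\ with the radial limit of some holomorphic function does not give this. One way to close the gap: for a.e.\ fixed $(z_2,\dots,z_d)\in\mathbb T^{d-1}$ the slice $z_1\mapsto(\varphi\circ\bm s)\bm a_\delta$ lies in $H^2(\mathbb D)$, and $\bm a_\delta$ as a polynomial in $z_1$ has all its roots on $\mathbb T$, hence is outer; thus the quotient lies in $N^+\cap L^\infty(\mathbb T)=H^\infty(\mathbb D)$ by Smirnov's theorem, so the partial Fourier coefficients of $\varphi\circ\bm s$ in $z_1$ vanish for negative index, and by symmetry all negative Fourier coefficients vanish. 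Without an argument of this sort (or a reversion to the paper's direct Fourier computation, which your parenthetical ``alternative'' in effect is), the Vandermonde-division step does not go through.
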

\begin{proof}
\underline{$(i)\Leftrightarrow(ii)$:} The implication $(i)\Rightarrow (ii)$ is easy. To prove the other direction, we
note that if $T_\varphi$ commutes with $T_p$, then by part $(3)$ of Proposition \ref{EasyObs} we have $H_{\overline{p}}^*H_\varphi=0$. This shows that the corresponding product of Hankel operators on $H^2_{\text{anti}}(\mathbb D^d)$ is also zero, i.e.,
$H_{\overline{p}}^*H_{\varphi\circ \bm s}=0$. Let the power series expansion of $\varphi\circ \bm s \in L^\infty_\text{sym}(\mathbb T^d)$ be
$$
\varphi\circ \bm s(\bm z)=\sum_{\bm m \in \mathbb Z^d}\alpha_{\bm m} \bm z^m
$$with the convention that $\alpha_{\bm m_\sigma}=\alpha_{\bm m}$ for every $\sigma\in\Sigma_d$. Let $\bm p=(p_1,p_2,\dots,p_d)$ and $\bm q=(q_1,\dots,q_{d-1},0)$ be in $\llbracket n \rrbracket$. Then by a similar computation as done in \eqref{Baapre!} we have
\begin{eqnarray}
0=\langle H_{\varphi\circ \bm s} \bm {a_p}, H_{\overline{p}}\bm {a_q} \rangle_{L^2(\mathbb T^d)}&=&
\langle \sum_{\bm{m} \in [z]}\alpha_{\bm m} \bm z^{\bm m}\bm {a_p}, \bm a_{\bm q'}\rangle_{L^2(\mathbb T^d)}\;\;\;[\bm q':=\bm q-(1,1,\dots,1)]\notag\\
&=& d!\sum_{\sigma\in\Sigma_d}\operatorname{sgn}(\sigma)\alpha_{\bm q'_{\sigma}-\bm p}.\label{Baapre2!}
\end{eqnarray}
Since the last entry of $\bm q'$ is $-1$, at least one entry of $\bm q'_\sigma -\bm p$ is negative for every $\bm p \in\llbracket n\rrbracket$ and $\sigma\in\Sigma_d$. Now following a similar line of analysis done in the proof of Lemma \ref{L:Injection}, one can conclude from \ref{Baapre2!} that $\alpha_{\bm m}=0$ whenever $\bm m \in [z]$ with the last entry negative. Hence by the symmetric property of $\varphi\circ\bm s$, $\alpha_{\bm m}=0$, if any coordinates of $\bm m$ is negative, in other words, $\varphi$ is analytic in $\mathbb G_d$.

\underline{$(ii)\Leftrightarrow (iii)$:} The part $(ii)\Rightarrow(iii)$ is easy. Conversely, suppose that $RanT_p$ is invariant under $T_\varphi$. Since $RanT_p$ is closed, we have for every $f\in H^2(\mathbb G_d)$,
\begin{eqnarray*}
&&T_\varphi T_p f=T_p g_f \text{ for some $g_f$ in $H^2(\mathbb G_d)$.}
\\
&\Rightarrow& T_p^*T_\varphi T_p f=g_f \Rightarrow T_\varphi f =g_f \;(\text{by the Brown--Halmos relations \eqref{Toeplitzcharc}}).
\end{eqnarray*} Hence $T_\varphi T_p=T_p T_\varphi$.

\underline{$(ii)\Leftrightarrow (iv)$:} If $T_\varphi$ commutes with $T_p$, then $T_pT_\varphi$ is same as $T_\varphi T_p$, which is a Toeplitz operator by Proposition \ref{EasyObs}. Conversely, if $T_pT_\varphi$ is a Toeplitz operator, then it satisfies Brown--Halmos relations, the second one of which implies that $T_\varphi$ commutes with $T_p$.

\underline{$(i)\Leftrightarrow(v)$:} For an analytic symbol $\varphi$, $T_\varphi$ obviously commutes with each $T_{s_i}$. The proof of the converse direction is done by the same technique as in the proof of $(ii)\Rightarrow (i)$. If $T_\varphi$ commutes with $T_{s_i}$, then by part $(3)$ of Proposition \ref{EasyObs} we have $H_{\overline{s_i}}^*H_\varphi=0$. Suppose $\varphi\circ \bm s \in L^\infty_\text{sym}(\mathbb T^d)$ has the following power series expansion
$$\varphi\circ \bm s(\bm z)=\sum_{\bm m \in\mathbb Z^d}\alpha_{\bm m} \bm z^{\bm m}.$$
Let $\bm q=(q_1,\dots,q_{d-1},0)$ and $\bm p=(p_1,p_2,\dots,p_d)$ be in $\llbracket n \rrbracket$ such that $q_{j}-q_{j-1}=1$ for every $2\leq j \leq d-1$. Note that for this choice of $\bm q$, the only non analytic term in the expression of $\overline{s_i}\bm{a_q}(\bm z)$ is $\bm {a_{q'}}$, where
 $$
\bm q'=\bm q-(0,\dots,0,\underbrace{1,\dots,1}_{i-\text{times}}).
 $$A similar computation as in \eqref{Baapre!} yields
\begin{eqnarray*}
0=\langle H_{\varphi\circ \bm s} \bm{a_p}, H_{\overline{s_i}}\bm{a_q} \rangle_{L^2(\mathbb T^d)}
=\langle H_{\varphi\circ \bm s} \bm{a_p}, \bm{a_{q'}} \rangle_{L^2(\mathbb T^d)}
= d!\sum_{\sigma\in\Sigma_d}\operatorname{sgn}(\sigma)\alpha_{\bm q'_{\sigma}-\bm p}. \label{Baapre3!}
\end{eqnarray*}
By a similar analysis as in the proof of $(ii)\Rightarrow(i)$ above, one can conclude that $\alpha_{\bm m}=0$ whenever $\bm m$ has at least one negative entry. This proves that $\varphi$ is analytic.\\
\underline{$(v)\Leftrightarrow (vi)$:} The implication $(v)\Rightarrow (vii)$ follows from Proposition \ref{EasyObs}. Conversely suppose that $T_{s_i}T_\varphi$ is a Toeplitz operator. Therefore applying Theorem \ref{T:BH} and Lemma \ref{L:GammaUni}, we get $T_\varphi T_{s_i}={T^*_{s_{d-i}}}T_\varphi T_p={T_p}^*T_{s_i}T_\varphi T_p=T_{s_i}T_\varphi.$
\end{proof}
\begin{remark}
Since adjoint of an analytic Toeplitz operator is a co-analytic Toeplitz operator and vice versa, Theorem \ref{T:AnaToep} in turn characterizes co-analytic Toeplitz operators as well.
\end{remark}
%
%

\section{Compact perturbation of Toeplitz operators}\label{S:asympT}
In this section we first find a characterization of compact operators on $H^2(\mathbb G_d)$ and then use it to characterize compact perturbation of Toeplitz operators -- the so called asymptotic Toeplitz operators.

Note that for a bounded operator $B$ on $H^2(\mathbb D)$, $B$ is a Toeplitz operator if and only if there exists $T$ on $H^2(\mathbb D)$ such that ${T_z^*}^nTT_z^n\to B$ weakly on $H^2(\mathbb D)$. Although $T_p$ of $H^2(\mathbb G_d)$ is unitarily equivalent to $T_z$ on $H^2_\mathcal E(\mathbb D)$, a similar convergence like above does not guarantee that the limit is a Toeplitz operator, see Example \ref{FO}. A necessary and sufficient condition for the limit to be Toeplitz is given below.
\begin{lemma}\label{cond-limit-Toepitz}
Let $T$ and $B$ be bounded operators on $H^2(\mathbb G_d)$ such that ${T_p^*}^nTT_p^n\to B$ weakly. Then $B$ is Toeplitz operator if and only if for each $1\leq i\leq d-1$
$$
{T_p^*}^n[T,T_{s_i}]T_p^n \to 0 \text{ weakly,}
$$
where $[T,T_{s_i}]$ denotes the commutator of $T$ and $T_{s_i}$.
\end{lemma}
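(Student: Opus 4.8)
The plan is to verify the two Brown--Halmos relations \eqref{Toeplitzcharc} for $B$ directly from the weak convergence hypothesis ${T_p^*}^nTT_p^n\to B$, using the relation $T_{s_i}^*T_p=T_{s_{d-i}}$ from Lemma \ref{L:GammaUni} as the bridge. The second Brown--Halmos relation, $T_p^*BT_p=B$, is free of any hypothesis on the commutators: for $f,g\in H^2(\mathbb G_d)$ one writes $\langle T_p^*BT_pf,g\rangle=\langle B T_pf,T_pg\rangle=\lim_n\langle {T_p^*}^nTT_p^n T_pf,T_pg\rangle=\lim_n\langle {T_p^*}^{n+1}TT_p^{n+1}f,g\rangle=\langle Bf,g\rangle$, using that $\{{T_p^*}^nTT_p^n\}$ is uniformly bounded by $\|T\|$ and that weak convergence along $\mathbb N$ is unaffected by an index shift. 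So the whole content is the first family of relations $T_{s_i}^*BT_p=BT_{s_{d-i}}$, and the claim is that these are equivalent to the stated commutator condition.

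First I would compute, for fixed $i$ and for $f,g\in H^2(\mathbb G_d)$,
\[
\langle BT_{s_{d-i}}f,g\rangle=\lim_n\langle {T_p^*}^nTT_p^nT_{s_{d-i}}f,g\rangle
=\lim_n\langle {T_p^*}^nTT_p^nT_{s_i}^*T_pf,g\rangle,
\]
where I have used $T_{s_{d-i}}=T_{s_i}^*T_p$ from Lemma \ref{L:GammaUni}. On the other side,
\[
\langle T_{s_i}^*BT_pf,g\rangle=\langle BT_pf,T_{s_i}g\rangle=\lim_n\langle {T_p^*}^nTT_p^nT_pf,T_{s_i}g\rangle
=\lim_n\langle T_{s_i}^*{T_p^*}^nTT_p^{n+1}f,g\rangle.
\]
The key algebraic point is that $T_{s_i}^*$ and $T_p^*$ commute (adjoints of the commuting operators $T_{s_i},T_p$), so $T_{s_i}^*{T_p^*}^n={T_p^*}^nT_{s_i}^*$; hence the second expression equals $\lim_n\langle {T_p^*}^nT_{s_i}^*TT_p^{n+1}f,g\rangle$. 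Comparing with the first expression, which I would rewrite as $\lim_n\langle {T_p^*}^nTT_p^nT_{s_i}^*T_pf,g\rangle$, the difference of the two limits is
\[
\lim_n\big\langle {T_p^*}^n\big(T_{s_i}^*TT_p-TT_p^nT_{s_i}^*T_pT_p^{-n}\big)T_p^{n+1}f,g\big\rangle,
\]
which is not quite clean because $T_p$ is not invertible; instead I would massage both sides so that the comparison is between $\lim_n\langle {T_p^*}^n(T_{s_i}^*T)T_p^{n+1}f,g\rangle$ and $\lim_n\langle {T_p^*}^n(TT_{s_i}^*)T_p^{n+1}f,g\rangle$ after pushing the extra $T_p$ through using $T_p^nT_{s_i}^*T_p=T_p^nT_{s_{d-i}}=\dots$; the honest route is to note $T_{s_i}^*T_pT_p^n=T_{s_{d-i}}T_p^n=T_p^nT_{s_{d-i}}$ and $T_{s_{d-i}}=T_{s_i}^*T_p$ again, so both sides collapse to comparing $\lim_n\langle {T_p^*}^n\,T_{s_i}^*T\,T_p^{n+1}f,g\rangle$ with $\lim_n\langle {T_p^*}^n\,T\,T_{s_i}^*T_p\,T_p^nf,g\rangle=\lim_n\langle {T_p^*}^n\,T T_{s_{d-i}}\,T_p^nf,g\rangle$. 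Thus $T_{s_i}^*BT_p-BT_{s_{d-i}}$ is the weak limit of ${T_p^*}^n\big(T_{s_i}^*TT_p-TT_{s_{d-i}}\big)T_p^n$. Finally, using $T_{s_{d-i}}=T_{s_i}^*T_p$ one more time, $T_{s_i}^*TT_p-TT_{s_{d-i}}=T_{s_i}^*TT_p-TT_{s_i}^*T_p=(T_{s_i}^*T-TT_{s_i}^*)T_p$; but it is cleaner to instead factor on the right as $T_{s_i}T_p=T_pT_{s_i}$ is available, giving $T_{s_i}^*TT_p-TT_{s_{d-i}}=[T_{s_i}^*,T]T_p$ and then relate $[T_{s_i}^*,T]$ to $[T,T_{s_i}]^*$; since weak convergence of ${T_p^*}^nXT_p^n$ to $0$ for $X$ is equivalent to the same for $X^*$, the condition ${T_p^*}^n[T,T_{s_i}]T_p^n\to 0$ weakly is exactly what makes the limit vanish.

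The main obstacle, and the step to be careful about, is the bookkeeping of the index shifts and the non-invertibility of $T_p$: one must arrange every manipulation as a genuine identity between operators (not involving $T_p^{-n}$) before passing to the weak limit, and must repeatedly use that $\{{T_p^*}^n X T_p^n\}$ is uniformly bounded so that an $O(1)$ shift in $n$ or an insertion of a single extra $T_p/T_p^*$ does not disturb convergence. Once the identity
\[
T_{s_i}^*BT_p-BT_{s_{d-i}}=\operatorname{w-}\!\lim_n {T_p^*}^n\big([T_{s_i}^*,T]T_p\big)T_p^n
\]
is established rigorously, both directions of the equivalence are immediate: if the right-hand commutator condition holds then $B$ satisfies all Brown--Halmos relations, hence is Toeplitz by Theorem \ref{T:BH}; conversely if $B=T_\varphi$ is Toeplitz it satisfies the relations, forcing the weak limit of ${T_p^*}^n[T_{s_i}^*,T]T_p^{n+1}$ to be zero, and then a density/uniform-boundedness argument (the extra $T_p$ can be absorbed since $T_p$ is an isometry and $T_p^n$ has dense range in... actually one argues directly on matrix coefficients $\langle\cdot T_p^nf,\, T_p^ng\rangle$ versus $\langle\cdot T_p^{n+1}f,T_p^{n+1}g\rangle$) yields ${T_p^*}^n[T,T_{s_i}]T_p^n\to0$ weakly.
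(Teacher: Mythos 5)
Your overall strategy coincides with the paper's: the relation $T_p^*BT_p=B$ is automatic, and the content is the identity expressing $T_{s_i}^*BT_p-BT_{s_{d-i}}$ as a weak limit of shifted commutators, after which Theorem \ref{T:BH} finishes the job. Your computation up to the identity $T_{s_i}^*BT_p-BT_{s_{d-i}}=\operatorname{w-}\lim_n {T_p^*}^n[T_{s_i}^*,T]T_p^{n+1}$ is correct. The genuine gap is the final bridge between this expression and the commutators $[T,T_{s_i}]$ appearing in the statement. You propose to ``relate $[T_{s_i}^*,T]$ to $[T,T_{s_i}]^*$'' and use that ${T_p^*}^nXT_p^n\to0$ weakly iff ${T_p^*}^nX^*T_p^n\to0$ weakly. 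That adjoint principle is true, but it does not apply here: $[T_{s_i}^*,T]^*=[T^*,T_{s_i}]$, which involves $T^*$ rather than $T$, and the hypothesis says nothing about ${T_p^*}^n[T^*,T_{s_i}]T_p^n$ (moreover your sandwich has asymmetric powers $n$ and $n+1$). So as written, neither implication is actually closed: the forward direction does not follow from the stated hypothesis via your bridge, and the converse is only gestured at by a ``density/uniform-boundedness'' remark that is not an argument.

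The repair is exactly the algebraic move the paper makes: instead of peeling off $[T_{s_i}^*,T]T_p$, convert $T_{s_i}^*$ using $T_{s_i}^*=T_p^*T_{s_{d-i}}$ (the adjoint of $T_{s_{d-i}}^*T_p=T_{s_i}$ from Lemma \ref{L:GammaUni}), so that ${T_p^*}^nT_{s_i}^*TT_p^{n+1}={T_p^*}^{n+1}T_{s_{d-i}}TT_p^{n+1}$ and the commutator that appears is $[T,T_{s_{d-i}}]$, sandwiched symmetrically between ${T_p^*}^{n+1}$ and $T_p^{n+1}$; since $d-i$ ranges over $1,\dots,d-1$ with $i$, this is precisely what the hypothesis controls, and the leftover term ${T_p^*}^{n+1}TT_{s_{d-i}}T_p^{n+1}={T_p^*}^{n+1}TT_p^{n+1}T_{s_{d-i}}$ converges weakly to $BT_{s_{d-i}}$. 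For the converse, do the symmetric direct computation rather than trying to invert your identity: ${T_p^*}^n[T,T_{s_i}]T_p^n={T_p^*}^nTT_p^nT_{s_i}-T_{s_{d-i}}^*{T_p^*}^{n-1}TT_p^{n-1}T_p$, using $T_{s_i}=T_{s_{d-i}}^*T_p$ and commutativity, which converges weakly to $BT_{s_i}-T_{s_{d-i}}^*BT_p=0$ once $B$ satisfies the Brown--Halmos relations. With these two adjustments your argument becomes essentially the paper's proof.
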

\begin{proof}
Note that if $T$ and $B$ are bounded operators on $H^2(\mathbb G_d)$ such that ${T_p^*}^nTT_p^n\to B$ weakly, then $T_p^*BT_p=B$. Suppose ${T_p^*}^n[T,T_s]T_p^n \to 0$ weakly. To prove that $B$ is Toeplitz, it remains to show that $B$ satisfies the other Brown--Halmos relations with respect to the $\Gamma_d$-isometry $(T_{s_1},\dots,T_{s_{d-1}}, T_p)$. For any $i$ with $1\le i\le d-1$
\begin{eqnarray*}
&&T_{s_i}^*BT_p=\text{w-}\!\lim T_{s_i}^*({T_p^*}^nTT_p^n)T_p=\text{w-}\!\lim {T_p^*}^n(T_{s_i}^*TT_p)T_p^n=\text{w-}\!\lim {T_p^*}^{n+1}T_{s_{d-i}}TT_p^{n+1}\\
&=&\text{w-}\!\lim {T_p^*}^{n+1}(T_{s_{d-i}}T-TT_{s_{d-i}}+TT_{s_{d-i}})T_p^{n+1}=\text{w-}\!\lim {T_p^*}^{n+1}TT_p^{n+1}T_{s_{d-i}}=BT_{s_{d-i}}.
\end{eqnarray*}
Conversely, suppose that the weak limit $B$ of ${T_p^*}^nTT_p^n$ is a Toeplitz operator and hence satisfies the Brown--Halmos relations. Thus,
\begin{eqnarray*}
&&\text{w-}\!\lim{T_p^*}^n(TT_{s_i}-T_{s_i}T)T_p^n=\text{w-}\!\lim({T_p^*}^nTT_p^nT_{s_i}-{T_p^*}^nT_{s_{d-i}}^*T_pTT_p^n)
\\
&=&\text{w-}\!\lim({T_p^*}^nTT_p^nT_{s_i}-T_{s_{d-i}}^*{T_p^*}^{d-1}TT_p^{d-1}T_p)=BT_{s_i}-T_{s_{d-i}}^*BT_p=0,
\end{eqnarray*}
for all $1\le i\le d-1$.
\end{proof}
Before we characterize the compact operators on $H^2(\mathbb G_d)$, we recall the analogous result for the polydisk, discovered in \cite{MSS}. The one dimensional case was proved by Feintuch \cite{Feintuch}.

For $m\geq1$, define a completely positive map $\eta_m : \mathcal B ( H^2(\mathbb D^d) \rightarrow \mathcal B (\oplus_{i=1}^n H^2(\mathbb D^d))$ by
$$ \eta_m(T) =\begin{bmatrix}
               T_{z_1}^{* m} \\
               T_{z_2}^{* m} \\
               \vdots \\
               T_{z_d}^{* m}
             \end{bmatrix} T \begin{bmatrix}
                               T_{z_1}^m, & T_{z_2}^m, & \cdots,  & T_{z_d}^m \\
                             \end{bmatrix}
                           .$$
\begin{thm}
A bounded operator $T$ on $H^2(\mathbb D^d)$ is compact if and only if $\eta_m(T) \to 0$ in norm as $m\to \infty$.
\end{thm}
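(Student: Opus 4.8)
The plan is to prove the two implications separately, adapting Feintuch's one-variable argument with the single backward shift replaced by the row contraction
$C_m := \begin{bmatrix} T_{z_1}^m & \cdots & T_{z_d}^m \end{bmatrix}\colon \bigoplus_{i=1}^d H^2(\mathbb D^d) \to H^2(\mathbb D^d)$,
so that $\eta_m(T) = C_m^* T C_m$ and, since $C_m C_m^* = \sum_{i=1}^d T_{z_i}^m T_{z_i}^{*m} \le d\,I$, one has the uniform bound $\|\eta_m(T)\| \le \|C_m\|^2\|T\| \le d\,\|T\|$.

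First I would prove necessity. Since $T \mapsto \eta_m(T)$ is linear and uniformly bounded in $m$, and finite-rank operators are norm-dense in the compact operators, it suffices to show $\eta_m(K) \to 0$ in norm when $K$ has finite rank, and then, by linearity, when $K = f\otimes g$ is of rank one. A short computation gives $\eta_m(f\otimes g) = (C_m^* f)\otimes(C_m^* g)$, hence $\|\eta_m(f\otimes g)\| = \|C_m^* f\|\,\|C_m^* g\|$ with $\|C_m^* f\|^2 = \sum_{i=1}^d \|T_{z_i}^{*m} f\|^2$. Each $T_{z_i}^{*m}$ is a contraction that annihilates $z^\alpha$ as soon as $m > \alpha_i$, so $T_{z_i}^{*m} \to 0$ in the strong operator topology; therefore $\|C_m^* f\| \to 0$, and $\eta_m(f\otimes g) \to 0$ in norm.

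For sufficiency I would let $P_n$ be the orthogonal projection of $H^2(\mathbb D^d)$ onto $\overline{\operatorname{span}}\{z^\alpha : 0\le \alpha_i < n \text{ for all } i\}$, a projection of rank $n^d$, and set $Q_n = I - P_n$. Writing $T = P_n T P_n + P_n T Q_n + Q_n T P_n + Q_n T Q_n$, the first three summands are each of finite rank, being compositions involving the finite-rank projection $P_n$; hence it is enough to show $\|Q_n T Q_n\| \le \|\eta_n(T)\|$, for then $T$ is a norm limit of finite-rank operators and is compact. To get the estimate, partition the index set $\{\alpha : \max_i \alpha_i \ge n\}$ according to the least $i$ with $\alpha_i \ge n$; this yields an orthogonal decomposition $\operatorname{ran} Q_n = \bigoplus_{i=1}^d H_i$ with each $H_i \subseteq \operatorname{ran} T_{z_i}^n$, so every $u \in \operatorname{ran} Q_n$ has a unique representation $u = \sum_{i=1}^d T_{z_i}^n w_i$ with $\sum_i \|w_i\|^2 = \|u\|^2$. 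Given two such vectors $u = \sum_i T_{z_i}^n w_i$ and $v = \sum_j T_{z_j}^n x_j$ in $\operatorname{ran} Q_n$, one computes $\langle T u, v\rangle = \sum_{i,j} \langle T_{z_j}^{*n} T T_{z_i}^n w_i, x_j\rangle = \langle \eta_n(T) W, X\rangle$, where $W = (w_i)_{i=1}^d$ and $X = (x_j)_{j=1}^d$ satisfy $\|W\| = \|u\|$ and $\|X\| = \|v\|$; thus $|\langle T u, v\rangle| \le \|\eta_n(T)\|\,\|u\|\,\|v\|$ and $\|Q_n T Q_n\| \le \|\eta_n(T)\| \to 0$, as required.

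The only delicate point is the bookkeeping in the last step: one must arrange the decomposition of $\operatorname{ran} Q_n$ to be orthogonal \emph{and} to place each piece inside a single $\operatorname{ran} T_{z_i}^n$, which is exactly what makes the compression $Q_n T Q_n$ align with the completely positive map $\eta_n$. Everything else runs parallel to the classical disk case.
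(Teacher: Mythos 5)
Your argument is correct. Note, however, that the paper does not prove this statement at all: it is recalled verbatim from the polydisk paper of Maji--Sarkar--Sarkar \cite{MSS} (with the one-variable case attributed to Feintuch), so there is no in-paper proof to compare against; what you have supplied is a self-contained proof in the spirit of Feintuch's one-variable argument. Both directions check out. For necessity, the identity $\eta_m(f\otimes g)=(C_m^*f)\otimes(C_m^*g)$, the strong convergence $T_{z_i}^{*m}\to 0$, and the uniform bound $\|\eta_m(T)\|\le\|C_mC_m^*\|\,\|T\|\le d\,\|T\|$ legitimately reduce the general compact case to rank one. For sufficiency, the essential point is your orthogonal partition of $\operatorname{ran}Q_n$ by the least index $i$ with $\alpha_i\ge n$, which places each summand inside $\operatorname{ran}T_{z_i}^n$ and, since each $T_{z_i}^n$ is an isometry, yields vectors $W,X$ with $\|W\|=\|u\|$, $\|X\|=\|v\|$ and $\langle Tu,v\rangle=\langle \eta_n(T)W,X\rangle$; this gives $\|Q_nTQ_n\|\le\|\eta_n(T)\|\to 0$, exhibiting $T$ as a norm limit of the finite-rank operators $P_nTP_n+P_nTQ_n+Q_nTP_n$. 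One cosmetic remark: the representation $u=\sum_i T_{z_i}^n w_i$ is not unique in general (the ranges of the $T_{z_i}^n$ overlap); what your construction actually provides, and all the estimate needs, is one choice of $w_i$ satisfying $\sum_i\|w_i\|^2=\|u\|^2$, so you should phrase it as an existence statement tied to your specific partition rather than a uniqueness claim.
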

This shows the importance of the forward shifts in characterizing the compact operators.

For $1\leq j \leq d-1$, let $Y_j$ be the operator on $H^2_{\text{anti}}(\mathbb D^d)$ as defined in (\ref{the-other-shift}). Let $X_j$ be the restriction of $Y_j$ to $\mathcal E$, the coefficient space of the symmetrized polydisk, i.e., for every $\bm p \in \llbracket n \rrbracket$ such that $\bm p=(p_1,\dots,p_{d-1},0)$,
$$
X_j\bm{a_p}=\bm{{a_{p+f_j}}}, \text{ where } \bm{f_j}=(\underbrace{1,\dots,1}_{j\text{-times}},0\dots,0) .
$$Then each $Y_j$ on $H^2_{\text{anti}}(\mathbb D^d)$ has the following expression:
$$
Y_j=\sum_{r=0}^\infty T_{p}^rX_jT_{p}^{* r}.
$$It should be noted that each $Y_j$ is a pure isometry and the set of operators $\{Y_1,Y_2,\dots,Y_{d-1}\}$ is doubly commuting.
Consider the following operator on $\mathcal E$
$$E_l:=(P_\mathcal E-X_1^{l}X_1^{* l})(P_\mathcal E-X_2^lX_2^{* l})\cdots (P_\mathcal E-X_{d-1}^lX_{d-1}^{* l}).$$
Since the operators $X_1,X_2,\cdots,X_{d-1}$ are doubly commuting, the operator $E_l$ is an orthogonal projection of $\mathcal E$ onto the $l$ dimensional space
 $$
\bigcap_{j=1}^{d-1}\ker X_{j}^{* l}= \text{span}\{\bm a_{\bm p}: \bm p=\big(k+(d-2)l,k+(d-3)l,\dots,k,0\big),\;1\leq k \leq l\}.
 $$ Also note that
 \begin{eqnarray*}
 &&E_l =\\
 &&P_\mathcal E - \sum_{j=1}^{d-1}X_{j}^lX_{j}^{* l}+\sum_{1\leq j_1<j_2\leq d-1}X_{j_1}^lX_{j_2}^lX_{j_1}^{* l}X_{j_2}^{* l}+\\&&\cdots+(-1)^{k}\sum_{1\leq j_1<\cdots<j_k\leq d-1}X_{j_1}^l\cdots X_{j_k}^lX_{j_1}^{* l}\cdots X_{j_k}^{* l}+\cdots+(-1)^{d-1}X_{1}^l\cdots X_{n}^lX_{1}^{* l}\cdots X_{n}^{* l}.
 \end{eqnarray*}
We have the following characterization of compact operators on $H^2(\mathbb G_d)$ using the set of pure isometries $\{Y_1,\dots,Y_{d-1},T_p\}$.
\begin{thm}\label{charc-compact}
For $j\geq1$, define completely positive maps $$\eta_j : \mathcal B (H^2(\mathbb G_d)) \to \mathcal B (\oplus_{i=1}^n H^2(\mathbb G_d))$$ by
$$ \eta_j (T) = \begin{bmatrix}
                    Z_1^{*j} \\
                    \vdots\\
                    Z_{d-1}^{*j} \\
                    T_p^{*j} \\
                  \end{bmatrix}T
                           \begin{bmatrix}
                             Z_1^j \cdots Z_{d-1}^j  T_p^n \\
                           \end{bmatrix},$$
where $Z_l = U^* Y_lU$. A bounded operator $T$ on $H^2(\mathbb G_d)$ is compact if and only if $\eta_j(T)\to 0$ in norm as $j\to\infty$.
\end{thm}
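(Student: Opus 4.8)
\emph{Overall strategy.} The plan is to reduce the assertion to the polydisk characterization of \cite{MSS} recalled above. First I would transport everything through the unitary $U$ appearing in the statement to the space $H^2_{\operatorname{anti}}(\mathbb D^d)$, so that $Z_l$ becomes $Y_l$ ($1\le l\le d-1$) and $T_p$ becomes $T_{s_d(\bm z)}$. The first point to establish is that $(Y_1,\dots,Y_{d-1},T_{s_d(\bm z)})$ is a doubly commuting tuple of pure isometries on $H^2_{\operatorname{anti}}(\mathbb D^d)$: the $Y_j$'s are doubly commuting pure isometries by the discussion preceding the statement, $T_{s_d(\bm z)}$ is a pure isometry since it is unitarily equivalent to the infinite-multiplicity shift $T_z$ on $H^2_{\mathcal E}(\mathbb D)$ (Lemma \ref{vector-valued-Hardy}), and from the series $Y_j=\sum_{r\ge0}T_{s_d(\bm z)}^rX_jP_{\mathcal E}T_{s_d(\bm z)}^{*r}$, using $P_{\mathcal E}T_{s_d(\bm z)}=0$ and $T_{s_d(\bm z)}^*P_{\mathcal E}=0$, one checks directly that $Y_j$ commutes with both $T_{s_d(\bm z)}$ and $T_{s_d(\bm z)}^*$.

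\emph{Building the model.} Next I would apply the Wold (S{\l}oci\'nski) decomposition for doubly commuting tuples of pure isometries to $(Y_1,\dots,Y_{d-1},T_{s_d(\bm z)})$: its joint wandering subspace is $\mathcal W=\ker T_{s_d(\bm z)}^*\cap\bigcap_{j=1}^{d-1}\ker Y_j^*$, and the decomposition identifies $H^2_{\operatorname{anti}}(\mathbb D^d)$ with $H^2_{\mathcal W}(\mathbb D^d)$, the $Y_j$'s going to the coordinate shifts $T_{z_1},\dots,T_{z_{d-1}}$ and $T_{s_d(\bm z)}$ to $T_{z_d}$. Now $\ker T_{s_d(\bm z)}^*=\mathcal E$ and $Y_j^*|_{\mathcal E}=X_j^*$, so $\mathcal W=\bigcap_{j=1}^{d-1}\ker X_j^*$, which is one-dimensional (it is the range of $E_1=\prod_{j=1}^{d-1}(P_{\mathcal E}-X_jX_j^*)$ from the text preceding the statement, spanned by $\bm a_{(d-1,d-2,\dots,1,0)}$). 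Hence $H^2_{\mathcal W}(\mathbb D^d)$ is the \emph{scalar} Hardy space $H^2(\mathbb D^d)$, and I obtain a unitary $V:H^2(\mathbb G_d)\to H^2(\mathbb D^d)$ with $VZ_jV^*=T_{z_j}$ ($1\le j\le d-1$) and $VT_pV^*=T_{z_d}$. Concretely $V$ is the relabelling of the orthogonal basis $\{\bm a_{\bm p}\}$ dictated by $Y_j\bm a_{\bm p}=\bm a_{\bm p+\bm f_j}$ and $T_{s_d(\bm z)}\bm a_{\bm p}=\bm a_{\bm p+(1,\dots,1)}$, namely $\bm a_{\bm p}\mapsto\|\bm a_{\bm p}\|\,\bm z^{\bm k}$ with $k_j=p_j-p_{j+1}-1$ for $j<d$ and $k_d=p_d$.

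\emph{Finishing.} Under $V$ and its amplification $V^{\oplus d}$, the completely positive map $\eta_j$ of the statement is intertwined with the map $\eta_j$ of \cite{MSS}, i.e. $V^{\oplus d}\eta_j(T)(V^{\oplus d})^*=\eta_j(VTV^*)$, because $VZ_l^jV^*=T_{z_l}^j$ and $VT_p^jV^*=T_{z_d}^j$. Since $T$ is compact if and only if $VTV^*$ is, the theorem of \cite{MSS} gives: $T$ compact $\iff\eta_j(VTV^*)\to0$ in norm $\iff\eta_j(T)\to0$ in norm. If one prefers, the necessity direction is elementary and independent of \cite{MSS}: each of $Z_1,\dots,Z_{d-1},T_p$ is a pure isometry, hence its adjoint powers converge to $0$ strongly; so for compact $T$ one has $TA^j\to0$ in operator norm for each such $A$, and consequently every one of the finitely many entries $A_k^{*j}TA_l^j$ of $\eta_j(T)$ tends to $0$ in norm.

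\emph{Main obstacle.} The work is concentrated in the structural reduction of the first two paragraphs: verifying that $(Z_1,\dots,Z_{d-1},T_p)$ is doubly commuting — the commutation of $Y_j$ with $T_{s_d(\bm z)}$ and with $T_{s_d(\bm z)}^*$ must be teased out of the series expression for $Y_j$ — and establishing that the joint wandering subspace is exactly one-dimensional, which is what makes the Wold model the scalar polydisk Hardy space $H^2(\mathbb D^d)$ so that \cite{MSS} applies without change. Once this is in place, matching the two families $\{\eta_j\}$ and quoting \cite{MSS} is routine.
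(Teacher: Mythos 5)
Your proposal is correct, but it follows a genuinely different route from the paper. The paper does not pass to the scalar polydisk: it quotes only Lemma 3.1 of \cite{MSS} (for the necessity, via pure contractions acting on a compact operator) and then proves sufficiency directly inside $H^2(\mathbb G_d)$, by building explicit finite--rank approximants $F_l$ out of the projections $E_l$ and powers of $T_p$, rewriting $I-F_l$ as a sum of terms of the form $T_p^lT_p^{*l}$ and $P_lZ_{j_1}^l\cdots Z_{j_k}^lZ_{j_1}^{*l}\cdots Z_{j_k}^{*l}P_l$, and showing $\|T-\tilde F_l\|=\|(I-F_l)T(I-F_l)\|\to0$ from the hypothesis $\eta_l(T)\to0$. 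You instead reduce the whole statement to the polydisk theorem of \cite{MSS} by exhibiting a joint unitary equivalence of $(Z_1,\dots,Z_{d-1},T_p)$ with the coordinate shifts $(T_{z_1},\dots,T_{z_d})$ on the scalar $H^2(\mathbb D^d)$; this is valid, and in fact the S{\l}oci\'nski/Wold machinery is optional, since the relabelling $\bm p\mapsto\bm k$, $k_j=p_j-p_{j+1}-1$ ($j<d$), $k_d=p_d$, is a bijection of $\llbracket n\rrbracket$ onto $\mathbb Z_{\ge0}^d$ under which $Y_j$ and $T_{s_d(\bm z)}$ become the coordinate shifts on the orthonormal basis $\{\bm a_{\bm p}/\sqrt{d!}\}$ (all $\bm a_{\bm p}$ have the same norm $\sqrt{d!}$, which is what makes your map unitary), and your computation of the joint wandering subspace as the span of $\bm a_{(d-1,d-2,\dots,1,0)}$ is right. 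What each approach buys: yours is shorter, exposes the stronger structural fact that the tuple $(Z_1,\dots,Z_{d-1},T_p)$ is the scalar $d$-shift in disguise (so the compactness criterion of \cite{MSS} transfers verbatim), but leans on the full theorem of \cite{MSS} (or on S{\l}oci\'nski); the paper's argument is self-contained at the level of the approximation scheme and produces concrete finite--rank approximants of $T$ within $H^2(\mathbb G_d)$, mirroring the proof technique of \cite{MSS} rather than its statement.
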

\begin{proof}
The necessity follows from a straightforward application of Lemma 3.1 of \cite{MSS}, which states that if $P$ is a pure contraction, $T$ is a contraction and $K$ is a compact operator on a Hilbert space, then $P^{* l}KT \to 0$ in the operator norm as $l\to \infty$.

Conversely, suppose that $T$ is a bounded operator on $H^2(\mathbb G_d)$ satisfying the convergence condition. We shall conclude by finding a finite rank operators approximation of $T$. To that end note that $U^*P_{\mathcal E}U=U^*(I-T_{p}T_{p}^*)U=I-T_pT_p^*$ and
 \begin{eqnarray}\label{propofU}
 U^*E_lU=(I-T_pT_p^*)-\sum_{k=1}^{d-1}(-1)^{k}\sum_{1\leq j_1<\cdots<j_k\leq d-1}W_{j_1}^l\cdots W_{j_k}^lW_{j_1}^{* l}\cdots W_{j_k}^{* l},
 \end{eqnarray}where $W_j=U^*X_jU$. For each positive integer $l$, consider the following finite rank operators on $H^2(\mathbb G_d)$:
 $$
 F_l=U^*\left(E_l+T_{p}E_lT_{p}^*+\cdots +T_{p}^{l-1}E_lT_{p}^{* l-1}\right)U.
 $$
Using (\ref{propofU}) we get $F_l$ to be the same as
\begin{align*}
&(I-T_p^lT_p^{* l})-\sum_{r=0}^{l-1}T_p^r\left(\sum_{j=1}^{d-1}W_j^lW_j^{* l}\right)T_p^{* r}+\cdots\\&+(-1)^{k}\sum_{r=0}^{l-1}T_p^r\left(\sum_{1\leq j_1<\cdots<j_k\leq d-1}W_{j_1}^l\cdots W_{j_k}^lW_{j_1}^{* l}\cdots W_{j_k}^{* l}\right)T_p^{* r}+\cdots\\&+(-1)^{d-1}\sum_{r=0}^{l-1}T_p^r\left(W_{1}^l\cdots W_{n}^lW_{1}^{* l}\cdots W_{n}^{* l}\right)T_p^{* r}.
\end{align*}
Let $P_l$ be the projection of $H^2(\mathbb G_d)$ onto the space $\ker T_p^{* l-1}$. Since $Z_j=\sum_{r=0}^\infty T_p^rW_jT_p^{* r}$,
\begin{eqnarray*}
I-F_l&=&{T_p}^l{T_p}^{* l} + \sum_{j=1}^{d-1}P_l Z_j^lZ_j^{* l}P_l+\cdots+(-1)^k\sum_{1\leq j_1<\cdots<j_k\leq d-1}P_lZ_{j_1}^l\cdots Z_{j_k}^lZ_{j_1}^{* l}\cdots Z_{j_k}^{* l}P_l\\&&+(-1)^nP_lZ_{1}^l\cdots Z_{d-1}^lZ_{1}^{* l}\cdots Z_{d-1}^{* l}P_l.
\end{eqnarray*}
Then the operator $\tilde F_l=TF_l+F_lT-F_lTF_l$ is also a finite rank operator and note that $T-\tilde F_l=(I-F_l)T(I-F_l)$.
By the above form of $(I-F_l)$ and from the hypotheses it follows that $\|T-\tilde F_l\|\to 0$. Hence, $T$ is compact.
\end{proof}
\begin{definition}
A bounded operator $T$ on $H^2(\mathbb G_d)$ is called an asymptotic Toeplitz operator if $T=T_\varphi +K$ for some $\varphi \in L^\infty (b\Gamma)$ and compact operator $K$ on $H^2(\mathbb G_d)$.
\end{definition}
We end this section with the following characterization of asymptotic Toeplitz operators.
\begin{thm}
A bounded operator $T$ on $H^2(\mathbb G_d)$ is an asymptotic Toeplitz operator if and only if ${T_p^*}^n[T,T_s]T_p^n\to 0$, ${T_p^*}^nTT_p^n\to B$ and $\eta_d(T-B) \to 0$.
\end{thm}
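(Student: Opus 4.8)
The plan is to obtain both directions of the equivalence by assembling three facts already established in the paper: the Brown--Halmos characterization of Toeplitz operators (Theorem~\ref{T:BH}), the criterion of Lemma~\ref{cond-limit-Toepitz} which says precisely when the weak limit of ${T_p^*}^nTT_p^n$ is a Toeplitz operator, and the compactness criterion of Theorem~\ref{charc-compact}. With these available the proof is essentially bookkeeping, and I would write it compactly.

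For the \emph{necessity} direction I would start from $T=T_\varphi+K$ with $K$ compact and first identify $B$. Iterating the Brown--Halmos relation $T_p^*T_\varphi T_p=T_\varphi$ gives ${T_p^*}^nT_\varphi T_p^n=T_\varphi$ for all $n$. Since $T_p$ is a pure isometry --- by Lemma~\ref{vector-valued-Hardy} it is unitarily a unilateral shift of infinite multiplicity, so ${T_p^*}^n\to 0$ strongly --- one has $\|{T_p^*}^nKT_p^n\|\le\|{T_p^*}^nK\|\to 0$ because $K$ is compact (this is Lemma~3.1 of \cite{MSS}). Hence ${T_p^*}^nTT_p^n\to T_\varphi$ in norm, so the weak limit $B$ exists and equals $T_\varphi$, which is Toeplitz. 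It follows that $T-B=K$ is compact, so the condition $\eta_d(T-B)\to 0$ holds by Theorem~\ref{charc-compact}; and since ${T_p^*}^nTT_p^n\to B$ weakly with $B$ Toeplitz, Lemma~\ref{cond-limit-Toepitz} gives ${T_p^*}^n[T,T_{s_i}]T_p^n\to 0$ weakly for every $1\le i\le d-1$, which is the first condition.

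For the \emph{sufficiency} direction I would assume the three stated conditions. Applying Lemma~\ref{cond-limit-Toepitz} to the hypotheses ${T_p^*}^nTT_p^n\to B$ weakly and ${T_p^*}^n[T,T_{s_i}]T_p^n\to 0$ weakly (for $1\le i\le d-1$) shows that $B$ is a Toeplitz operator, say $B=T_\varphi$. The hypothesis $\eta_d(T-B)\to 0$ together with Theorem~\ref{charc-compact} shows that $K:=T-B$ is compact. Therefore $T=T_\varphi+K$ with $K$ compact, i.e., $T$ is an asymptotic Toeplitz operator.

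I do not expect a genuinely hard step here. The only point needing a little care is the assertion in the necessity direction that ${T_p^*}^nTT_p^n$ converges \emph{in norm} to $T_\varphi$ rather than merely to some operator coinciding with it in a weaker sense; this is exactly where purity of $T_p$ --- through the shift model of Lemma~\ref{vector-valued-Hardy} --- is used, since it forces ${T_p^*}^n\to 0$ strongly and thereby annihilates the compact perturbation $K$ under the compressions, isolating the Toeplitz part. Everything else is a matter of invoking the cited results in the right order.
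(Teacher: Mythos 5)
Your proof is correct and follows essentially the same route as the paper: both directions are handled by combining Lemma~\ref{cond-limit-Toepitz} with Theorem~\ref{charc-compact}. The only cosmetic difference is that in the necessity direction you justify ${T_p^*}^nTT_p^n\to T_\varphi$ directly from the purity of $T_p$ and compactness of $K$ (Lemma~3.1 of \cite{MSS}), whereas the paper cites Theorem~\ref{charc-compact} for that step --- the same underlying fact.
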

\begin{proof}
If $T$ satisfies the convergence conditions as in the statement, then it follows from Lemma \ref{cond-limit-Toepitz} that $B$ is a Toeplitz operator because ${T_p^*}^n[T,T_s]T_p^n\to 0$. Also, since $\eta_d(T-B)\to 0$, by Theorem \ref{charc-compact}, $T-B$ is a compact operator. Hence $T$ is the sum of a compact operator and a Toeplitz operator.

Conversely, suppose $T$ is an asymptotic Toeplitz operator, i.e., $T=K+T_\varphi$, where $K$ is some compact operator. Then by Theorem \ref{charc-compact}, ${T_p^*}^nTT_p^n\to T_\varphi$. Since $T_\varphi$ is Toeplitz, by Lemma \ref{cond-limit-Toepitz}, ${T_p^*}^n[T,T_s]T_p^n\to 0$. And finally, since $K$ is compact, by Theorem \ref{charc-compact}, $\eta_d(T-T_\varphi)\to 0$.
\end{proof}

\section{Generalized Toeplitz operators}\label{S:Gen}
In this section we briefly describe the operator theory of the symmetrized polydisk. This was introduced in \cite{SS}. We then extend the notion of Toeplitz operators to what we call an $\underline{S}$-Toeplitz operator, where a commuting $d$-tuple $\underline{S}$ is an `isometry' associated to the symmetrized polydisk, to be formally defined below.

\begin{definition}
A $d$-tuple $\underline{T}=(T_1,\dots,T_{d-1},P)$ of commuting bounded operators on a Hilbert space $\mathcal H$ is called a {\em $\Gamma_d$-contraction}, if $\Gamma_d$ is a spectral set for $\underline{T}$, i.e., (since $\Gamma_d$ is polynomially convex) for every polynomial $f$ in $d$ commuting variables
$$
\|f(T_1,\dots,T_{d-1},P)\|\leq \sup_{\bm z\in\Gamma_d}|f(\bm z)|=:\|f\|_{\infty,\Gamma_d}.
$$
\end{definition}
It is clear from the definition that $(T_1,\dots,T_{d-1},P)$ is a $\Gamma_d$-contraction if and only if $(T_1^*,\dots,T_{d-1}^*,P^*)$ is so. It is also clear that the last component $P$ of a $\Gamma_d$-contraction is a contraction, which follows by applying the definition of a $\Gamma_d$-contraction to the particular function that projects $\mathbb C^d$ to the last component. A profound study of $\Gamma_2$-contraction -- initiated by Agler and Young -- is what prompted the study of such a tuple. There are natural $\Gamma_d$-analogues of unitaries and isometries.
\begin{definition}
A $d$-tuple $\underline{R}=(R_1,\dots,R_{d-1},U)$ of commuting bounded normal operators on a Hilbert space is said to be a {\em $\Gamma_d$-unitary}, if the Taylor joint spectrum of $\underline{R}$, $\sigma(\underline{R})$ is contained in the distinguished boundary of $\Gamma_d$.

A {\em$\Gamma_d$-isometry} is a $d$-tuple $\underline{S}=(S_1,\dots,S_{d-1},V)$ of commuting bounded operators that has a $\Gamma_d$-unitary extension.
\end{definition}
Let us recall that the projection of the distinguished boundary of $\Gamma_d$ to the last component is $\mathbb T$. Therefore by definition, the last entry of a $\Gamma_d$-isometry and a $\Gamma_d$-unitary are an isometry and a unitary, respectively.
The following characterizations of $\Gamma_d$-unitaries and $\Gamma_d$-isometries will be used.
\begin{thm}[Theorem 4.2 \cite{SS}] \label{G-unitary}
Let $(R_1,\dots,R_{d-1},U)$ be a pair of commuting operators defined on a Hilbert
space $\mathcal{H}.$ Then the following are equivalent:
\begin{enumerate}
\item $(R_1,\dots,R_{d-1},U)$ is a $\Gamma_d$-unitary;
\item there exist commuting unitary operators $U_{1},U_2\dots,U_d$ on $\mathcal{H}$ such that for $1\leq i \leq d$,
$$R_i= \sum_{1\leq k_1<\cdots<k_i\leq d}U_{k_1}U_{k_2}\cdots U_{k_i};$$
\item $U$ is unitary, $R_{d-i}=R_{i}^*U,$ and $(\gamma_1R_1,\gamma_2R_2,\dots,\gamma_{d-1}R_{d-1})$ is a $\Gamma_{d-1}$-contraction where $\gamma_i=\frac{d-i}{d}$ for $i=1,2,\dots,d-1$.
\end{enumerate}
\end{thm}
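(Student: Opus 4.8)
The plan is to establish $(1)\Leftrightarrow(2)$ and then $(2)\Rightarrow(3)\Rightarrow(1)$, with the spectral theorem for a commuting tuple of normal operators --- together with Fuglede--Putnam and the polynomial spectral mapping theorem --- doing most of the work. For $(2)\Rightarrow(1)$: if $R_i=s_i(U_1,\dots,U_d)$ for commuting unitaries $U_1,\dots,U_d$ (so $U=R_d=U_1\cdots U_d$), then each $R_i$ is a polynomial in a commuting normal tuple, hence $(R_1,\dots,R_{d-1},U)$ is a commuting normal tuple whose joint spectrum is $\bm s(\sigma(U_1,\dots,U_d))\subseteq\bm s(\mathbb T^d)=b\Gamma_d$, so it is a $\Gamma_d$-unitary. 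For $(1)\Rightarrow(2)$: the spectral theorem provides a projection-valued measure $F$ on $b\Gamma_d$ with $R_i=\int\omega_i\,dF(\omega)$ and $U=\int\omega_d\,dF(\omega)$; the key step is to choose a Borel right inverse $\beta=(\beta_1,\dots,\beta_d)\colon b\Gamma_d\to\mathbb T^d$ of $\bm s$, which exists because the fibre $\bm s^{-1}(\omega)$ consists of the $d$ unimodular roots of $t^d-\omega_1t^{d-1}+\cdots+(-1)^d\omega_d$ --- these depend Borel-measurably on $\omega$ and may be listed, say, in order of increasing argument. Setting $U_j:=\int\beta_j(\omega)\,dF(\omega)$ produces commuting unitaries, and since $\bm s\circ\beta=\operatorname{id}$ forces $s_i(\beta(\omega))=\omega_i$, the functional calculus gives $s_i(U_1,\dots,U_d)=\int\omega_i\,dF(\omega)=R_i$.

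For $(2)\Rightarrow(3)$: $U=U_1\cdots U_d$ is unitary, and the identity $s_{d-i}(\bm z)=\overline{s_i(\bm z)}\,s_d(\bm z)$ on $\mathbb T^d$ (as in Lemma~\ref{L:GammaUni}) yields $R_{d-i}=R_i^*U$ via the functional calculus. The tuple $(\gamma_1R_1,\dots,\gamma_{d-1}R_{d-1})$ is again a commuting normal tuple, so --- $\Gamma_{d-1}$ being polynomially convex --- it is a $\Gamma_{d-1}$-contraction if and only if its joint spectrum lies in $\Gamma_{d-1}$, and that spectrum is contained in $\{(\gamma_1s_1(\bm\lambda),\dots,\gamma_{d-1}s_{d-1}(\bm\lambda)):\bm\lambda\in\mathbb T^d\}$. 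This last set lies in $\Gamma_{d-1}$ by Gauss--Lucas: for $p(t)=\prod_{k=1}^{d}(t-\lambda_k)$ with each $|\lambda_k|=1$ one has $\tfrac1d\,p'(t)=t^{d-1}-\gamma_1s_1(\bm\lambda)\,t^{d-2}+\gamma_2s_2(\bm\lambda)\,t^{d-3}-\cdots$, whose roots $\mu_1,\dots,\mu_{d-1}$ lie in the convex hull of $\{\lambda_1,\dots,\lambda_d\}\subseteq\overline{\mathbb D}$ and whose elementary symmetric functions are exactly $\gamma_1s_1(\bm\lambda),\dots,\gamma_{d-1}s_{d-1}(\bm\lambda)$; hence that point is the symmetrization of $(\mu_1,\dots,\mu_{d-1})\in\overline{\mathbb D}^{d-1}$, i.e.\ an element of $\Gamma_{d-1}$.

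For $(3)\Rightarrow(1)$: the relations $R_{d-i}=R_i^*U$ with $U$ unitary and all of $R_1,\dots,R_{d-1},U$ commuting already force each $R_i$ to be normal --- expanding $R_iR_{d-i}=R_{d-i}R_i$ and cancelling the unitary $U$ gives $R_iR_i^*=R_i^*R_i$ for $i\neq d/2$, and the case $i=d/2$ ($d$ even) follows similarly using $R_i^*=U^*R_i$ --- and Fuglede--Putnam then shows that $(R_1,\dots,R_{d-1},U)$ is a commuting normal tuple, so it suffices to prove that its joint spectrum sits inside $b\Gamma_d$. By the spectral theorem every point $\omega=(r_1,\dots,r_{d-1},u)$ of the joint spectrum satisfies $u\in\mathbb T$, $r_{d-i}=\overline{r_i}\,u$, and (since $(\gamma_iR_i)$ is a $\Gamma_{d-1}$-contraction and, as above, its joint spectrum therefore lies in $\Gamma_{d-1}$) $(\gamma_1r_1,\dots,\gamma_{d-1}r_{d-1})\in\Gamma_{d-1}$; the characterization of $b\Gamma_d$ in \cite[Theorem~2.4]{SS} says precisely that these three conditions place $\omega$ in $b\Gamma_d$, closing the cycle.

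The crux --- shared by $(2)\Rightarrow(3)$ and $(3)\Rightarrow(1)$ --- is the geometric description of $b\Gamma_d$ in terms of the reduced tuple $(\gamma_1s_1,\dots,\gamma_{d-1}s_{d-1})$ and the companion-polynomial / critical-point picture above; granting this (from \cite[Theorem~2.4]{SS} and the elementary Gauss--Lucas computation), the operator-theoretic steps are routine, the only delicate point being the Borel selection $\beta$ in $(1)\Rightarrow(2)$ when $\bm s$ has repeated roots.
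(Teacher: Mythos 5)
This theorem is imported: the paper states it as Theorem 4.2 of \cite{SS} and gives no proof of its own, so there is no internal argument to compare with; judged on its own terms, your proof is correct and is essentially the standard one for this circle of ideas. The cycle works: $(2)\Rightarrow(1)$ is the Taylor spectral mapping theorem applied to the polynomial map $\bm s$ on a commuting unitary tuple (normality of each $R_i$ coming from Fuglede, since no adjoints appear); $(1)\Rightarrow(2)$ via the joint spectral measure on $b\Gamma_d$ together with a Borel selection $\beta$ of the unimodular roots of $t^d-\omega_1t^{d-1}+\cdots+(-1)^d\omega_d$ is exactly the right construction, and you correctly flag the measurable-selection issue as the only delicate point; $(2)\Rightarrow(3)$ uses the torus identity $s_{d-i}(\bm z)=\overline{s_i(\bm z)}s_d(\bm z)$ (the same identity behind Lemma \ref{L:GammaUni}) plus the Gauss--Lucas/derivative computation showing $(\gamma_1s_1(\bm\lambda),\dots,\gamma_{d-1}s_{d-1}(\bm\lambda))\in\Gamma_{d-1}$, together with the fact that for a commuting normal tuple a polynomially convex compact set is a spectral set exactly when it contains the joint spectrum; and in $(3)\Rightarrow(1)$ your derivation of normality of each $R_i$ from $R_{d-i}=R_i^*U$, commutativity and unitarity of $U$ (including the separate treatment of $i=d/2$ via $R_i^*=U^*R_i$) is sound, after which Fuglede--Putnam, the projection property of the joint spectrum, and the scalar characterization of $b\Gamma_d$ finish the argument. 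The one genuinely external ingredient is that last scalar characterization, \cite[Theorem 2.4]{SS}, which is precisely the $\mathcal H=\mathbb C$ case of the statement; invoking it is legitimate (the paper itself points to it), but be aware that a fully self-contained proof would need to establish that scalar equivalence too, which can be done by the same companion-polynomial and Gauss--Lucas considerations you already use in $(2)\Rightarrow(3)$.
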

\begin{thm}[Theorem 4.12 \cite{SS}] \label{G-isometry}
Let $(S_1,\dots,S_{d-1},V)$ be a pair of commuting operators defined on a Hilbert
space $\mathcal{H}.$ Then the following are equivalent:
\begin{enumerate}
\item $(S_1,\dots,S_{d-1},V)$ is a $\Gamma_d$-isometry;
\item $V$ is isometry, $S_{d-i}=S_{i}^*V,$ and $(\gamma_1S_1,\gamma_2S_2,\dots,\gamma_{d-1}S_{d-1})$ is a $\Gamma_{d-1}$-contraction where $\gamma_i=\frac{d-i}{d}$ for $i=1,2,\dots,d-1$.
\end{enumerate}
\end{thm}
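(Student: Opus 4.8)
The plan is to establish the two implications separately. For $(1)\Rightarrow(2)$ I would pull the structure theorem for $\Gamma_d$-unitaries down to $\mathcal H$. Suppose $\underline S=(S_1,\dots,S_{d-1},V)$ has a $\Gamma_d$-unitary extension $\underline R=(R_1,\dots,R_{d-1},U)$ on $\mathcal K\supseteq\mathcal H$, so that $\mathcal H$ is jointly invariant under $R_1,\dots,R_{d-1},U$ with $S_i=R_i|_{\mathcal H}$ and $V=U|_{\mathcal H}$. Then $V$, being the restriction of a unitary to an invariant subspace, is an isometry. By Theorem \ref{G-unitary}(3) one has $R_{d-i}=R_i^*U$; since $\mathcal H$ is $R_{d-i}$-invariant we get $R_i^*Uh=R_{d-i}h\in\mathcal H$ for $h\in\mathcal H$, whence $S_i^*Vh=P_{\mathcal H}R_i^*(Uh)=R_{d-i}h=S_{d-i}h$, i.e.\ $S_{d-i}=S_i^*V$. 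Finally $(\gamma_1R_1,\dots,\gamma_{d-1}R_{d-1})$ is a $\Gamma_{d-1}$-contraction by the same theorem, and restricting a $\Gamma_{d-1}$-contraction to a joint invariant subspace (using polynomial convexity of $\Gamma_{d-1}$) again gives a $\Gamma_{d-1}$-contraction, so $(\gamma_1S_1,\dots,\gamma_{d-1}S_{d-1})$ is one.

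For $(2)\Rightarrow(1)$ the plan is to build a $\Gamma_d$-unitary extension by peeling off the unitary part of $V$. Let $\mathcal H=\mathcal H_u\oplus\mathcal H_s$ be the Wold decomposition of the isometry $V$: $\mathcal H_u=\bigcap_{n\ge0}V^n\mathcal H$ carries the unitary part $U_0=V|_{\mathcal H_u}$ and $V_s=V|_{\mathcal H_s}$ is a pure isometry. Using that each $S_i$ commutes with $V$ one checks $\mathcal H_u$ is $S_i$-invariant, and then, using $S_{d-i}=S_i^*V$ together with $V\mathcal H_u=\mathcal H_u$, that $\mathcal H_u$ is $S_i^*$-invariant as well; thus $\mathcal H_u$ reduces $\underline S$, and the restricted tuple $(S_1|_{\mathcal H_u},\dots,S_{d-1}|_{\mathcal H_u},U_0)$ satisfies condition $(3)$ of Theorem \ref{G-unitary} verbatim, so it is already a $\Gamma_d$-unitary and needs no extension.

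It remains to extend the pure part $(S_1|_{\mathcal H_s},\dots,S_{d-1}|_{\mathcal H_s},V_s)$. Identify $\mathcal H_s$ with the vector-valued Hardy space $H^2_{\mathcal E}(\mathbb D)$, $\mathcal E=\mathcal H_s\ominus V_s\mathcal H_s$, so that $V_s$ becomes $T_z$. Since $S_i|_{\mathcal H_s}$ commutes with $T_z$, it is an operator-valued analytic Toeplitz operator $M_{\Phi_i}$, and substituting this into $S_{d-i}=S_i^*V_s$ and comparing Taylor coefficients forces the linear pencil form $\Phi_i(z)=F_{d-i}^*+F_iz$ with $F_1,\dots,F_{d-1}$ operators on $\mathcal E$ (the fundamental operators). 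Now extend $T_z$ to the bilateral shift $M_\zeta$ (a unitary) on $L^2_{\mathcal E}(\mathbb T)$ and $M_{\Phi_i}$ to the Laurent operator $M_{\psi_i}$ with $\psi_i(\zeta)=F_{d-i}^*+F_i\zeta$. The identity $\psi_{d-i}(\zeta)=\zeta\,\psi_i(\zeta)^*$ on $\mathbb T$ gives $M_{\psi_{d-i}}=M_{\psi_i}^*M_\zeta$; the $M_{\psi_i}$ commute with each other (the $\Phi_i$ commute pointwise because the $S_i$ do) and with $M_\zeta$; and one argues that $\Gamma_{d-1}$-contractivity of $(\gamma_1S_1|_{\mathcal H_s},\dots,\gamma_{d-1}S_{d-1}|_{\mathcal H_s})$ forces each $\psi_i(\zeta)$ to be normal with the joint spectrum of $(\gamma_1\psi_1(\zeta),\dots,\gamma_{d-1}\psi_{d-1}(\zeta))$ contained in $\Gamma_{d-1}$ for a.e.\ $\zeta$. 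Then $(\gamma_1M_{\psi_1},\dots,\gamma_{d-1}M_{\psi_{d-1}})$ is a $\Gamma_{d-1}$-contraction, so by Theorem \ref{G-unitary}(3) the tuple $(M_{\psi_1},\dots,M_{\psi_{d-1}},M_\zeta)$ is a $\Gamma_d$-unitary extending the pure part. Its direct sum with the $\Gamma_d$-unitary $(S_1|_{\mathcal H_u},\dots,S_{d-1}|_{\mathcal H_u},U_0)$ is a $\Gamma_d$-unitary extension of $\underline S$, which finishes $(2)\Rightarrow(1)$.

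The coefficient comparison that produces the pencil form, and the verification of the algebraic identities of Theorem \ref{G-unitary}(3) for the Laurent extension, are routine bookkeeping. The main obstacle is the defect-space analysis: one must show that $\Gamma_{d-1}$-contractivity of the tuple $(\gamma_iS_i)$ on the pure part descends to the fundamental operators $F_i$ strongly enough that the symbol values $F_{d-i}^*+F_i\zeta$ become commuting normal operators with joint spectrum (after scaling by the $\gamma_i$) inside $\Gamma_{d-1}$, so that the extended tuple genuinely lands in the class of $\Gamma_d$-unitaries. This is the point where the recursive occurrence of $\Gamma_{d-1}$ in the definitions must be used in an essential way, either by inducting on $d$ (the base case $d=2$ being the Agler--Young model theory of the symmetrized bidisk) or by invoking the model theory of $\Gamma_{d-1}$-contractions directly.
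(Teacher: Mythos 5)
A preliminary remark: the paper offers no proof of this statement at all — it is imported verbatim as Theorem 4.12 of \cite{SS} — so your argument can only be measured against the proof in that reference, and in fact your architecture is essentially the standard one there (Wold decomposition of $V$, commutant of the shift on the pure part, linear pencil symbols, extension to Laurent operators). Your direction $(1)\Rightarrow(2)$ is complete and correct: compressing $R_i^*$ to the jointly invariant subspace $\mathcal H$ gives $S_i^*Vh=P_{\mathcal H}R_i^*Uh=R_{d-i}h=S_{d-i}h$, and restricting the scaled tuple to an invariant subspace preserves the polynomial von Neumann inequality. In $(2)\Rightarrow(1)$, the verification that $\mathcal H_u$ reduces each $S_i$, the identification of the pure part with $H^2_{\mathcal E}(\mathbb D)$, the pencil form $\Phi_i(z)=F_{d-i}^*+F_iz$ forced by $M_{\Phi_{d-i}}=M_{\Phi_i}^*T_z$ (equality of block Toeplitz operators forces equality of symbols, so $\zeta\Phi_i(\zeta)^*$ must be analytic), and the passage to Laurent operators on $L^2_{\mathcal E}(\mathbb T)$ are all sound.

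The one step you leave open — which you call the main obstacle and propose to settle by ``defect-space analysis'', induction on $d$, or the model theory of $\Gamma_{d-1}$-contractions — is in fact soft, and your framing of it (prove that the values $\psi_i(\zeta)$ are normal with scaled joint spectrum in $\Gamma_{d-1}$ a.e.) asks for more than is needed. All that is required is that $(\gamma_1M_{\psi_1},\dots,\gamma_{d-1}M_{\psi_{d-1}})$ be a $\Gamma_{d-1}$-contraction; normality of the $M_{\psi_i}$ and the spectral condition then come for free from the implication $(3)\Rightarrow(1)$ of Theorem \ref{G-unitary}, which you are already invoking. And the $\Gamma_{d-1}$-contractivity transfers trivially from $\mathcal H_s$ to $L^2_{\mathcal E}(\mathbb T)$: for any polynomial $f$ in $d-1$ variables, $f(\gamma_1M_{\Phi_1},\dots,\gamma_{d-1}M_{\Phi_{d-1}})$ is the analytic multiplication operator with symbol $G(z)=f\big(\gamma_1\Phi_1(z),\dots,\gamma_{d-1}\Phi_{d-1}(z)\big)$ (well defined in any order since the pencils commute pointwise), and for an operator-valued polynomial symbol one has $\|M_G|_{H^2_{\mathcal E}(\mathbb D)}\|=\sup_{z\in\mathbb D}\|G(z)\|=\max_{\zeta\in\mathbb T}\|G(\zeta)\|=\|M_G\|_{\mathcal B(L^2_{\mathcal E}(\mathbb T))}$, the middle equality by the maximum principle. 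Hence $\|f(\gamma_1M_{\psi_1},\dots,\gamma_{d-1}M_{\psi_{d-1}})\|=\|f(\gamma_1S_1|_{\mathcal H_s},\dots,\gamma_{d-1}S_{d-1}|_{\mathcal H_s})\|\le\|f\|_{\infty,\Gamma_{d-1}}$ by hypothesis. With this observation your proof closes without fundamental operators, induction, or any finer model theory, and the direct sum with the $\Gamma_d$-unitary living on $\mathcal H_u$ finishes the argument exactly as you indicate.
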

The tuple $(T_{s_1},\dots,T_{s_{d-1}},T_p)$ of multiplication by coordinate functions on $H^2(\mathbb G_d)$ is a prototype example of a $\Gamma_d$-isometry. By Theorem \ref{T:BH}, Toeplitz operators on $H^2(\mathbb G)$ are precisely those that satisfy the Brown--Halmos relations
$$
T_{s_i}^*TT_p=TT_{s_{d-i}} \text{ and }T_p^*TT_p=T \text{ for each }i=1,2\dots,d-1.
$$
This motivates us to consider the following more general situation.
\begin{definition}
Given a $\Gamma_d$-isometry $\underline{S}=(S_1,\cdots,S_{d-1},V)$ acting on a Hilbert space $\cH$, a bounded operator $X$ on $\cH$ is called an {\em$\underline{S}$-Toeplitz operator} if it satisfies the Brown--Halmos relation with respect to the $\Gamma_d$-isometry $\underline{S}$, i.e.,
$$
{S_i}^*XV=XS_{d-i} \text{ and }V^*XV=X \text{ for each }i=1,2\dots,d-1.
$$
\end{definition}The following is the main result of this section.
\begin{thm}\label{the-iso-case}
Let $\underline{S}=(S_1,\dots,S_{d-1},V)$ be a $\Gamma_d$-isometry on a Hilbert space
$\mathcal H$. Then
\begin{enumerate}
\item[(1)]\textbf{Extension:} There exists a  $\Gamma_d$-unitary $\underline{R}=(R_1,\dots,R_{d-1},U)$ acting on
a Hilbert space $\mathcal K$ containing $\mathcal H$ such that $\underline{R}$ is the minimal extension of
$\underline{S}$. In fact,
\begin{align}\label{miniK}
\mathcal K=\overline{\operatorname{span}}\{U^{ m}h:h\in\mathcal H, \text{ and } m \in \mathbb Z\}.
\end{align}
    Moreover, any operator $X$ acting on $\mathcal H$ commutes with $\underline{S}$ if and only if
    $X$ has a unique norm preserving extension $Y$ acting on $\mathcal K$ commuting with $\underline{R}$;
\item[(2)]\textbf{Halmos dilation:} An operator $X$ is an $\underline{S}$-Toeplitz operator if and only if there exists a unique norm-preserving Halmos dilation of $X$ in $\{R_1,\dots,R_{d-1},U\}'$, i.e., there exists a unique operator $Y$ in $\{R_1,\dots,R_{d-1},U\}'$ such that
    \begin{align}\label{uniqueness}
    \|X\|=\|Y\|\text{ and }  X=P_\mathcal HY|_{\mathcal H};
    \end{align}
\item[(3)]\textbf{Extension of $C^*(\underline{R})$:} Let $\mathcal C^*(\underline{S})$ and $\mathcal C^*(\underline{R})$ denote the unital $\mathcal C^*$-algebras generated by
$\{S_1,\dots,S_{d-1},V\}$ and $\{R_1,\dots,R_{d-1},U\}$, respectively and $\mathcal I(\underline{S})$
denote the closed ideal of $\mathcal C^*(\underline{S})$ generated by all the commutators $XY-YX$ for
 $X,Y\in \mathcal C^*(\underline{S})\cap \mathcal T(\underline{S})$.
Then there exists a short exact sequence
    $$
    0\rightarrow\mathcal I(\underline{S})\hookrightarrow \mathcal C^*(\underline{S})\xrightarrow{\pi_0} \mathcal C^*(\underline{R})\rightarrow 0
    $$
    with a completely isometric cross section, where $\pi_0: \mathcal C^*(\underline{S})\to \mathcal C^*(\underline{R})$ is the canonical unital $*$-homomorphism which sends the generating set $\underline{S}$
    to the corresponding generating set $\underline{R}$, i.e., $\pi_0(V)=U$ and $\pi_0(S_i)=R_i$ for all $1\leq i \leq d-1$.
\end{enumerate}
\end{thm}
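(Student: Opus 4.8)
\emph{Part (1).} The plan is to produce $\underline{R}$ by restriction: start from \emph{any} $\Gamma_d$-unitary extension $\underline{R}'=(R_1',\dots,R_{d-1}',U')$ of $\underline{S}$ (one exists by the definition of a $\Gamma_d$-isometry) and restrict it to $\mathcal{K}=\overline{\operatorname{span}}\{U'^{m}h:h\in\mathcal{H},\ m\in\mathbb{Z}\}$. I would check that $\mathcal{K}$ reduces $\underline{R}'$: it is $U'$- and $U'^{*}$-invariant by construction, $R_i'(U'^{m}h)=U'^{m}R_i'h=U'^{m}S_ih\in\mathcal{K}$ because $R_i'$ commutes with $U'$ and $S_i\mathcal{H}\subseteq\mathcal{H}$, and $R_i'^{*}(U'^{m}h)=U'^{m}R_i'^{*}h=U'^{m-1}R_{d-i}'h\in\mathcal{K}$ using $R_i'^{*}=R_{d-i}'U'^{*}$ from Theorem \ref{G-unitary}(3) together with Fuglede--Putnam (so that $R_i'$ commutes with $U'^{*}$). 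Minimality is built in, and uniqueness of the minimal extension is the routine intertwiner $W(U_1^{m}h)=U_2^{m}h$. For the ``moreover'' clause, given $X$ commuting with $\underline{S}$ I would define $Y$ on the dense subspace $\operatorname{span}\{U^{m}h\}$ by $Y\big(\sum_j U^{m_j}h_j\big)=\sum_j U^{m_j}Xh_j$; choosing $M$ with $m_j+M\ge0$ and using $U|_{\mathcal{H}}=V$, $XV^{k}=V^{k}X$ gives $\|\sum_j U^{m_j}Xh_j\|^{2}=\|X\sum_j V^{m_j+M}h_j\|^{2}\le\|X\|^{2}\|\sum_j U^{m_j}h_j\|^{2}$, which makes $Y$ well defined with $\|Y\|\le\|X\|$; as $Y|_{\mathcal{H}}=X$ we get $\|Y\|=\|X\|$. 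Commutation of $Y$ with $U$ and each $R_i$ is immediate from $Y(U^{m}h)=U^{m}Xh$, $R_ih=S_ih$ and $XS_i=S_iX$. The converse and uniqueness are one line: if $Y$ extends $X$ and commutes with $\underline{R}$, then $XS_ih=YR_ih=R_iYh=S_iXh$ (similarly for $V$), and any such $Y$ is forced to be $U^{m}X$ on $U^{m}\mathcal{H}$.

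\emph{Part (2).} One direction is formal: if $Y\in\{R_1,\dots,R_{d-1},U\}'$ and $X=P_{\mathcal{H}}Y|_{\mathcal{H}}$, then for $h,h'\in\mathcal{H}$, using $U|_{\mathcal{H}}=V$, $R_i|_{\mathcal{H}}=S_i$, $U^{*}U=I$ and $R_i^{*}U=R_{d-i}$, one gets $\langle V^{*}XVh,h'\rangle=\langle Yh,h'\rangle=\langle Xh,h'\rangle$ and $\langle S_i^{*}XVh,h'\rangle=\langle R_{d-i}Yh,h'\rangle=\langle YR_{d-i}h,h'\rangle=\langle XS_{d-i}h,h'\rangle$, so $X$ is $\underline{S}$-Toeplitz. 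For the converse the plan is: given $\underline{S}$-Toeplitz $X$, let $\widetilde X=X\oplus0$ on $\mathcal{K}=\mathcal{H}\oplus\mathcal{H}^{\perp}$ and set $Y:=\operatorname{w\text{-}lim}_n U^{-n}\widetilde X U^{n}$. This limit exists because on the increasing dense union $\bigcup_{m\ge0}U^{-m}\mathcal{H}$ the sequence is eventually constant: for $k=U^{-m}h$, $k'=U^{-m}h'$ and $n\ge m$, $\langle U^{-n}\widetilde X U^{n}k,k'\rangle=\langle V^{*(n-m)}XV^{n-m}h,h'\rangle=\langle Xh,h'\rangle$ by $V^{*}XV=X$. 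The same computation yields $P_{\mathcal{H}}Y|_{\mathcal{H}}=X$ and $\|Y\|=\|X\|$, while $U^{-1}YU=Y$ gives $YU=UY$. The crux is that $Y$ commutes with each $R_i$: the defect $\langle(YR_i-R_iY)k,k'\rangle$ equals $\lim_n\langle V^{*(n-m)}[X,S_i]V^{n-m}h,h'\rangle$, and here the key algebraic identity is
\[
V^{*}[X,S_i]V=V^{*}XS_iV-V^{*}S_iXV=(V^{*}XV)S_i-(V^{*}S_i)XV=XS_i-S_{d-i}^{*}XV=XS_i-XS_i=0,
\]
using $S_iV=VS_i$, $V^{*}XV=X$, $V^{*}S_i=S_{d-i}^{*}$ (the adjoint of $S_{d-i}=S_i^{*}V$ from Theorem \ref{G-isometry}) and the second Brown--Halmos relation $S_{d-i}^{*}XV=XS_i$; hence $V^{*j}[X,S_i]V^{j}=0$ for $j\ge1$ and the limit is $0$. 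Uniqueness is immediate: any $Y'\in\{R_1,\dots,U\}'$ with $P_{\mathcal{H}}Y'|_{\mathcal{H}}=X$ satisfies $\langle Y'U^{-m}h,U^{-m}h'\rangle=\langle Y'h,h'\rangle=\langle Xh,h'\rangle$, forcing $Y'=Y$ on the dense union.

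\emph{Part (3).} Since $\mathcal{C}^{*}(\underline{R})$ is commutative it lies in $\{R_1,\dots,R_{d-1},U\}'$, so by part (2) the map $s\colon A\mapsto P_{\mathcal{H}}A|_{\mathcal{H}}$ carries $\mathcal{C}^{*}(\underline{R})$ into the $\underline{S}$-Toeplitz operators and is norm preserving; ampliating $\underline{S}$ to $\underline{S}\otimes I_{n}$ (whose minimal extension is $\underline{R}\otimes I_{n}$) and applying part (2) entrywise shows $s$ is completely isometric, with $s(R_i)=S_i$, $s(U)=V$. For the homomorphism $\pi_0$ and the ideal I would first observe that $V$, $V^{*}$ and each $S_i$ are themselves $\underline{S}$-Toeplitz and lie in $\mathcal{C}^{*}(\underline{S})$ (check the two relations via $S_i^{*}V=S_{d-i}$); hence $I-VV^{*}=-[V,V^{*}]\in\mathcal{I}(\underline{S})$, which forces $[V]$ to be unitary in $\mathcal{C}^{*}(\underline{S})/\mathcal{I}(\underline{S})$, $[S_{d-i}]=[S_i]^{*}[V]$, and (after chasing $S_i^{*}=S_{d-i}V^{*}+S_i^{*}(I-VV^{*})$ through the ideal) all generators $[S_i],[V]$ mutually commute and $*$-commute, so the quotient is a commutative $C^{*}$-algebra carrying a $\Gamma_d$-unitary tuple. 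Identifying this quotient with $\mathcal{C}^{*}(\underline{R})$ via $[S_i]\mapsto R_i$, $[V]\mapsto U$ gives $\pi_0$; that $\pi_0\circ s=\operatorname{id}$ follows because $s$ is multiplicative modulo $\mathcal{I}(\underline{S})$ (Toeplitz product defects being commutators of Toeplitz operators), and the completely isometric section $s$ then forces exactness.

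\emph{Main obstacle.} The step I expect to require genuine work is $\ker\pi_0=\mathcal{I}(\underline{S})$, equivalently the injectivity of $q\circ s\colon\mathcal{C}^{*}(\underline{R})\to\mathcal{C}^{*}(\underline{S})/\mathcal{I}(\underline{S})$: the inclusion $\mathcal{I}(\underline{S})\subseteq\ker\pi_0$ is easy, but for the reverse I would invoke the Wold-type decomposition $\underline{S}=\underline{S}_u\oplus\underline{S}_s$ of a $\Gamma_d$-isometry (from \cite{SS}), so that the $\Gamma_d$-unitary summand contributes nothing to either side, and then run a Coburn-type argument on the concrete model of a pure $\Gamma_d$-isometry $\underline{S}_s$, showing that the ``compact-like'' ideal of $\mathcal{C}^{*}(\underline{S}_s)$ is generated by $I-V_sV_s^{*}$, already sits inside $\mathcal{I}(\underline{S}_s)$, and has quotient $\mathcal{C}^{*}(\underline{R}_s)$ with $\sigma(\underline{R}_s)=b\Gamma_d$ (a joint-Fredholm/index computation); reassembling the unitary and pure parts then gives the short exact sequence.
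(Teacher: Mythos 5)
Your parts (1) and (2) are essentially correct, and they follow a genuinely different and more elementary route than the paper: you build the minimal $\Gamma_d$-unitary extension by cutting down an arbitrary $\Gamma_d$-unitary extension to the reducing subspace $\overline{\operatorname{span}}\{U^m\mathcal H\}$ and lift commutants/Toeplitz operators by hand (the commutant lift on $\operatorname{span}\{U^m h\}$, and the WOT-limit $Y=\operatorname{w\text{-}lim}U^{-n}\widetilde X U^n$ together with the identity $V^*[X,S_i]V=0$ coming from $S_{d-i}^*XV=XS_i$), whereas the paper runs everything through Prunaru's machinery: the completely positive map $\Phi_0(X)=V^*XV$, its idempotent Choi--Effros extension $\Phi$, and the minimal Stinespring dilation $(\Pi,\pi,\mathcal K)$ of $\Phi_\nu$, with $U=\pi(V)$, $R_j=\pi(S_j)$ and the complete isometry $\rho$. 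Your approach buys concreteness and avoids the CP-map formalism; the paper's approach buys, in one stroke, the representation $\pi$ and the cross-section $\rho$, which are exactly what part (3) consumes.

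The genuine gap is in part (3). First, you never actually construct the $*$-homomorphism $\pi_0$ with $\pi_0(S_i)=R_i$, $\pi_0(V)=U$: you propose to obtain it by ``identifying'' $\mathcal C^*(\underline S)/\mathcal I(\underline S)$ with $\mathcal C^*(\underline R)$, but that identification is equivalent to the two statements you still owe (existence of a generator-preserving homomorphism onto $\mathcal C^*(\underline R)$ and $\ker\pi_0=\mathcal I(\underline S)$); showing the quotient is commutative and carries a ``$\Gamma_d$-unitary-like'' tuple does not identify it with $\mathcal C^*(\underline R)$, since that would require matching joint spectra. Second, the crucial inclusion $\ker\pi_0\subseteq\mathcal I(\underline S)$ (equivalently, injectivity of the induced map on the quotient) is exactly where your argument stops: the Wold-decomposition/Coburn/index strategy you sketch is not carried out, and its key claims are unsupported and doubtful in this generality --- e.g.\ that for a pure $\Gamma_d$-isometry the ideal generated by $I-V_sV_s^*$ is ``compact-like,'' sits inside $\mathcal I(\underline S_s)$, and has quotient $\mathcal C^*(\underline R_s)$, or that $\sigma(\underline R_s)=b\Gamma_d$ (the Taylor spectrum of the minimal extension need not be all of $b\Gamma_d$). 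The paper closes precisely this gap without any model theory: $\pi_0$ is the restriction of the Stinespring representation $\pi$, one proves $\ker\pi=\ker\Phi_\nu$ using minimality and the Choi--Effros identities, so every element of $\ker\pi_0$ has the form $Z-\Phi_0(Z)$, and for words $Z$ in $\underline S,\underline S^*$ the Fuglede--Putnam rearrangement of $\Phi_0(Z)$ (adjoints to the left, non-adjoints to the right) lets a commutator manipulation place $Z-\Phi_0(Z)$ in $\mathcal I(\underline S)$; the completely isometric cross-section is then $\rho_0=\rho|_{\pi(\mathcal C^*(\underline S))}$. Your compression map $s(A)=P_{\mathcal H}A|_{\mathcal H}$ is a reasonable candidate for the cross-section (and your complete-isometry argument via ampliation is fine), but without the kernel identification the short exact sequence is not established, and even the multiplicativity of $s$ modulo $\mathcal I(\underline S)$ (``Toeplitz product defects are commutators'') is asserted rather than proved.
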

\begin{proof}
The main idea is to apply Theorem 1.2 of Prunaru \cite{Prunaru} to the isometry part $V$ of the $\Gamma_d$-isometry $\underline{S}=(S_1,\dots,S_{d-1},V)$  and then to progress with a similar line of analysis as in Prunaru's proof of his theorem for spherical isometries to conclude the proof of the theorem.

The starting point is the completely positive and unital map $\Phi_0:\cB(\cH)\to\cB(\cH)$ defined as
$$
\Phi_0:X\to V^*XV.
$$We note that $\Phi_0$ is a normal mapping, i.e., $\Phi_0(A_i)\to \Phi_0(A)$ in the strong operator topology for every net $\{A_i\}_{i\in I}$ of increasing operators that goes to $A$ in the strong operator topology. Thus, the map $\Phi_0$ satisfies all the hypotheses of Lemma 2.3 of \cite{Prunaru}, a simpler version of which states that {\em if $\Phi_0$ is a completely positive, unital and normal mapping on $\cB(\cH)$, then there exists an idempotent completely positive map $\Phi$ on $\cB(\cH)$ such that
$$
\cT(V):=\{X:\Phi_0(X)=X\}=\operatorname{Ran}\Phi.
$$}
Now using a well-known result of Choi--Effros \cite{CE}
for the idempotent unital completely positive map $\Phi$, one has
\begin{equation}
\label{ideal}
 \Phi(\Phi(X)Y)=\Phi(X\Phi(Y))=\Phi(\Phi(X)\Phi(Y))\quad
 (X,Y\in \mathcal B(\cH)).
\end{equation}
  Let us denote by $\Phi_\nu$ the restriction of $\Phi$ to $C^*(\cT(V))$, the $C^*$-algebra generated by $\cT(V)$.
Then it readily follows from \eqref{ideal} that $\operatorname{Ker}\Phi_\nu$ is an ideal.
  Let $(\Pi,\pi,\cK)$ denote the minimal Stinespring dilation of $\Phi_\nu$, i.e., $\Pi:\cH\to\cK$ is an isometry (since $\Phi_\nu$ is unital) and
\begin{align}\label{Stines}
\Phi_\nu(X)=\Pi^*\pi(X)\Pi \quad \text{ for all $X\in C^*(\cT(V))$}.
\end{align}As a consequence of \eqref{ideal} and the minimality of the Stinespring dilation $(\Pi,\pi,\cK)$, we get
$$
\operatorname{Ker}\Phi_\nu=\operatorname{Ker}\pi.
$$
Indeed, from \eqref{Stines} the inclusion $\operatorname{Ker}\Phi_\nu\supset\operatorname{Ker}\pi$ is clear, while for the other inclusion we note using \eqref{Stines} that for any $X,Y,Z$ in $C^*(\cT(V))$ and $h,k$ in $\cH$
\begin{align*}
\langle \Phi_\nu(X^*YZ)h,k\rangle=\langle  \Pi^*\pi(X^*YZ)\Pi h,k\rangle =\langle \pi(Y) \pi(Z)\Pi(h),\pi(X)\Pi(k) \rangle.
\end{align*}This implies that if $Y$ in $C^*(\cT(V))$ is such that $\Phi_\nu(Y)=0$, then since the kernel of $\Phi_\nu$ is an ideal, $\Phi_\nu(XYZ)=0$ and hence $\pi(Y)=0$ because by minimality of the dilation
$$
\overline{\operatorname{span}}\{\pi(X)\Pi h: X\in C^*(\cT(V)),\;h\in\cH\}=\cK.
$$
Let us now define the mapping $\rho:\pi(C^*(\cT(V)))\to\cB(\cH)$ as
\begin{align}\label{rho}
\rho(\pi(X))=\Pi^*\pi(X)\Pi=\Phi_\nu(X).
\end{align}Since $\Phi_\nu$ is idempotent, it follows that for every $X$ in $C^*(\cT(V))$, $\Phi_\nu$ kills $X-\Phi_\nu(X)$ and therefore so does $\pi$. Using this fact, one easily deduces that
$$
\pi\circ\rho=I_{\operatorname{Ran}\pi}
$$
and the mapping $\rho$ is a complete isometry.
We now note down two further crucial properties of the Stinespring dilation $(\Pi,\pi,\cK)$ which can be derived from what we discussed above. This can also be seen from the proof of Theorem 1.2 in \cite{Prunaru} when applied to the case of a single isometry $V$.
\begin{enumerate}
\item[($\textbf P_1$)] The operator
$$
U:=\pi(V),
$$ is the minimal unitary extension of $V$, i.e., $\cK$ is as in \eqref{miniK};
\item[\bf($\textbf P_2$)] if $X$ commutes with $V$, then $\pi(X)$ is the unique operator that commutes with $U$, $\|\pi(X)\|=\|X\|$ and $\pi(X)|_{\Pi(\cH)}=\Pi X\Pi^*|_{\Pi(\cH)}$;
\end{enumerate}{\em We shall identify $\cH$ with $\Pi \cH$ and therefore view $\cH$ as a subspace of $\cK$.} To prove item (1) of the theorem, we proceed by defining
$$
R_j=\pi(S_j)\quad \text{for each } j=1,2,\dots, d-1.
$$
Commutativity of the tuple $(R_1,\dots,R_{d-1},U)$ follows from that of $(S_1,\dots,S_{d-1},V)$. We shall use item (3) of Theorem \ref{G-unitary} to prove that $\underline{R}=(R_1,\dots,R_{d-1},U)$ is a $\Gamma_d$-unitary. By $({\bf{ P_1}})$ we know that $U$ is a unitary. We also see that for each $j=1,2,\dots,d-1$
$$R^*_j U =\pi(S_j^*) \pi(V)=\pi(S_j^*V)=\pi(S_{d-j})=R_{d-j},$$where we used item (2) of Theorem \ref{G-isometry}. Now to see that
$$(\frac{d-1}{d}R_1, \frac{d-2}{d}R_2, \ldots , \frac{1}{d}R_{d-1})$$
is a $\Gamma_{d-1}$-contraction, we choose any polynomial $f$ in $d-1$ variables, and compute
 \begin{align*} &\| f(\frac{d-1}{d}R_1, \frac{d-2}{d}R_2, \ldots , \frac{1}{d}R_{d-1})\| \\
 = &\| f(\frac{d-1}{d}\pi(S_1), \frac{d-2}{d}\pi(S_2), \ldots , \frac{1}{d}\pi(S_{d-1})) \| \\
 = &\| \pi ( f(\frac{d-1}{d}S_1, \frac{d-2}{d}S_2, \ldots , \frac{1}{d}S_{d-1}) )\| \quad[\mbox{because } \pi \mbox{ is multiplicative}]\\
 \le &\|  f(\frac{d-1}{d}S_1, \frac{d-2}{d}S_2, \ldots , \frac{1}{d}S_{d-1} )\| \quad[\mbox{because } \pi \mbox{ is contractive}].\end{align*}
 Now we use Lemma 2.7 of \cite{SS} which states that $(\frac{d-1}{d}S_1, \frac{d-2}{d}S_2, \ldots , \frac{1}{d}S_{d-1})$ is a $\Gamma_{d-1}$-contraction whenever $(S_1, \ldots ,S_{d-1}, P)$ is a $\Gamma_d$-contraction to conclude that the last line of the displayed equations above is dominated by the supremum of $|f|$ over the set $\Gamma_{d-1}$. Consequently, we have shown that $(\frac{d-1}{d}R_1, \frac{d-2}{d}R_2, \ldots , \frac{1}{d}R_{d-1})$ is a $\Gamma_{d-1}$-contraction.

The minimality of the extension $\underline{R}=(R_1,\dots,R_{d-1},U)$ follows from $({\bf{P_1}})$.

For the moreover part in item (1), let $X$ be an operator on $\mathcal H$ which commutes with $\underline{S}$. Set $Y:=\pi(X)$. By $({\bf{P_2}})$ above, we know that $Y$ is the unique operator that commutes with $U$ and is a norm preserving extension of $X$. Furthermore, $Y$ commutes with each of $R_j$ as $X$ commutes with each of $S_j$. Conversely, if $X$ in $\cB(\cH)$ has an extension that commutes with $(R_1,\dots,R_{d-1},U)$, then clearly $X$ would commute with $(S_1,\dots,S_{d-1},V)$ because $(R_1,\dots,R_{d-1},U)$ is an extension of $(S_1,\dots,S_{d-1},V)$. This completes the proof of part (1) of this theorem.

 For part (2), let $X$ be an $\underline{S}$-Toeplitz operator and as before set $Y:=\pi(X)$. By definition we have
 $$
{S_i}^*XV=XS_{d-i} \text{ and }V^*XV=X \text{ for each }i=1,2\dots,d-1,
$$applying $\pi$ on both sides of which we get
$$
{R_i}^*YU=YR_{d-i} \text{ and }U^*YU=Y \text{ for each }i=1,2\dots,d-1.
$$The last equation is equivalent to $YU=UY$, which, in view of the other equations,
further implies that $YR_j=R_jY$ for each $i=1,2\dots,d-1$. For the norm equality we simply observe that $X$ is the compression of $Y$ to $\cH$ and therefore  $\|X\|\leq \|Y\|=\|\pi(X)\|\leq \|X\|.$ For the uniqueness part we use $({\bf{P_3}})$. Suppose that there are two possibly distinct operators $Y_1$ and $Y_2$ that satisfy \eqref{uniqueness}. Then
$$
0=P_{\cH}(Y_1-Y_2)|_{\cH}=\rho(Y_1-Y_2).
$$Since $\rho$ is an isometry, $Y_1=Y_2$.

For the converse direction of (2), let us first note that {\em commutativity is much stronger than being a generalized Toeplitz operator, i.e., if $X$ commutes with $\underline{S}$, then $X$ trivially becomes an $\underline{S}$-Toeplitz operator.} Therefore if $Y$ commutes with $(R_1,\dots,R_{d-1},U)$, then in particular we have
\begin{align}\label{aux1}
R_j^* YU=YR_{d-j} \text{ and }U^*YU=U  \text{ for each }j=1,2\dots,d-1.
\end{align}Moreover we have the following $2\times2$ matrix structure of the concerned operators:
$$
R_j=\left[
                 \begin{array}{cc}
                   S_j & * \\
                   0 & * \\
                 \end{array}
               \right], U=\left[
                 \begin{array}{cc}
                   V & * \\
                   0 & * \\
                 \end{array}
               \right]\text{ and }Y=\left[
                 \begin{array}{cc}
                   X & * \\
                   * & * \\
                 \end{array}
               \right].
$$ Using these matrix structures and \eqref{aux1}, one readily obtains that $X$ is an $\underline{S}$-Toeplitz operator. This completes the proof of part (2).

 To prove part (3), we first note that the representation $\pi_0$ in the statement of the theorem is actually the restriction of $\pi$ to $\mathcal C^*(\underline{S})$ as the representation $\pi$
also maps the generating set $\underline{S}$ of $\mathcal C^*(\underline{S})$ to the generating set $\underline{R}$ of $\mathcal C^*(\underline{R})$.
Since $\pi_0(\underline{S})=\underline{R}$, range of $\pi_0$ is $\mathcal C^*(\underline{R})$.
Therefore to prove that the following sequence
$$
0\rightarrow\mathcal I(\underline{S})\hookrightarrow \mathcal C^*(\underline{S})\xrightarrow{\pi_0} \mathcal C^*(\underline{R})\rightarrow 0
$$
is a short exact sequence, all we need to show is that ker$\pi_0=\mathcal I(\underline{S})$.

We do that now. Since $\pi_0(\mathcal C^*(\underline{S}))$ is commutative, we have $XY-YX$ in the kernel of $\pi_0$, for any $X,Y\in \mathcal C^*(\underline{S})\cap \mathcal T(\underline{S})$.
Hence $\mathcal I(\underline{S})\subseteq$ ker$\pi_0$. To prove the other inclusion, consider a finite product $Z_1$ of members of $\underline{S}^*=(S_1^*,\dots,S_{d-1}^*,V^*)$ and a finite product $Z_2$ of members of $\underline{S}$. Let $Z=Z_1Z_2$.
Since $Z\in\mathcal T (\underline{S})\subseteq\mathcal T (V)$, we have $\Phi_0(Z)=Z$.
Note that $\Phi_0(Z)=P_{\mathcal H}\pi_0(Z)|_{\mathcal H}$,
for every $Z\in \mathcal C^*(\underline{S})$. Now let $Z$ be any arbitrary finite product of members from $\underline{S}$ and $\underline{S}^*$.
Since $\pi_0(\underline{S})=\underline{R}$, which is a family of normal operators, we obtain, by Fuglede-Putnam theorem that,
action of $\Phi_0$ on $Z$ has all the members from $\underline{S}^*$ at the left and all the members from $\underline{S}$ at the right.
It follows from $\text{ker}\pi=\text{ker}\Phi_0$ and $\Phi_0$ is idempotent that ker$\pi_0=\{Z-\Phi_0(Z):Z\in \mathcal C^*(\underline{S})\}$.
Also, because of the above action of $\Phi_0$, if
$Z$ is a finite product of elements from $\underline{S}$ and $\underline{S}^*$ then a simple commutator manipulation shows that $Z-\Phi_0(Z)$ belongs to the ideal generated by all the commutators $XY-YX$,
where $X,Y \in \mathcal C^*(\underline{S})\cap \mathcal T(\underline{S})$. This shows that ker$\pi_0=\mathcal I(\underline{S})$.

In order to find a completely isometric cross section, recall the completely isometric map $\rho:\pi(\mathcal C^*(\mathcal T( V)))\to \mathcal B(\mathcal H)$ defined by $Y\mapsto V^*YV$
such that $\pi\circ \rho=id_{\pi(\mathcal C^*(\mathcal T(V)))}$. Set $\rho_0:=\rho|_{\pi(\mathcal C^*(\underline{S}))}$.
Then by the definition of $\rho$ and the action of $\Phi_0$, it follows that $\rho_0(\pi(X))=V^*\pi(X)V=\Phi_0(X)\in \mathcal C^*(\underline{S})$ for all $X\in \mathcal C^*(\underline{S})$.
Thus $\text{Ran} \rho_0\subseteq \mathcal C^*(\underline{S})$ and therefore is a
completely isometric cross section.
This completes the proof of the theorem.
\end{proof}

A remark on a possible strengthening of Theorem \ref{the-iso-case} is in order.
\begin{remark}
One can work with a commuting family of $\Gamma_d$-isometries instead of just one and obtain similar results as in Thereom \ref{the-iso-case}. This was done in the pair case in \cite{B-D-S}.
\end{remark}

\section{Dual Toeplitz operators}\label{S:Dual}
Dual Toeplitz operators have been studied for a while on the Bergman space of the unit disc $\mathbb D$ in \cite{SZ2} and on the Hardy space of the Euclidean ball $\mathbb B_d$ in \cite{DE}. In our setting, consider the space
$$H^2(\mathbb G_d)^\perp = L^2(b\Gamma_d) \ominus H^2(\mathbb G_d).$$
Let $(I-Pr)$ be the projection of $L^2(b\Gamma_d)$ onto $H^2(\mathbb G_d)^\perp$. If $\varphi \in L^\infty (b\Gamma_d)$,
define the dual Toeplitz operator on $H^2(\mathbb G_d)^\perp$ by $ DT_\varphi = (I-Pr) M_\varphi|_{H^2(\mathbb G_d)^\perp}$. With respect to the decomposition above,
\begin{eqnarray}\label{matrix-of-M_phi} M_\varphi =
\left(
                 \begin{array}{cc}
                   T_\varphi & H_{\overline{\varphi}}^* \\
                   H_\varphi & DT_\varphi \\
                 \end{array}
               \right).\end{eqnarray}

\begin{lemma}
The $d$-tuple $\underline{D} = (DT_{\bar{s}_1},\dots,DT_{\bar{s}_{d-1}}, DT_\pbar)$ is a $\Gamma_d$-isometry with the $d$-tuple $(M_{\bar{s}_1},\dots,M_{\bar{s}_{d-1}}, M_\pbar)$ as its minimal $\Gamma_d$-unitary extension. \end{lemma}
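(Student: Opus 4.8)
The plan is to realize $\underline D$ as the restriction of an explicit $\Gamma_d$-unitary on $L^2(b\Gamma_d)$ to a joint invariant subspace, and then to identify minimality with the purity of $T_p$. First I would observe that $H^2(\mathbb G_d)^\perp$ is invariant under $M_{\bar s_1},\dots,M_{\bar s_{d-1}},M_{\bar p}$: since $s_1,\dots,s_{d-1},p\in H^\infty(\mathbb G_d)$, the space $H^2(\mathbb G_d)$ is invariant under $M_{s_1},\dots,M_{s_{d-1}},M_p$, and taking orthogonal complements (using $M_\varphi^*=M_{\bar\varphi}$ for $\varphi\in L^\infty(b\Gamma_d)$) gives the claim. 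In particular, reading off the matrix form \eqref{matrix-of-M_phi}, no compression is needed: $DT_{\bar s_i}=M_{\bar s_i}|_{H^2(\mathbb G_d)^\perp}$ for each $i$ and $DT_{\bar p}=M_{\bar p}|_{H^2(\mathbb G_d)^\perp}$.

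Next I would check that $\underline R:=(M_{\bar s_1},\dots,M_{\bar s_{d-1}},M_{\bar p})$ is a $\Gamma_d$-unitary on $L^2(b\Gamma_d)$. The operators are commuting and normal; since each $s_i$ has integer coefficients, $\overline{s_i(\bm z)}=s_i(\overline{\bm z})$ for $\bm z\in\mathbb T^d$, so the Taylor joint spectrum of $\underline R$ is $\bm s(\mathbb T^d)=b\Gamma_d$, the distinguished boundary of $\Gamma_d$; hence $\underline R$ is a $\Gamma_d$-unitary. (Equivalently, $\underline R$ is the adjoint of the $\Gamma_d$-unitary $(M_{s_1},\dots,M_{s_{d-1}},M_p)$ of coordinate multiplications, and $b\Gamma_d$ is stable under coordinatewise conjugation.) By the first paragraph, $\underline D$ is the restriction of the $\Gamma_d$-unitary $\underline R$ to the joint invariant subspace $H^2(\mathbb G_d)^\perp$, so $\underline D$ is a $\Gamma_d$-isometry and $\underline R$ is a $\Gamma_d$-unitary extension of it. If a direct verification is preferred, one instead checks the criterion of Theorem \ref{G-isometry}: $DT_{\bar p}$ is an isometry because $M_{\bar p}$ is unitary and $H^2(\mathbb G_d)^\perp$ is $M_{\bar p}$-invariant; the boundary identity $\overline{s_i}=\bar p\,s_{d-i}$ on $b\Gamma_d$ (as in Lemma \ref{L:GammaUni}) yields $DT_{\bar s_i}^{\,*}\,DT_{\bar p}=DT_{\bar s_{d-i}}$; and $(\gamma_1 DT_{\bar s_1},\dots,\gamma_{d-1}DT_{\bar s_{d-1}})$ is a $\Gamma_{d-1}$-contraction, being the restriction to an invariant subspace of the $\Gamma_{d-1}$-contraction $(\gamma_1 M_{\bar s_1},\dots,\gamma_{d-1}M_{\bar s_{d-1}})$ (Theorem \ref{G-unitary}(3)).

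For minimality, in view of \eqref{miniK} it remains to show $L^2(b\Gamma_d)=\overline{\operatorname{span}}\{M_{\bar p}^{\,m}f:f\in H^2(\mathbb G_d)^\perp,\ m\in\mathbb Z\}$. Let $g$ be orthogonal to this set. For $m=-k$ with $k\ge1$, since $M_{\bar p}^{-k}=M_p^{\,k}$, orthogonality of $g$ to $M_p^{\,k}f$ for all $f\in H^2(\mathbb G_d)^\perp$ forces $M_{\bar p}^{\,k}g\in H^2(\mathbb G_d)$, whence $g=M_p^{\,k}\bigl(M_{\bar p}^{\,k}g\bigr)\in\operatorname{Ran}T_p^{\,k}$. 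Thus $g\in\bigcap_{k\ge1}\operatorname{Ran}T_p^{\,k}=\{0\}$, because $T_p$ is unitarily equivalent to the unilateral shift on $H^2_{\mathcal E}(\mathbb D)$ (Lemma \ref{vector-valued-Hardy}) and hence is a pure isometry. So $g=0$, the span is all of $L^2(b\Gamma_d)$, and $\underline R$ is the minimal $\Gamma_d$-unitary extension of $\underline D$.

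The only mildly delicate points are the bookkeeping with conjugated symbols on the distinguished boundary (in particular the identity $\overline{s_i}=\bar p\,s_{d-i}$ there, which is the boundary counterpart of Lemma \ref{L:GammaUni}) and recognizing that minimality reduces exactly to the purity of $T_p$; beyond assembling the earlier results of the paper there is no real obstacle.
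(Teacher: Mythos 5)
Your proposal is correct and follows essentially the same route as the paper: realize $\underline{D}$ as the restriction of the $\Gamma_d$-unitary $(M_{\bar{s}_1},\dots,M_{\bar{s}_{d-1}},M_{\bar{p}})$ to the invariant subspace $H^2(\mathbb G_d)^\perp$, and reduce minimality to the fact that $M_{\bar{p}}$ is the minimal unitary extension of $DT_{\bar{p}}$. Your argument merely supplies the details the paper leaves implicit (invariance of $H^2(\mathbb G_d)^\perp$, $\Gamma_d$-unitarity of the conjugated tuple, and the minimality computation via purity of $T_p$), all of which are correct.
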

\begin{proof}
It is a $\Gamma_d$-isometry because it is the restriction of the $\Gamma_d$-unitary $(M_{\bar{s}_1},\dots,M_{\bar{s}_{d-1}}, M_\pbar)$ to the space $H^2(\mathbb G_d)^\perp$.
And this extension is minimal because $M_\pbar$ is the minimal unitary extension of $DT_\pbar$.
\end{proof}
\begin{thm} A bounded operator $T$ on $H^2(\mathbb G_d)^\perp$ is a dual Toeplitz operator if and only if it satisfies the Brown--Halmos relations with respect to $\underline{D}$. \end{thm}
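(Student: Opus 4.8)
The plan is to read off both implications from Theorem~\ref{the-iso-case} applied to the $\Gamma_d$-isometry $\underline{D}$. By the preceding lemma, $\underline{D}=(DT_{\bar{s}_1},\dots,DT_{\bar{s}_{d-1}},DT_{\bar{p}})$ is a $\Gamma_d$-isometry on $H^2(\mathbb G_d)^\perp$ whose minimal $\Gamma_d$-unitary extension is $\underline{M}:=(M_{\bar{s}_1},\dots,M_{\bar{s}_{d-1}},M_{\bar{p}})$ on $L^2(b\Gamma_d)$; since $M_{\bar{p}}$ is the minimal unitary extension of the isometry $DT_{\bar{p}}$, the space $L^2(b\Gamma_d)$ equals $\overline{\operatorname{span}}\{M_{\bar{p}}^{\,m}h:h\in H^2(\mathbb G_d)^\perp,\ m\in\mathbb Z\}$, which is exactly the form~\eqref{miniK} of the extension constructed in part~(1) of Theorem~\ref{the-iso-case}. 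Hence part~(2) of that theorem is available with $\underline{R}=\underline{M}$ and $\mathcal H=H^2(\mathbb G_d)^\perp$.

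For the ``only if'' direction I would verify the Brown--Halmos relations for $DT_\varphi$ directly, just as in the easy half of Theorem~\ref{T:BH}. Write $Q=I-Pr$ for the projection of $L^2(b\Gamma_d)$ onto $H^2(\mathbb G_d)^\perp$. Since $s_i$ and $p$ are analytic on $\mathbb G_d$, the multipliers $M_{s_i},M_p$ leave $H^2(\mathbb G_d)$ invariant, hence $M_{\bar{s}_i}=M_{s_i}^*$ and $M_{\bar{p}}=M_p^*$ leave $H^2(\mathbb G_d)^\perp$ invariant; equivalently $QM_{s_i}Q=QM_{s_i}$, $QM_pQ=QM_p$, $QM_{\bar{s}_i}Q=M_{\bar{s}_i}Q$ and $QM_{\bar{p}}Q=M_{\bar{p}}Q$. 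Using that $M_\varphi$ commutes with each $M_{\bar{s}_i}$ and with $M_{\bar{p}}$, together with the boundary identities $|p|=1$ and $\bar{s}_{d-i}=s_i\,\bar{p}$, valid $\nu$-a.e.\ on $b\Gamma_d$ (cf.\ Lemma~\ref{L:GammaUni}), one computes
\begin{align*}
DT_{\bar{p}}^{*}\,DT_\varphi\, DT_{\bar{p}}
&= QM_pM_\varphi M_{\bar{p}}Q
= QM_\varphi\bigl(M_pM_{\bar{p}}\bigr)Q\\
&= QM_\varphi Q
= DT_\varphi,
\end{align*}
where the first equality absorbs the intermediate projections by means of the invariance relations above. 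An entirely analogous computation, using $M_{s_i}M_{\bar{p}}=M_{\bar{s}_{d-i}}$, gives $DT_{\bar{s}_i}^{*}\,DT_\varphi\, DT_{\bar{p}}=DT_\varphi\, DT_{\bar{s}_{d-i}}$. Hence $DT_\varphi$ is a $\underline{D}$-Toeplitz operator.

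For the converse, suppose $T$ satisfies the Brown--Halmos relations with respect to $\underline{D}$, i.e.\ $T$ is a $\underline{D}$-Toeplitz operator. By part~(2) of Theorem~\ref{the-iso-case} there is an operator $Y\in\{M_{\bar{s}_1},\dots,M_{\bar{s}_{d-1}},M_{\bar{p}}\}'$ with $\|Y\|=\|T\|$ and $T=P_{H^2(\mathbb G_d)^\perp}\,Y|_{H^2(\mathbb G_d)^\perp}$. The operators $M_{\bar{s}_1},\dots,M_{\bar{s}_{d-1}},M_{\bar{p}}$ form a commuting family of normal operators, so by the Fuglede--Putnam theorem $Y$ commutes also with their adjoints $M_{s_1},\dots,M_{s_{d-1}},M_p$; the lemma asserting that any bounded operator on $L^2(b\Gamma_d)$ commuting with $(M_{s_1},\dots,M_p)$ is a Laurent operator then forces $Y=M_\varphi$ for some $\varphi\in L^\infty(b\Gamma_d)$. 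Therefore $T=(I-Pr)M_\varphi|_{H^2(\mathbb G_d)^\perp}=DT_\varphi$ is a dual Toeplitz operator.

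The only substantial ingredient here is Theorem~\ref{the-iso-case}(2); once it is in hand, the remaining points — that $\underline{M}$ is the abstract minimal $\Gamma_d$-unitary extension of $\underline{D}$, that its commutant is exactly the algebra of Laurent operators, and the bookkeeping of the intermediate projections in the ``only if'' step — are routine, so I expect no genuine obstacle. Were one to insist on a proof independent of Theorem~\ref{the-iso-case}, the real difficulty would be to rerun the weak-limit/averaging construction of the symbol (as in the sufficiency half of Theorem~\ref{T:BH}), now with the backward shift $M_{\bar{p}}$ on $H^2(\mathbb G_d)^\perp$ in place of $T_p$ on $H^2(\mathbb G_d)$.
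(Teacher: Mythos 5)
Your proposal is correct and follows essentially the same route as the paper: the forward direction is the same direct verification via the identities $M_{\bar s_i}^*M_\varphi M_{\bar p}=M_\varphi M_{\bar s_{d-i}}$, $M_{\bar p}^*M_\varphi M_{\bar p}=M_\varphi$ together with the invariance of $H^2(\mathbb G_d)^\perp$ (the paper phrases this through the $2\times 2$ matrix of $M_\varphi$), and the converse invokes the lemma identifying $(M_{\bar s_1},\dots,M_{\bar p})$ as the minimal $\Gamma_d$-unitary extension of $\underline{D}$, part (2) of Theorem \ref{the-iso-case}, and the fact that the commutant of the coordinate multipliers on $L^2(b\Gamma_d)$ consists of Laurent operators. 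Your extra details (absorbing the projections, the Fuglede--Putnam step) only make explicit what the paper leaves implicit.
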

\begin{proof}
The easier part is showing that every dual Toeplitz operator on $H^2(\mathbb G_d)^\perp$ satisfies the Brown--Halmos relations with respect to $\underline{D}$. It follows from the following identities
$${M_{\bar{s}_i}}^*M_\varphi M_\pbar=M_\varphi M_{\bar{s}_{d-i}} \text{ and }{M_\pbar}^*M_\varphi M_\pbar=M_\varphi \text{ for every }\varphi \in L^\infty (b\Gamma_d)$$ and from the $2\times 2$ matrix representations of the operators in concern. For the converse, let $T$ on $H^2(\mathbb G_d)^\perp$ satisfy the Brown--Halmos relations with respect to the $\Gamma_d$-isometry $\underline{D}$. By part (2) of Theorem \ref{the-iso-case} and the fact that any operator that commutes with each of $M_{s_1},\dots,M_{s_{d-1}}$ and $M_p$ is of the form $M_\varphi$, for some $\varphi\in L^\infty(b\Gamma_d)$,
there is a $\varphi \in L^\infty (b\Gamma_d)$ such that $T$ is the compression of $M_{\varphi}$ to $H^2(\mathbb G_d)^\perp$.
\end{proof}

%

\vspace{0.1in} \noindent\textbf{Acknowledgement:}
 The research works of the first and second named authors are supported by DST-INSPIRE Faculty Fellowships DST/INSPIRE/04/2015/001094 and DST/INSPIRE/04/2018/002458 respectively. The second named author thanks Indian Institute of Technology, Bombay for a post-doctoral fellowship under which most of this work was done.


\begin{thebibliography}{99}

\bibitem{ALY-MAMS19} J. Agler, Z. Lykova and N. J. Young, {\em Geodesics, retracts, and the norm-preserving extension property in the symmetrized bidisc}, Mem. Amer. Math. Soc. 258 (2019), no. 1242, vii+108 pp. ISBN: 978-1-4704-3549-3.

%
%
%

\bibitem{AY_JGA04} J. Agler and N. J. Young, {\em{The hyperbolic geometry of the symmetrized bidisc}}, J. Geom. Anal. 14
(2004), 375-403.


\bibitem{AY-JOT03} J. Agler and N. J. Young, {\em{A model theory for $\Gamma$-contractions}}, J. Operator Theory 49 (2003), 45-60.

\bibitem{AC}P. Aherna and  Z. Cuckovic, {\em A Theorem of
Brown-Halmos Type for Bergman Space Toeplitz Operators},
J. Funct. Anal. 187 (2001), 200-210.



\bibitem{AW} H. Alexander and J. Wermer, {\em Several complex variables and Banach algebras}, Springer, New York, 1998.

\bibitem{A}S. Axler, {\em{Toeplitz Operators. A glimpse at Hilbert space operators,}} Oper. Theory Adv. Appl. 207 (2010), 125-133.

\bibitem{ACM}S. Axler, J. B. Conway and G. McDonald, {\em Toeplitz operators on Bergman spaces}, Canadian J. Math. 34 (1982), 466-483.


\bibitem{B-D-S} T. Bhattacharyya, B. K. Das and H. Sau, {\em Toeplitz operators on the symmetrized bidisc}, Int. Math. Res. Not. IMRN (to appear), DOI:10.1093/imrn/rnz333, available at arXiv:1706.03463.



\bibitem{BS-JFA18} T. Bhattacharyya and H. Sau, {\em{Holomorphic functions on the symmetrized bidisk - realization, interpolation and Extension}}, J. Funct. Anal. 274 (2018), no. 2, 504-524.


\bibitem{SS} S. Biswas and S. Shyam Roy, {\em Functional models of $\Gamma_d$-contractions and characterizations of $\Gamma_d$-isometries}, {J. Funct. Anal.} 266 (2014), 6224-6255.

\bibitem{BS} A. B\"ottcher, and B. Silbermann, {\em{Analysis of Toeplitz operators}}, Springer, Berlin, 1990.

\bibitem{BH} A. Brown and P. R. Halmos, {\em{Algebraic properties of Toeplitz operators}}, J. Reine Angew. Math. 213 (1963), 89-102.

\bibitem{CYU} G. Cho and Y. Yuan, {\em Bergman metric on the symmetrized bidisc and its consequences}, arXiv:2004.04637. 

\bibitem{CE} M. D. Choi and E. G. Effros, {\em {Injectivity and operator spaces}}, J. Funct. Anal. 24 (1977), 156-209.




\bibitem{DE} M. Didas and J. Eschmeier, {\em{Dual Toeplitz operators on the sphere via sperical isometries}}, Integral Equations and Operator Theory 83 (2015), 291-300.


\bibitem{DP} G. E.~Dullerud and F.~Paganini, {\em A Course in Robust Control
  Theory: A Convex Approach}, Texts in Applied Mathematics Vol.~{\bf 36},
  Springer-Verlag, New York, 2000.

\bibitem{EZ}A. Edigarian and W. Zwonek, {\em Geometry of the symmetrized polydisc}, Arch.\ Math.\ (Basel) 84 (2005), 364-374.

\bibitem{Feintuch}A. Feintuch, {\em{On asymptotic Toeplitz and Hankel operators}}, In
The Gohberg anniversary collection, Vol.II (Calgary, AB, 1988), volume 41 of Oper. Theory Adv. Appl., pages 241-254. Birkhauser, Basel, 1989.

\bibitem{GS} S. Gorai and J. Sarkar, {\em Characterizations of symmetrized polydisc}, Indian J.\ Pure Appl.\ Math.\  47 (2016), 391-397.

\bibitem{Jewell}N. P. Jewell, {\em Toeplitz operators on the Bergman spaces and in several complex variables}, Proc. London Math. Soc. 41 (1980), 193-216.




\bibitem{MSS} A. Maji, J. Sarkar and S. Sarkar, {\em{Toeplitz and asymptotic Toeplitz operators on $H^2(\mathbb D^d)$}}, Bull. Sci. Math. 146 (2018), 33-49.


\bibitem{MSRZ} G. Misra, S. Shyam Roy and G. Zhang, {\em{Reproducing kernel for a class of weighted Bergman spaces
on the symmetrized polydisc}}, Proc. Amer. Math. Soc. 141 (2013), 2361-2370.

\bibitem{Niko} N. Nikolov, {\em The symmetrized polydisc cannot be exhausted by domains biholomorphic to convex domains}, Ann.\ Polon.\ Math.\ 88 (2006), 279-283.

\bibitem{NPTZ}N. Nikolov, P. Pflug, P. J. Thomas and W. Zwonek, {\em Estimates of the Carath\'eodory metric on the symmetrized polydisc}, J. Math.\ Anal.\ Appl. 341 (2008), 140-148.

\bibitem{NPZ} N. Nikolov, P. Pflug and W. Zwonek, {\em The Lempert function of the symmetrized polydisc in higher dimensions is not a distance}, Proc.\ Amer.\ Math.\ Soc.\ 135 (2007), 2921-2928.

\bibitem{NTT}N. Nikolov, P. J. Thomas and D. Tran, {\em Lifting maps from the symmetrized polydisc in small dimensions}, Complex Anal.\ Oper.\ Theory 10 (2016), 921-941.

\bibitem{DJO} D. J. Ogle, {\em Operator and Function Theory of the Symmetrized Polydisc}, PhD Thesis, University of Newcastle upon Tyne, October 1999. 

\bibitem{Prunaru} B. Prunaru, {\em{Some exact sequences for Toeplitz algebras of spherical isometries}}, Proc. Amer. Math. Soc. 135 (2007), 3621-3630.

\bibitem{SZ1}K. Stroethoff and D. C. Zheng, {\em Toeplitz and Hankel operators on Bergman spaces}, Trans. Amer. Math. Soc. 329 (1992), 773-794.

\bibitem{SZ2}  K. Stroethoff and D. Zheng, {\em{Algebraic and spectral properties of dual Toeplitz operators}}, Trans. Amer. Math. Soc. 354 (2002), no. 6, 2495-2520.

\bibitem{TTZ}P. J. Thomas, N.V. Trao and W. Zwonek, {\em Green functions of the spectral ball and symmetrized polydisk}, J. Math. Anal. Appl. 377 (2011), 624-630.

\bibitem{TRY}
M. Trybula, {\em Invariant metrics on the symmetrized bidisc}, Complex Var. Elliptic Equ. 60 (2015), 559-565. 



%













\bibitem{Vasi}N. L. Vasilevski, {\em Commutative algebras of Toeplitz operators on the Bergman space}, Operator Theory: Advances and Applications, 185.
Birkh\"auser Verlag, Basel, 2008. xxx+417 pp.

\bibitem{Vuko}D. Vukoti\'c, {\em Analytic Toeplitz operators on the Hardy space $H^p$: a survey}, Bull. Belg. Math. Soc. Simon Stevin 10 (2003), no. 1, 101-113.

\end{thebibliography}
\end{document}